\DeclareMathAlphabet{\mathpzc}{OT1}{pzc}{m}{it}
\def\bb{\mathbb}
\def\cal{\mathcal}
\def\d{{\mathrm d}}
\def\F2{\mathbb F_2}
\def\A2{{\mathcal A}_2}
\def\Z{{\mathbb Z}}
\def\C{{\mathbb C}}
\def\R{{\mathbb R}}
\def\SC{{\cal{SC}}}
\def\LP{{\cal{LP}}}
\def\OC{{\cal{OC}}}
\def\Lie{{\cal{L}ie}}
\def\Com{{\cal{C}om}}
\def\SCvor{{\cal{SC}}^{\rm vor}}
\def\Hoch{{\rm Hoch}}
\def\Sk{\mathbf{Sk}}
\def\dgvs{{\bf dgvs}}
\def\Der{{\rm Der}}
\def\dim{{\rm dim}}
\def\Fin{{\bf Fin}}
\def\Maps{{\rm Maps}}
\def\Coder{{\rm Coder}}
\def\Maps{{\rm Hom}}
\def\Hom{{\rm Hom}}
\def\hom{{\rm hom}}
\def\End{{\rm End}}
\def\sgn{{\rm sgn}}
\def\ide{{\rm id}}
\def\cl{\mathpzc c}
\def\op{\mathpzc o}
\def\kfield{\mathbf k}
\newcommand{\ac}{\scriptstyle \text{\rm !`}}
\newcommand{\epi}{\twoheadrightarrow}
\newcommand{\wiggly}{\xymatrix@1@C=15pt{  && \ar@{~}[ll]}}
\newcommand{\straight}{\xymatrix@1@C=30pt{  & \ar@{-}[l]}}
\newcommand{\whistle}[2]{\objectmargin={0pt}
\xymatrix@R=18pt{ 
  #1  \\ *-<5pt>{\bullet}\ar@{~}[u]\ar@{#2}[d] \\  \ 
}}
\newcommand{\lie}[3]{\objectmargin={0pt}
\xymatrix@R=16pt{ 
  {}\save[]+<-14pt,4pt>*{#1} +<30pt,0pt>*{#2}  \restore                  \\     
  *-<4pt>{\bullet}\ar@{~}[u]!<-15pt,0pt>\ar@{~}[u]!<15pt,0pt>\ar@{#3}[d] \\  \  
}}
\newcommand{\comm}[3]{\objectmargin={0pt}
\xymatrix@R=16pt{ 
  {}\save[]+<-14pt,4pt>*{#1} +<30pt,0pt>*{#2}  \restore                  \\     
  *-<2.5pt>{\circ}\ar@{~}[u]!<-15pt,0pt>\ar@{~}[u]!<15pt,0pt>\ar@{#3}[d] \\  \  
}}
\newcommand{\ass}[3]{\objectmargin={0pt}
\xymatrix@R=16pt{ 
  {}\save[]+<-15pt,4pt>*{#1} +<30pt,0pt>*{#2}  \restore                  \\     
  *-<4pt>{\bullet}\ar@{-}[u]!<-15pt,0pt>\ar@{-}[u]!<15pt,0pt>\ar@{#3}[d] \\  \  
}}
\newcommand{\act}[3]{\objectmargin={0pt}
\xymatrix@R=16pt{ 
  {}\save[]+<-14pt,4pt>*{#1} +<30pt,0pt>*{#2}  \restore                  \\     
  *-<4pt>{\bullet}\ar@{~}[u]!<-15pt,0pt>\ar@{-}[u]!<15pt,0pt>\ar@{#3}[d] \\  \  
}}
\newcommand{\ntwoone}{
\objectmargin={0pt}
\xymatrix@R=16pt{
 {}\save[]+<-14pt,4pt>*{\txt{\tiny 1}} +<14pt,0pt>*{\txt{\tiny 2}} +<14pt,0pt>*{\txt{\tiny 1}} \restore \\
  *[o]-<5pt>{\bullet}\ar@{~}[u]!<-15pt,0pt>\ar@{~}[u]\ar@{-}[u]!<15pt,0pt>\ar@{-}[d]!<0pt,5pt>         \\ \  
}}
\newcommand{\ntwoonedisplay}{
\objectmargin={0pt}
\xymatrix{
 {}\save[]+<-19pt,4pt>*{\txt{\tiny 1}} +<19pt,0pt>*{\txt{\tiny 2}} +<19pt,0pt>*{\txt{\tiny 1}} \restore \\
  *[o]-<5pt>{\bullet}\ar@{~}[u]!<-20pt,0pt>\ar@{~}[u]\ar@{-}[u]!<20pt,0pt>\ar@{-}[d]!<0pt,5pt>         \\ \  
}}
\newcommand{\closedcorolla}[2]{
\objectmargin={0pt}
\xymatrix@R=16pt{
 {}\save[]+<0pt,8pt>*{#1} +<4pt,-12pt>*{...} \restore       \\
 *[o]-<5pt>{\bullet}\ar@{~}[u]!<-5pt,0pt>\ar@{~}[u]!<-15pt,0pt>
                                                \ar@{~}[u]!<15pt,0pt>\ar@{#2}[d]   \\ \ 
}}
\newcommand{\Closedcorolla}[2]{
\objectmargin={0pt}
\xymatrix{
 #1  \\
 *[o]-<5pt>{\bullet}\ar@{~}[u]!<-10pt,0pt>\ar@{~}[u]!<-30pt,0pt>
                               \ar@{~}[u]!<30pt,0pt>\ar@{#2}[d]!<0pt,8pt>   \\ \ 
}}
\newcommand{\rootedoccorolla}[4]{
\objectmargin={0pt}
\xymatrix{
 {}\save[]+<-16pt,8pt>*{\overbrace{\hspace{30pt}}^{#1}} +<6pt,-10pt>*{...}  
 +<15pt,6pt>*{#2} +<8pt,0pt>*{#3} +<18pt,0pt>*{#4} +<-13pt,-7pt>*{...} \restore \\
  *[o]-<5pt>{\bullet}\ar@{~}[u]!<-30pt,0pt>\ar@{~}[u]!<-20pt,0pt>\ar@{~}[u]!<-5pt,0pt>     
   \ar@{-}[u]!<5pt,0pt>\ar@{-}[u]!<12pt,0pt>\ar@{-}[u]!<30pt,0pt>\ar@{-}[d]!<0pt,10pt> \\ \  
}}
\newcommand{\Rootedoccorolla}[6]{
\objectmargin={0pt}
\xymatrix{
 {}\save[]+<-30pt,4pt>*{#1} +<12pt,0pt>*{#2} +<8pt,-8pt>*{...} +<6pt,8pt>*{#3} 
 +<9pt,0pt>*{#4} +<8pt,0pt>*{#5} +<18pt,0pt>*{#6} +<-13pt,-7pt>*{...} \restore \\
  *[o]-<5pt>{\bullet}\ar@{~}[u]!<-30pt,0pt>\ar@{~}[u]!<-20pt,0pt>\ar@{~}[u]!<-5pt,0pt>     
   \ar@{-}[u]!<5pt,0pt>\ar@{-}[u]!<12pt,0pt>\ar@{-}[u]!<30pt,0pt>\ar@{-}[d]!<0pt,10pt> \\ \  
}}
\newcommand{\openedge}[7]{
\objectmargin={0pt}
\xymatrix{
 {}\save[]+<-22pt,8pt>*{\overbrace{\hspace{38pt}}^{#1}}   
  +<27pt,-4pt>*{#2}+<14pt,0pt>*{#3}+<12pt,0pt>*{#4}+<14pt,0pt>*{#5}+<13pt,-1pt>*{#6}
  +<-70pt,-5pt>*{...} +<23pt,0pt>*{...} +<33pt,0pt>*{...} \restore \\
  *[o]-<5pt>{\bullet}\ar@{~}[u]!<-40pt,0pt>\ar@{~}[u]!<-25pt,0pt>\ar@{~}[u]!<-5pt,0pt>     
   \ar@{-}[u]!<5pt,0pt>\ar@{-}[u]!<20pt,0pt>\ar@{-}[u]!<30pt,0pt>\ar@{-}[u]!<40pt,0pt>\ar@{-}[u]!<55pt,0pt>
   \ar@{#7}[d]!<0pt,10pt> \\ \  
}}
\newcommand{\occorolla}[3]{
\objectmargin={0pt}
\xymatrix{
 {}\save[]+<-12pt,6pt>*{\overbrace{}^{#1}} +<2pt,-10pt>*{...}  
 +<16pt,8pt>*{#2} +<15pt,0pt>*{#3} +<-10pt,-8pt>*{...} \restore \\
  *[o]-<5pt>{\bullet}\ar@{~}[u]!<-20pt,0pt>\ar@{~}[u]!<-5pt,0pt>
                     \ar@{-}[u]!<5pt,0pt>\ar@{-}[u]!<20pt,-1pt> 
}}
\newcommand{\oconeedge}[2]{
\objectmargin={0pt}
\xymatrix{
 {}\save[]+<-16pt,8pt>*{\overbrace{\hspace{30pt}}^{#1}} +<6pt,-10pt>*{...}  
 +<28pt,6pt>*\txt{\tiny{1} \tiny{2} $\cdots$ \tiny{\it #2}} \restore \\
  *[o]-<5pt>{\bullet}\ar@{~}[u]!<-30pt,0pt>\ar@{~}[u]!<-20pt,0pt>\ar@{~}[u]!<-5pt,0pt>     
                    \ar@{-}[u]!<5pt,0pt>\ar@{-}[u]!<12pt,0pt>\ar@{-}[u]!<30pt,0pt> 
}}
\newcommand{\bracedunshuffle}[2]{
\objectmargin={0pt}
\xymatrix@R=18pt@C=16pt{
 {}\save[]+<0pt,6pt>*{\bracedcorolla{#1}}                               \restore & 
 {}\save[]+<4pt,6pt>*{\overbrace{\hspace{20pt}}^{#2}} +<0pt,-8pt>*{...} \restore          \\
 & *[o]-<5pt>{\bullet}\ar@{~}[ul]+0\ar@{~}[u]!<-6pt,0pt>\ar@{~}[u]!<12pt,0pt>\ar@{~}[d] \\
 &
}}
\newcommand{\ocunshufled}[4]{
\objectmargin={0pt}
\xymatrix@R=14pt{
 {}\save[]+<0pt,6pt>*{\overbrace{\hspace{25pt}}^{#1}} +<4pt,-8pt>*{...} \restore     &  
 1 & 2 {\cdots} & #3                                                                       \\
 & *[o]-<5pt>{\bullet}\ar@{~}[u]!<-5pt,0pt>\ar@{~}[u]!<-12pt,0pt>\ar@{~}[u]!<12pt,0pt> 
  \ar@{-}[u]\ar@{-}[ur]\ar@{-}[urr] &   
 {}\save[]+<0pt,6pt>*{\overbrace{\hspace{25pt}}^{#2}} +<4pt,-8pt>*{...} \restore     &  #4   \\
 & & *[o]-<5pt>{\bullet}\ar@{~}[ul]\ar@{~}[u]!<-12pt,0pt>\ar@{~}[u]!<-5pt,0pt>
                                 \ar@{~}[u]!<12pt,0pt>\ar@{-}+0 &                          \\ 
 & & &  
}}
\newcommand{\straightunshufled}[2]{
\objectmargin={0pt}
\xymatrix@R=18pt{
 {}\save[]+<0pt,8pt>*{\overbrace{\hspace{28pt}}^{#1}} +<4pt,-10pt>*{...} \restore     &  \\
 *[o]-<5pt>{\bullet}\ar@{-}[u]!<-5pt,0pt>\ar@{-}[u]!<-12pt,0pt>\ar@{-}[u]!<12pt,0pt> &  
 {}\save[]+<0pt,8pt>*{\overbrace{\hspace{28pt}}^{#2}} +<4pt,-10pt>*{...} \restore    &  \\
 & *[o]-<5pt>{\bullet}\ar@{-}[ul]\ar@{-}[u]!<-12pt,0pt>\ar@{-}[u]!<-5pt,0pt>\ar@{-}[u]!<12pt,0pt> 
                        \ar@{-}[d]!<0pt,8pt> \\ & 
}}
\newcommand{\whistleback}{\objectmargin={0pt}
\xymatrix@R=18pt@C=0pt{ 
 &  *+<1pt>\txt{\tiny{\emph 1}}  &  \\ & *-<5pt>{\bullet}\ar@{~}[u]!<0pt,10pt> \ar@{-}[d]!<0pt,5pt> & \\ & & 
 } 
}
\newcommand{\jacobi}{
\raisebox{1em}{\makebox{\objectmargin={0pt}
$\xymatrix@R=5pt@C=5pt{ 
\hskip 1ex \makebox{\tiny\emph{1}} & &\, \makebox{\tiny\emph{2}}\hskip 2ex  &\, \makebox{\tiny\emph{3}} \hskip 2ex   \\ 
                  & \bullet\ar@{.}[ul]\ar@{.}[ur] &                                                              &              \\ 
                  &                               &  \bullet\ar@{.}[ul]\ar@{.}[uur]\ar@{.}[d]                    &              \\
                  &                               &                                                              &  
}$}} $+$ 
\raisebox{1em}{\makebox{\objectmargin={0pt}
$\xymatrix@R=5pt@C=5pt{ 
\hskip 1ex \makebox{\tiny\emph{2}} & &\, \makebox{\tiny\emph{3}}\hskip 2ex  &\, \makebox{\tiny\emph{1}} \hskip 2ex   \\ 
                  & \bullet\ar@{.}[ul]\ar@{.}[ur] &                                                              &              \\ 
                  &                               &  \bullet\ar@{.}[ul]\ar@{.}[uur]\ar@{.}[d]                    &              \\
                  &                               &                                                              &  
}$}} $+$
\raisebox{1em}{\makebox{\objectmargin={0pt}
$\xymatrix@R=5pt@C=5pt{ 
\hskip 1ex \makebox{\tiny\emph{3}} & &\, \makebox{\tiny\emph{1}}\hskip 2ex  &\, \makebox{\tiny\emph{2}} \hskip 2ex   \\ 
                  & \bullet\ar@{.}[ul]\ar@{.}[ur] &                                                              &              \\ 
                  &                               &  \bullet\ar@{.}[ul]\ar@{.}[uur]\ar@{.}[d]                    &              \\
                  &                               &                                                              &  
}$}}
}
\newcommand{\essaycircle}[2]{
\raisebox{1em}{\makebox{
\xymatrix@R=14pt@C=8pt{ 
 *-<20pt>\txt{\tiny{#1}} &              & *+<1pt>\txt{\tiny{#2}}                        \\
                  & *[o]-<3pt>{\circ}\ar@{~}[ul]+0 \ar@{-}[ur]!<3.5pt,0pt> & \\ 
                  & \ar@{-}[u]+0 &  
}}}
}
\newcommand{\essaybullet}[2]{
\raisebox{1em}{\makebox{
\xymatrix@R=14pt@C=8pt{ 
 *\txt{\tiny{#1}}  &              & \save *\txt{\tiny{#2}} \restore           \\
                  & *[o]-<3pt>{\bullet}\ar@{~}[ul]!<-4.5pt,0pt>\ar@{-}[ur]!<3.5pt,0pt>    & \\ 
                  & \ar@{-}[u]+0 &  
}}}
}
\newcommand{\wleftas}[2]{
\raisebox{1em}{\makebox{\objectmargin={0pt}
\xymatrix@R=18pt@C=8pt{ 
 *+<1pt>\txt{\tiny{#1}}          &                         &                        &  \\ 
 *[o]-<3pt>{\bullet}\ar@{~}[u]!<0pt,-5pt> &                & *+<1pt>\txt{\tiny{#2}} &  \\
 & *[o]-<5pt>{\bullet}\ar@{-}[ul]+0\ar@{-}[ur]!<1pt,-5pt>  &                        &  \\ 
 &  \ar@{-}[u]+0 &
}}}
}
\newcommand{\wrightas}[2]{
\raisebox{1em}{\makebox{\objectmargin={0pt}
\xymatrix@R=18pt@C=8pt{ 
                         &                  & *+<1pt>\txt{\tiny{#2}}                   &  \\ 
 *+<1pt>\txt{\tiny{#1}}  &                  & *[o]-<3pt>{\bullet}\ar@{~}[u]!<0pt,-5pt> &  \\
 & *[o]-<5pt>{\bullet}\ar@{-}[ur]+0\ar@{-}[ul]!<1pt,-5pt>  &                           &  \\ 
 &  \ar@{-}[u]+0 &
}}}
}
\newcommand{\lietree}[2]{
\raisebox{1em}{\makebox{\objectmargin={0pt}
\xymatrix@R=14pt@C=8pt{ 
 *+<1pt>\txt{\tiny{#1}} &  & *+<1pt>\txt{\tiny{#2}} \\ 
 & *[o]-<5pt>{\bullet}\ar@{~}[ul]!<-4.5pt,0pt>\ar@{~}[ur]!<3.5pt,0pt> &    \\ 
 & \ar@{~}[u]+0 &  
}}}
}
\newcommand{\actree}[2]{\objectmargin={0pt}
\xymatrix@R=14pt@C=8pt{ 
 *+<1pt>\txt{\tiny{#1}} &  & *+<1pt>\txt{\tiny{#2}} \\ 
 & *[o]-<5pt>{\bullet}\ar@{~}[ul]!<-4.5pt,0pt>\ar@{-}[ur]!<-2pt,-4pt> &    \\ 
 & \ar@{-}[u]+0 &  
}}
\newcommand{\ascomtree}[2]{
\raisebox{1em}{\makebox{\objectmargin={0pt}
\xymatrix@R=14pt@C=8pt{ 
 *+<1pt>\txt{\tiny{#1}} &  & *+<1pt>\txt{\tiny{#2}} \\ 
 & *[o]-<5pt>{\circ}\ar@{~}[ul]!<-4.5pt,0pt>\ar@{~}[ur]!<3.5pt,0pt> &    \\ 
 & \ar@{~}[u]+0 &  
}}}
}
\newcommand{\astree}[2]{
\raisebox{1em}{\makebox{\objectmargin={0pt}
\xymatrix@R=12pt@C=6pt{ 
 *+<4pt>\txt{\tiny{#1}} &  & *+<4pt>\txt{\tiny{#2}} \\ 
 & *[o]-<3pt>{\bullet}\ar@{-}[ul]!<-3pt,0pt>\ar@{-}[ur]!<3.5pt,0pt> &    \\ 
 & \ar@{-}[u]+0 &  
}}}
}
\newcommand{\comowhistle}[2]{
\raisebox{1em}{\makebox{\objectmargin={0pt}
\xymatrix@R=18pt@C=8pt{ 
 *\txt{\tiny{#1}} &  & *\txt{\tiny{#2}} \\ 
 & *[o]-<5pt>{\circ}\ar@{~}[ul]!<0pt,-5pt>\ar@{~}[ur]!<0pt,-5pt> &    \\ 
 & *[o]-<5pt>{\bullet}\ar@{~}[u]!<0pt,5pt>\ar@{-}[d] &  \\
 &              &
}}}
}
\newcommand{\whistleonas}[2]{
\raisebox{1em}{\makebox{\objectmargin={0pt}
\xymatrix@R=18pt@C=8pt{ 
 *+<1pt>\txt{\tiny{#1}}                     &    & *+<1pt>\txt{\tiny{#2}}                    &  \\ 
 *[o]-<3pt>{\bullet}\ar@{~}[u]!<0pt,-5pt>  &    & *[o]-<3pt>{\bullet}\ar@{~}[u]!<0pt,-5pt> &  \\
 & *[o]-<5pt>{\bullet}\ar@{-}[ur]+0\ar@{-}[ul]+0  &                           &  \\ 
 &  \ar@{-}[u]+0 &
}}}
}
\newcommand{\bracedcorolla}[1]{
\objectmargin={0pt}
\xymatrix@R=14pt{
 {}\save[]+<0pt,6pt>*{\overbrace{\hspace{25pt}}^{#1}} +<4pt,-8pt>*{...} \restore       \\
 *[o]-<5pt>{\bullet}\ar@{~}[u]!<-5pt,0pt>\ar@{~}[u]!<-12pt,0pt>\ar@{~}[u]!<12pt,0pt>     
}}
\newcommand{\wigglyunshufled}[2]{
\objectmargin={0pt}
\xymatrix@R=14pt{
 {}\save[]+<0pt,6pt>*{\overbrace{\hspace{25pt}}^{#1}} +<4pt,-8pt>*{...} \restore     &  \\
 *[o]-<5pt>{\bullet}\ar@{~}[u]!<-5pt,0pt>\ar@{~}[u]!<-12pt,0pt>\ar@{~}[u]!<12pt,0pt> &  
 {}\save[]+<0pt,6pt>*{\overbrace{\hspace{25pt}}^{#2}} +<4pt,-8pt>*{...} \restore     &  \\
 & *[o]-<5pt>{\bullet}\ar@{~}[ul]\ar@{~}[u]!<-12pt,0pt>\ar@{~}[u]!<-5pt,0pt>
                                 \ar@{~}[u]!<12pt,0pt>\ar@{-{~}}[d]!<0pt,6pt> \\ 
 &  
}}
\newtheorem{thm}{Theorem}[subsection]
\newtheorem{lem}[thm]{Lemma}
\newtheorem{prop}[thm]{Proposition}
\newtheorem{cor}[thm]{Corollary}
\theoremstyle{definition}
\newtheorem{defn}[thm]{Definition}
\newtheorem{nota}[thm]{Notation}
\theoremstyle{remark}
\newtheorem{rem}[thm]{Remark}
\theoremstyle{remark}
\newtheorem{Convention}[thm]{Convention}
\begin{document}

\title[OCHA and Leibniz Pairs, towards a Koszul duality]{OCHA and Leibniz Pairs, towards a Koszul duality}
\author{Muriel Livernet}
\address{Universit\'e Paris 13, CNRS, UMR 7539 LAGA, 99 avenue
  Jean-Baptiste Cl\'ement, 93430 
  Villetaneuse, France}
\email{livernet@math.univ-paris13.fr}
\author{Eduardo Hoefel}
\address{Universidade Federal do Paran\'a, Departamento de Matem\'atica C.P. 019081, 81531-990 Curitiba, PR - Brazil }
\email{hoefel@ufpr.br}
\thanks{The first author thanks the hospitality of the Chern Institute at Nankai University, Tianjin, during the 
summer school and conference ``Operads and universal algebra'' and especially Bruno Vallette and Vladimir Dotsenko for fruitful discussions. 
The second author is grateful to the Erwin Sch\"oredinger Institute in Vienna for organizing the 
``2010 Higher Structures Conference'' and to Sergei Merkulov for valuable conversations. The authors are grateful to Joan Milles for his explanations on linear-quadratic operads. The collaboration is granted by the program MathAmSud ``OPECSHA'' coordinated by M. Ronco.}
\keywords{}
\subjclass[2000]{}
\date{\today}
\begin{abstract}
In this paper we study the homology of 2 versions of the swiss-cheese operad. We prove that the zeroth homology of these two versions are Koszul operads and relate this to strong homotopy Lebiniz pairs and OCHA, defined by Kajiura and Stasheff in \cite{KS06a}.
\end{abstract}
\maketitle

%
%
%
%

\setcounter{tocdepth}{2}
\tableofcontents

\section{Introduction}

In the 1963's famous paper of Gerstenhaber \cite{Ger63} is built a Gerstenhaber bracket (Lie bracket up to a shift) on the Hochschild complex $C(A,A)$ of an associative algebra $A$, and then a Gerstenhaber structure on the Hochschild cohomology of an associative algebra $HH(A$). Later on, this leads to the Deligne's conjecture, telling us that since the homology of the liltle disk operad $\cal C$ acts on $HH(A)$ then there should be an operad weakly equivalent to the singular chain complex of $\cal C$ that acts on $C(A,A)$. Different proofs of this fact were given in the last decade by McClure and Smith, Tamarkin, Kontsevich and Soibelman, Berger and Fresse and others.

Shifting the Hochschild complex by one, one gets a differential graded Lie algebra structure on $C(A,A)[1]$. Given a Lie algebra (concentrated in degree $0$ with zero differential), the pair $(L,A)$ together with a degree $0$ morphism of dgLie algebras $\phi: L\rightarrow C(A,A)[1]$ is called a Leibniz pair and has been studied by Flato, Gerstenhaber and Voronov in \cite{FGV95}. Indeed $C(A,A)[1]_0=\End(A)$ and $\phi$ has its image in $\Der(A)$. So the latter is equivalent to a pair $(L,A)$ together with a Lie algebra morphism $\phi:L\rightarrow\Der(A)$.
 Following the general idea of deforming structure maps, Kajiura and Stasheff in \cite{KS06a} and \cite{KS06b}, defined the notion of open-closed homotopy algebras (OCHA) inspired by Zwiebach's open closed string field theory. On the way to the definition of OCHA the notion of $L_\infty$-algebras acting on a $A_\infty$-algebra appeared known also as strong homotopy Lebiniz pairs (SHLP).

Let $A$ be an $A_\infty$-algebra. One can still define its deformation complex, which is bigraded
$C^{k,n}(A,A)=\prod_i \hom_\kfield((A^{\otimes k})_i,A_{i-n})$ endowed with a differential of total degree
$-1$, induced by the $A_\infty$-structure. This complex is still endowed with the Gerstenhaber bracket $C^{k,n}\otimes C^{k',n'}\rightarrow C^{k+k'-1,n+n'}$ and one can shift the first degree by one in order to obtain a dg Lie algebra. We denote by  $C^{>0}(A,A)$ the truncated complex where $k>0$. The shifted complex  $C^{>0}(A,A)[1]$ is a Lie subalgebra of the shifted deformation complex.

An SHLP (Strong Homotopy Leibniz Pair) consists of an
$L_\infty$-algebra $L$, an $A_\infty$-algebra $A$ and an
$L_\infty$-morphism $L \rightarrow C^{>0}(A,A)[1]$, called $A_\infty$-algebras over 
$L_\infty$-algebras by Kajiura and Stasheff.
If we consider the full complex $C^*(A,A)$, then an 
$L_\infty$ morphism: $L \to C^*(A,A)[1]$ is an OCHA 
(Open Closed Homotopy Algebra). \\

\medskip

Given an operad $\cal P$, one can define, under some assumptions, its Koszul cooperad $\cal P^{\ac}$. When the operad $\cal P$ is Koszul one gets a quasi-isomorphism
$$\Omega(\cal P^{\ac})\rightarrow \cal P$$ where $\Omega$ is the cobar construction for cooperads. The operad $\Omega(\cal P^{\ac})$ is often denoted $\cal P_{\infty}$ and algebras over this operad are called {\sl strong homotopy $\cal P$-algebras}. In the appendix of \cite{KS06a}, Markl showed that SHLP correspond to algebras over the operad $\Omega(\LP^{\ac})$ where $\LP$ is the operad for Lebiniz pairs. But he did not prove that the operad is Koszul. In \cite{Dolgushev10}, Dolgushev showed that OCHA correspond to algebras over $\Omega(\cal D)$, where $\cal D$ is a  cooperad that will be specified later, 
but without proving that $\cal D$ is Koszul. The aim of our paper is to prove that the operads involved are Koszul, justifying completely that SHLP and OCHA are ``strong homotopy'' algebras over an operad.
It is important to remark that though the case of SHLP is rather classical in the theory of operads (binary quadratic colored operads) the case OCHA is less classical because the operads involved are not quadratic. The technics used are the ones employed by Galvez-Carillo, Tonks and Vallette in the case of BV-algebras in \cite{GTV09}. In order to prove those theorems we use also the relation made by the second author between the swiss-cheese operad and OCHA in \cite{Hoefel09}. This relation is analogous to
the relation between strong homotopy Lie algebras and the little disk operad.

\medskip

The plan of the paper is the following. In section 2, we review some facts on colored operads and Koszul duality that will be needed for the paper. In section 3 we describe the topological operads involved: the Voronov's swiss-cheese operad $\SCvor$ and the Kontsevich's swiss-cheese operads $\SC$. These two different versions are analogous to the difference between the truncated deformation complex of an associative algebra $C^{>0}(A,A)$ and the deformation complex $C(A,A)$. Sections 4 and 5 are devoted to the study of SHLP and the zeroth homology of $\SCvor$. We define Leibniz pairs and its associated colored operad $\LP$ and prove that its Koszul dual operad is $H_0(\SCvor)$. We prove that $\LP$ is a Koszul operad. Consequently SHLP's are algebras over the cobar construction of the Koszul dual cooperad of $\LP$ which is a minimal resolution of $\LP$. We give also an interpretation of  SHLP's in terms of square zero coderivations. Sections 6 and 7 are devoted to the study of OCHA and the zeroth homology of $\SC$. One has that the operad $H_0(\SC)$ is not quadratic but can be expressed with linear-quadratic relations, so that we can apply results of \cite{GTV09}. We prove in theorem
\ref{T:SCKoszul} that $H_0(\SC)$ is a Koszul operad. This means that one has a quasi-isomorphism
$$\Omega(H_0(\SC)^{\ac})\rightarrow H_0(\SC)$$
where the left hand side is not a minimal model since there is a linear differential coming from the one on $H_0(\SC)^{\ac}$. Nevertheless, taking the Koszul dual operad of $H_0(\SC)$, denoted $H_0(\SC)^!$ we prove that there is a quasi-isomorphism
$$\Omega(\cal D)\rightarrow H_0(\SC)^!,$$
where $\cal D$ is a suspension of the cooperad $H_0(\SC)^*$. The left hand side of the quasi-isomorphism is minimal and algebras over it are OCHA as pointed out by Dolgushev in \cite{Dolgushev10} (see section 7.1). The result, however, is not totally satisfying because $H_0(\SC)^!$ is a differential graded operad and one would like to replace it by its homology. We prove in section 7 that there is a sequence of quasi-isomorphisms
$$\Omega(\cal D)\rightarrow H_0(\SC)^!\rightarrow H_*(H_0(\SC)^!),$$
not of operads but of algebras in the category of 2-collections. We use plainly that all these operads are multiplicative operads.
We prove also that the last quasi-isomorphism can not be a quasi-isomorphism in the category of operads. We interpret $ H_*(H_0(\SC)^!)$
as a suspension of the ``top homology'' of the swiss-cheese operad, completing the analogy with the little disks operad $\cal C$. In fact, up to some
suspension, one has that $\Omega(\Com^*)=\Omega(H_0(\cal C))$ is quasi-isomorphic to $\Lie$, which in turn is a suspension of the top homology 
of $\cal C$.

\medskip

\noindent{\bf Notation.} We work on a field $\kfield$ of characteristic $0$.
\begin{itemize}
\item

 The category  $\dgvs$ is the category of lower graded $\kfield$-vector spaces together with a differential of degree $-1$. We will consider the category of vector spaces and the one of graded vector spaces as full subcategories of $\dgvs$. The vector space $\hom_\kfield(V,W)$ denotes the $\kfield$-linear morphisms between two vector spaces  $V$ and $W$. When $V$ and $W$ are objects in $\dgvs$,
 the differential graded vector spaces of maps from $V$ to $W$ is
 $$\Hom_i(V,W)=\prod_n \hom_\kfield(V_n, W_{n+i})$$ together with the differential
$(\partial f)(v)=\d_W(f(v))-(-1)^{|f|}f(d_Vv)$.

 The graded linear dual of $V$ in $\dgvs$ is  $V^*=\Hom(V,\kfield)$, where $\kfield$ is concentrated in degree $0$ with $0$-differential. Consequently $(V^*)_n=(V_{-n})^*$ and $\partial f(x)=-(-1)^n f(d_Vx)$ for any $f\in (V^*)_n$ and 
  $x\in V_{-n+1}$.

\item
Let $V$ be in \dgvs.
The free non-unital commutative algebra generated by  $V$ is denoted by $S(V)$, whereas the free unital commutative algebra generated by  
$V$ is denoted by $S^+(V)$. Similarly for the notation $T(V)$ and $T^+(V)$ for the free non-unital or unital 
associative algebra generated by $V$.
\item

The symmetric group acting on $n$ elements is denoted by $S_n$.  An element $\sigma\in S_n$ will be denoted by its image notation $(\sigma(1)\ldots\sigma(n))$.
An $\mathbb S$-module is a collection of objects $(M(n))_{n\geq 0}$ in $\dgvs$ such that each $M(n)$ is endowed with an action of the symmetric group $S_n$.

\item The Koszul dual cooperad of an operad $\cal P$, when it is defined is denoted by $\cal P^{\ac}$.
Its Koszul dual operad is denoted by $\cal P^!$.
Similarly, the koszul dual algebra of a coalgebra $C$ , when it is defined is denoted by $C^{\ac}$. The notation $B$ stands for the bar construction (of operads or algebras) and $\Omega$ for the cobar construction (of cooperads or coagebras). All these notation will be specified in the text.
\end{itemize}

%
%

\section{Colored operads, 2-colored operads}

This section is devoted to a short summary concerning 2-colored operads. We give here the notation, definitions and theorems needed for the sequel. We refer mainly to  \cite{vanderlaanPhD}  for the general theory of colored operads. One can found also some comments on colored operads in    \cite{markl04}. We will refer to 
 \cite{Fr04} and \cite{LodVal} for the general treatment of  Koszul duality for operads. We explain in this section how this can be adapted to the colored case.

\subsection{Colored operads}\label{S:I-coll}

\begin{defn}[\cite{vanderlaanPhD}]\label{def:I-coll} Let $I$ be a set. The category $\Fin_I$ is the category whose objects $(X,x_0;i:X\rightarrow I)$ are pointed, 
non-empty finite sets together with a map $i$, and whose morphisms are pointed bijections commuting with $i$. 
An {\sl $I$-collection} is a contravariant functor from $\Fin_I$ to $\dgvs$.
An {\sl $I$-colored operad}  
is an $I$-collection $\cal P$ together with natural transformations, called composition maps

$$\circ_x: \cal P(X,x_0;i_X)\otimes \cal P(Y,y_0;i_Y)\rightarrow \cal P(X\cup_x Y, x_0;i_X\cup_X i_Y)$$
for any $x\in X\setminus\{x_0\}$ such that $i_X(x)=i_Y(y_0)$, 
where $i_X\cup_x i_Y: X\cup_x Y\rightarrow I$ is the map induced by $i_X$ and $i_Y$.
These natural transformations are associative, that is, for any $I$-sets $(X,x_0;i_X)$, $(Y,y_0;i_Y)$ and $(Z,z_0;i_Z)$ and 
$\alpha\in \cal P(X,x_0;i_X)$, $\beta\in \cal P(Y,y_0;i_Y)$ and $\gamma\in \cal P(Z,z_0,i_Z)$
one has
\begin{align*}
(\alpha\circ_x \beta)\circ_y \gamma=\alpha\circ_x (\beta\circ_y \gamma),&\ \textrm{\ if\ } i_X(x)=i_Y(y_0) \textrm{\ and\ } i_Y(y)=i_Z(z_0), \\
(\alpha\circ_x \beta)\circ_{x'} \gamma=(\alpha\circ_{x'} \gamma)\circ_x \beta,&\  \textrm{\ if\ } i_X(x)=i_Y(y_0) \textrm{\ and\ } 
i_X(x')=i_Z(z_0).
\end{align*} 
Furthermore, for any $I$-set $\{x,x_0,i\}$ such that $i(x)=i(x_0)$ there exists a map $\kfield\rightarrow \cal P(\{x,x_0,i\})$
which is a right and left identity for the composition maps.
\end{defn}

\subsection{2-colored operads}\label{S:2-colored}

In this article, we  work with 2-colored operads only. Most of the time the 
colours will be denoted by $\cl$ (for closed) and $\op$ (for open). Let us consider the category $\Sk_2$ whose objects are  
$(n,m;x)$ with $n,m \in \mathbb{N}$ and  
$x\in\{\cl,\op\}$ and morphisms
$$\Sk_2((n,m;x);(n',m';x'))=\begin{cases}S_{n}\times S_{m} &\ \textrm{\ if\ } n=n', m=m', x=x' \\
\emptyset & \ \textrm{else}. \end{cases}$$
A {\sl $2$-collection} is a contravariant functor from $\Sk_2$ to $\dgvs$.
There is an equivalence between the category of $2$-collections and the category of $\{\cl,\op\}$-collections as defined in definition~\ref{def:I-coll}.
The equivalence of categories goes as follows. Let $\cal P: \Fin_{\{\cl,\op\}}\rightarrow \dgvs$ be a contravariant functor. To the object
$(n,m;x)\in\Sk_2$ one associates the  object 
$J_{\{n,m;x\}}=\{0,1,\ldots,n+m\}\in \Fin_{\{\cl,\op\}}$, with the structure map  
$$\begin{array}{lccc}
j:&J_{\{n,m;x\}}&\rightarrow& \{\cl,\op\}\\
 &0& \mapsto &x \\ 
 &1\leq k\leq n&\mapsto & \cl \\
 &n+1\leq k\leq n+m&\mapsto &\op.\\
 \end{array}$$
This defines  a functor $J:\Sk_2\rightarrow\Fin_{\{\cl,\op\}}$. Note that
the category $\Sk_2$  is the skeleton of the category $\Fin_{\{\cl,\op\}}$. The functor $\cal P\mapsto\cal P\circ J$ gives one map of the 
equivalence between the two categories.
 
Conversely, let $\cal P$ be a $2$-collection. Let $(X,x;i)$ be an object in $\Fin_{\{\cl,\op\}}$ and let
$n$ be the number of elements in $i^{-1}(\cl)\setminus{x}$ and $m$ be the number of elements in $i^{-1}(\op)\setminus{x}$. The associated $\{\cl,\op\}$-collection is given by
$$\cal P(X,x;i)=\left(\bigoplus_{\Fin_{\{\cl,\op\}}((X,x;i),(J_{\{n,m;x\}},j))} \cal P(n,m;x)\right)_{S_{n}\times S_{m}},$$
where 
the coinvariant are taken under the simultaneous action of $S_{n}\times S_{m}$ on \hfill\break
$\Fin_{\{\cl,\op\}}((X,x;i),(J_{\{n,m;x\}},j))$ and 
$\cal P(n,m;x)$.

A {\sl 2-colored operad} is a 2-collection together with 
composition maps which are equivariant with respect to the action of the symmetric group, and identity maps
$I_{\cl}: \kfield\rightarrow \cal P(1,0;\cl)$ and $I_{\op}:\kfield\rightarrow \cal P(0,1;\op)$ which are unit for the composition maps . 
We will use the following notation for the compositions

\begin{align*}
\circ_i^\cl: \cal P(n,m;x)\otimes \cal P(n',m';\cl)\rightarrow \cal P(n+n'-1,m+m';x) &,\ \textrm{for}\ 1\leq i\leq n \\
\circ_i^\op: \cal P(n,m;x)\otimes \cal P(n',m';\op)\rightarrow \cal P(n+n',m+m'-1;x) &, \ \textrm{for}\ 1\leq i\leq m
\end{align*}

A $2$-colored operad is {\sl reduced} if $\cal P(0,0;x)=0$  and 
$\cal P(1,0;\cl)=\kfield=\cal P(0,1;\op)$. The $2$-collection defined by 
$$\cal I(n,m;x)=\begin{cases}\kfield &\text{ if }  n=1,m=0, x=\cl,\\
\kfield &\text{ if } n=0,m=1,x=\op,\\
0 & \text{ elsewhere, }\end{cases}$$ 
has the trivial 2-colored operad structure. A $2$-colored operad $\cal P$ is {\sl augmented} if there is a morphism of operads
$\cal P\rightarrow \cal I$. We denote by $\overline{\cal P}$  the kernel of the augmentation map.

\begin{defn} In the $2$-colored case, we will consider (colored) pairs of differential graded vector spaces  $V=(V_\cl,V_\op)$. Let $\cal P$ be a $2$-colored operad. An {\sl algebra over $\cal P$} or a {\sl $\cal P$-algebra} is a pair $V=(V_\cl,V_\op)$ in \dgvs, together with evaluation maps
$$\cal P(n,m;x)\otimes_{S_{n}\times S_{m}} (V_\cl^{\otimes n}\otimes V_\op^{\otimes m})\rightarrow V_{x}$$ 
satisfying associativity and unit conditions. Giving a $\cal P$-algebra is the same as giving a morphism of $2$-colored operads
$\cal P\rightarrow \End_V$ where $\End_V$ is the $2$-colored operad $\End_V(n,m;x)=\Hom(V_\cl^{\otimes n}\otimes V_\op^{\otimes m}, V_{x})$ with the natural action of the symmetric group and the natural compositions maps.
The forgetful functor from $\cal P$-algebras to pairs in $\dgvs$  admits a left adjoint, the free $\cal P$-algebra functor which takes the following form: let $V=(V_\cl,V_\op)$ be a pair in $\dgvs$. For  $x\in \{\cl,\op\}$, one has
\begin{equation}\label{E:free-C-algebra}
\cal P(V)_{x}=\bigoplus_{n,m} \cal P(n,m;x)\otimes_{S_{n}\times S_{m}} 
(V_\cl^{\otimes n}\otimes V_\op^{\otimes m}).
\end{equation}
\end{defn}

The forgetful functor from $2$-colored operads to 2-collections admits a left adjoint, the free 
2-colored operad functor and is
denoted by $\cal F(E)$ for $E$ a 2-collection. It can be described in terms of 2-colored trees (where the edges of the trees are colored)  
and has a natural weight grading $\cal F^{(w)}(E)$ where $w$ is the number of the vertices of the trees.

\subsection{Quadratic 2-colored operad}\label{S:quad} As pointed out in \cite{markl04} and 
\cite{vanderlaanPhD}, in order to treat Koszul duality for $I$-colored operads it is more convenient to view colored operads as $K$-operads 
as was originally defined by Ginzburg and Kapranov in \cite{GinKap94} by setting $K$ to be the semi-simple algebra $K=\oplus_{c\in I} \kfield_c$. 
The vector space $\cal P(n)=\sum_{(X,x;i) | |X|=n+1} \cal P(X,x;i)$ is then a $K-K^{\otimes n}$-bimodule. It is also an $S_n$-module and the collection $(\cal P(n))_n$ forms an $\mathbb S$-module.
 
 Thus the usual theory of Koszul operads applies in the colored context.  Most of the papers dealing with Koszul operads, 
have binary quadratic operads as  inputs. In Fresse's paper \cite{Fr04}, general quadratic operads are considered, though with no linear operations. 
In the last part of our paper, we have linear operations, and we refer to the book of Loday and Vallette \cite{LodVal} for the treatment of those operads.

\begin{defn}\label{D:quad} A {\sl quadratic $2$-colored operad} is a 2-colored operad of the form  $\cal F(E)/(R)$ where $E$ is a 2-collection, $R$ is a $\mathbb S$-submodule of $\cal F^{(2)}(E)$  and $(R)$ is the ideal generated by $R$. There are analogous notions of $2$-colored cooperads,  of free $2$-colored cooperads denoted by $\cal F^c(E)$, and of $2$-colored cooperads cogenerated by a $2$-collection $V$  with corelation $R$ denoted by $C(V,R)$. Any quadratic $2$-colored operad $\cal P$ 
admits a {\sl Koszul dual cooperad} $\cal P^{\ac}$, given by the relation
$$\text{ for } \cal P=\cal F(E)/(R) , \text{ one defines } \cal P^{\ac}=C(sE,s^2R),$$ where $sE$ denotes the suspension of the vector space $E$, that is, $(sE)_n=E_{n-1}$. \\

We say that the operad is binary quadratic if it is of the above form with $E$ binary, that is, concentrated in arity $2$. Namely, $E=E_\cl\oplus E_\op$ with $E_{x}=E(\cl,\cl;x)\oplus E(\cl,\op;x)\oplus E(\op,\cl;x)\oplus E(\op,\op;x)$.  The action of the symmetric group $S_2$ is internal in $E(\cl,\cl;x)$ and $E(\op,\op;x)$
and permutes the components $E(\cl,\op;x)$ and $E(\op,\cl;x)$. In particular, if $E$ is finite dimensional then $\dim(E(\op,\cl;x))=\dim(E(\cl,\op;x))$. 
\end{defn}

\begin{defn}\label{D:Koszuldual} If $E$ is binary and of finite dimension, the {\sl quadratic Koszul dual operad} of $\cal P=\cal F(E)/(R)$ is
$\cal P^!:=\cal F(E^\vee)/(R^{\bot})$ where $E^\vee=E^*\otimes\sgn_2$ where $\sgn$ denotes the sign representation and $R^{\bot}$ denotes the orthogonal of $R$ under the induced pairing $\cal F^{(2)}(E)\otimes \cal F^{(2)}(E^\vee)$ as defined by Ginzburg and Kapranov (see also \cite{LodVal}[chapter 7]). Let  $\Lambda$ be the $\mathbb S$-module suspension, that is, for any $\mathbb S$-module $V$
$$\Lambda(V)(k)=s^{1-k}V(k)\otimes\sgn_k.$$
Recall that if $\circ$ denotes the plethysm for $\mathbb S$-modules, then
$$\Lambda V\circ \Lambda W=\Lambda(V\circ W).$$
Moreover, if $\cal P$ is an operad , then $A$ is a  $\cal P$-algebra if and only if $sA$ is a $\Lambda\cal P$-algebra.
One has the relation:
$(\cal F(E)/(R))^{\ac}=(\Lambda (\cal F(E)/(R))^!)^*.$
\end{defn}

In the non-binary case, the above definition generalizes. Let us define the {\sl Koszul dual operad} $\cal P^!$ of a finite dimensional quadratic operad $\cal P$ as 
\begin{equation}\label{E:Koszuldual}
\cal P^!:=(\Lambda \cal P^{\ac})^* \text{ or equivalently }  \cal P^{\ac}=(\Lambda \cal P^!)^*=\Lambda^{-1}((\cal P^!)^*)
\end{equation}
Loday and Vallette prove that if one starts with a quadratic operad $\cal P$ then $\cal P^!$ is again quadratic.

\begin{prop}\cite[Proposition 724]{LodVal}  Let $\cal P=\cal F(E)/(R)$ be a quadratic operad. The operad $\cal P^!$ is quadratic and
$$\cal P^!=\cal F(s^{-1}\Lambda^{-1}E^*)/(R^\bot).$$
\end{prop}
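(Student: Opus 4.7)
The plan is to unwind both sides of the equation using the defining identity $\cal P^! = (\Lambda \cal P^{\ac})^*$ together with $\cal P^{\ac} = C(sE, s^2R)$, reducing the claim to two compatibility properties: linear dualization turns a quadratic cooperad into a quadratic operad with orthogonal relations, and the suspension $\Lambda$ commutes with the free (co)operad functor.

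First I would establish the duality step: if $\cal Q = C(V, S)$ is a quadratic cooperad with $V$ a finite-dimensional $\mathbb S$-module and $S \subset \cal F^c(V)^{(2)}$, then $\cal Q^* = \cal F(V^*)/(S^\bot)$, where $S^\bot$ is taken with respect to the natural tree-indexed pairing $\cal F^c(V)^{(2)} \otimes \cal F(V^*)^{(2)} \to \kfield$. This is standard: $\cal F^c(V)$ and $\cal F(V)$ coincide as $\mathbb S$-modules, so linear dualization interchanges cogeneration by $V$ with generation by $V^*$ and turns corelations into relations, reversing inclusions.

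Second I would exploit that $\Lambda$ is a strong monoidal endofunctor for the plethysm $\circ$, by the identity $\Lambda V \circ \Lambda W = \Lambda(V \circ W)$ recalled in the excerpt. This implies $\Lambda \cal F^c(V) \cong \cal F^c(\Lambda V)$ for any $V$, and in particular
\[
\Lambda \cal P^{\ac} = \Lambda C(sE, s^2R) = C(\Lambda(sE),\ \Lambda(s^2R)),
\]
which is still a quadratic cooperad. Dualizing and combining with the first step,
\[
\cal P^! \ =\ \cal F\bigl((\Lambda(sE))^*\bigr)\big/\bigl((\Lambda(s^2R))^\bot\bigr).
\]
Since $(\Lambda V)^* = \Lambda^{-1} V^*$ arity by arity, the generators simplify to $\Lambda^{-1}(sE)^* = \Lambda^{-1}(s^{-1}E^*) = s^{-1}\Lambda^{-1}E^*$, matching the statement. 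The identification $(\Lambda(s^2R))^\bot = R^\bot$ inside $\cal F(s^{-1}\Lambda^{-1}E^*)^{(2)}$ then follows because the weight-two pairing factors through the pairing on $E \otimes E^*$, with the suspension and sign-representation factors appearing only as invertible scalar rescalings that commute with taking orthogonals.

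The main obstacle I expect is the sign and suspension bookkeeping: one must carefully track the Koszul signs accrued when commuting $s$-shifts past tensor factors, together with the sign representation entering $\Lambda$ in arity two, so that the orthogonal of $\Lambda(s^2 R)$ agrees signwise with the $R^\bot$ already appearing in the binary definition of $\cal P^!$ recalled in Definition~\ref{D:Koszuldual}. Once these signs are reconciled, the proposition is a formal consequence of the two commutation principles (linear dual with free/cofree, and $\Lambda$ with free/cofree) described above.
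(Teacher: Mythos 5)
The paper offers no proof of this statement: it is quoted verbatim from Loday--Vallette, so there is nothing internal to compare against. Your outline is the standard argument from that reference --- unwind $\cal P^!=(\Lambda\cal P^{\ac})^*$ with $\cal P^{\ac}=C(sE,s^2R)$, use that arity-wise linear duality carries a finite-dimensional quadratic cooperad $C(V,S)$ to the quadratic operad $\cal F(V^*)/(S^\bot)$, and use that $\Lambda$ commutes with the free/cofree constructions --- and it is correct in structure. The one sentence I would push back on is the claim that the suspension and sign-representation factors are ``invertible scalar rescalings that commute with taking orthogonals'': rescaling the distinct tree-type summands of $\cal F^{(2)}(E)$ by different scalars does change the orthogonal of a subspace that mixes those summands, so the identification $(\Lambda(s^2R))^\bot=R^\bot$ really does depend on matching the Ginzburg--Kapranov sign conventions summand by summand; you correctly flag this bookkeeping as the remaining work, and that is indeed where the entire content of the proposition lives.
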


Note that if $E$ is binary, then $\Lambda^{-1}E^*=\Lambda^{-1}E^*(2)=sE^*\otimes\sgn_2$ so we recover the usual case. If a binary operation has degree $k$ in $\cal P$ then the corresponding operation has degree $-k$ in $\cal P^!$ and $k+1$ in $\cal P^{\ac}$. Note that if $E$ is linear, that is, concentrated in arity $1$, then $s^{-1}(\Lambda^{-1}E^*)=s^{-1}E^*$. Consequently  if a unary operation has degree $k$ in $\cal P$ then the corresponding operation has degree $-k-1$ in $\cal P^!$
and degree $k+1$ in $\cal P^{\ac}$.

\begin{Convention} For a $2$-colored operad $\cal P$, when dealing with $\cal P$-algebras
we will prefer the notation of section \ref{S:2-colored}, that is, the notation $\cal P(n,m;x)$ and compositions $\circ_i^x$.  When dealing with the Koszul duality of operads we will prefer 
the notation of section \ref{S:I-coll} and \ref{S:quad}. Here is an example of notation: $\cal P(\cl,\cl,\op,\cl;\op)$ and compositions $\circ_i$.
\end{Convention}

\begin{rem} All the definitions above make sense if we replace the category $\dgvs$ by the category of topological spaces, except for the equivalence of category between 
$2$-collections and $\{\cl,\op\}$-collections, where one  replaces
$$\cal P(X,x;i)=\left(\bigoplus_{\Fin_{\{\cl,\op\}}((X,x;i),(J_{\{n,m;x\}},j))}
\cal P(n,m;x)\right)_{S_{n}\times S_{m}}$$
by
$$\cal P(X,x;i)=\Fin_{\{\cl,\op\}}((X,x;i),(J_{\{n,m;x\}},j))\times_{(S_{n}\times S_{m})} \cal P(n,m;x).$$
Given a $2$-colored operad in topological spaces, its singular chain complex with coefficients in $\kfield$, or its cellular chain complex if the 
topological operad is an operad in CW-complexes, is an operad in $\dgvs$.
Moreover its homology with coefficients in $\kfield$ is an operad in graded vector spaces. And its degree $0$ homology with coefficients in 
$\kfield$ is an operad in vector spaces.
\end{rem}

%
%
%
%

\section{The Voronov's swiss-cheese and the Kontsevich's swiss-cheese }

Here we provide two definitions for the swiss-cheese operad. The difference between the two will be explored 
in correspondence with the difference between OCHAs and Leibniz Pairs. The swiss-cheese as originally defined by Voronov 
\cite{Voronov99} will be denoted by $\SCvor$ and called the Voronov's swiss-cheese operad. With a minor modification, we get another operad denoted $\SC$ which 
is homotopy equivalent to the operad of Kontsevich's compactification of the configuration space of points 
in the upper closed half plane, introduced in \cite{Kontse03}. We call it the swiss-cheese operad or the Kontsevich's swiss-cheese operad.

Voronov has shown that algebras over $H(\SCvor)$ are pairs $(G,A)$ where $G$ is a Gerstenhaber algebra 
and $A$ is an associative algebra with the structure of an algebra over $G$ (viewed as a commutative ring). 
In the case of $H(\SC)$, the structure of algebra over $G$ is given by a unit, i.e., 
a central algebra homomorphism $w : G \to A$.
Note that being unital as an algebra over $G$ does not mean that $A$ is unital 
as an algebra over the groud field $\bf k$. 
\subsection{Little disks}

Let $D^2$ denote the standard unit disk in the complex plane $\bb C$. By a configuration of $n$ disks in $D^2$ we mean a map 
\begin{equation*}
 d : \coprod_{1 \leqslant s \leqslant n} D^2_s \to D^2
\end{equation*}
from the disjoint union of $n$ numbered standard disks $D^2_1, \dots, D^2_n$ to $D^2$
such that $d$, when restricted to each disk, is a composition of translations and dilations, and such that the images of the different components are disjoint. 
The image of each such restriction is called a little disk.
The space of all configurations of $n$ disks is denoted $\cal D_2(n)$ and is topologized 
as a subspace of $(\bb R^2 \times \bb R^+)^n$ containing the coordinates of the center and 
radius of each little disk. The symmetric group acts on $\cal D_2(n)$ by renumbering the disks. 
For $n=0$, we define $\cal D_2(0) = \emptyset$. 
The $\mathbb S$-module $\cal D_2 = \{ \cal D_2(n) \}_{n \geqslant 0}$ admits a well known structure 
of operad given by gluing configurations of disks into little disks, see \cite{May72} for the original definition.

\begin{defn} For $m,n \geqslant 0$ such that $m + n >0 $, let us define $\cal{SC}(n,m;\op)$ as the space of those 
configurations $d \in \cal D_2(2n + m)$ such that its image in $D^2$ is invariant under complex conjugation 
and exactly $m$ little disks are left fixed by conjugation.
A little disk that is fixed by conjugation must be centered at the real line, in this case
it is called {\it open}. Otherwise, it is called {\it closed}.  
The little disks in $\cal{SC}(n,m;\op)$ are labelled according to the following rules:
\begin{enumerate}[i)]
\item Open disks have labels in $\{1, \dots, m \}$ and closed disks 
have labels in $\{ 1, \dots, 2n \}$. 
\item Closed disks in the upper half plane have labels in $\{ 1, \dots, n \}$. 
If conjugation interchanges the images of two closed disks, their labels must be congruent modulo $n$.
\end{enumerate}
\end{defn}

There is an action of $S_n \times S_m$ on $\cal{SC}(n,m;\op)$ extending the action of
$S_n \times \{ e \}$ on pairs of closed disks having modulo $n$ congruent labels and the action of
$\{ e \} \times S_m$ on open disks. Figure \ref{swiss_disc} illustrates a point in the space 
$\cal{SC}(n,m;\op)$.

\begin{figure}
  \input{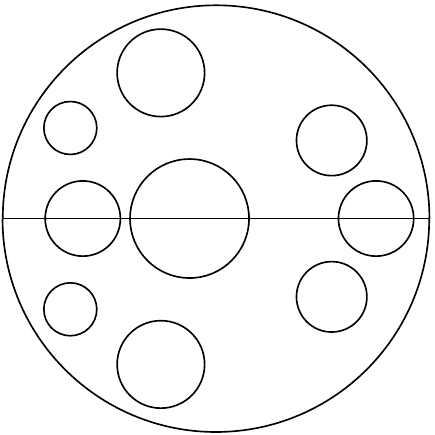_t}
\caption{A configuration in $\cal{SC}(n,m;\op)$}
\label{swiss_disc}
\end{figure}

\begin{defn}[\sl swiss-cheese operad]
 The 2-colored operad $\cal{SC}$ is defined as follows. For $m,n \geqslant 0$ with $m+n > 0$, $\cal{SC}(n,m;\op)$
is the configuration space defined above and $\cal{SC}(0,0;\op) = \emptyset$. For $n \geqslant 0$,  $\cal{SC}(n,0;\cl)$
is defined as $\cal D_2(n)$ and $\cal{SC}(n,m;\cl) = \emptyset$ for $m \geqslant 1$. 
The operad structure in $\cal{SC}$ is given by: 
\begin{align*}
& \circ_i^\cl: \SC (n,m;x)\times \cal{SC}(n',0;\cl)\rightarrow \cal{SC}(n + n'-1,m;x), \ \textrm{for}\ 1\leqslant i\leqslant n \\
& \circ_i^\op: \SC (n,m;x)\times \cal{SC}(n',m';\op)\rightarrow \cal{SC}(n + n',m + m' -1;x), \ \textrm{for}\ 1\leqslant i\leqslant m 
\end{align*}
When $x=\cl$ and $m = 0$, $\circ_i^\cl$ is the usual gluing of little disks in $\cal D_2$. 
If $x=\op$, $\circ_i^\cl$ is defined by gluing each configuration of $\cal{SC}(n',0;\cl)$ in the little disk labelled by $i$ 
and then taking the complex conjugate of the same configuration and gluing the resulting configuration in the little disk 
labelled by $i+n$. Since $\SC (n,m;\cl) = \emptyset$ for $m \geqslant 1$, 
$\circ_i^\op$ is only defined for $x = \op$ and is given by the usual gluing operation of $\cal D_2$. 
\end{defn}

\begin{defn}[\sl Voronov's swiss-cheese operad]
There is a suboperad $\SCvor$ of $\SC$ defined as follows: 
\[ 
\SCvor(n,m;x) := \left\{ 
\begin{array}{ll}
 \SC(n,m;x), &\  \textrm{if}\  x = \cl \ \textrm{or}\  m \geqslant 1 \\
 \emptyset, &\  \textrm{otherwise}.
\end{array}\right.
\]
\end{defn}
The operad $\SCvor$ is the 2-colored operad originally defined by Voronov in \cite{Voronov99}. On the other hand, $\SC$ 
is the 2-colored operad defined by Kontsevich in \cite{Kontse99}, except that, according to his definition, the operad reduces 
to the one point space when $n=0$ and $m=1$. The above definition says that $\SC(0,1;\op)$ is the space of all configurations 
containing only one disk centered at the real line, a contractible space.   

%
%
%
%

\subsection{Configurations of points in the upper closed half-plane}



Here we describe the
Fulton-MacPherson compactification of the configuration space of points in the upper closed 
half-plane introduced by Kontsevich in \cite{Kontse03}. We assume the reader has familiarity
with the Fulton-MacPherson compactification of points in the complex plane
(see \cite{AxelSing94,GetJon94,Merkulov10pr}). 
In the case of points in the upper closed half-plane, a compactification following the guidelines of 
Fulton-MacPherson was introduced by Kontsevich and will be referred to as the 
Kontsevich's compactification in accordance with the terminology used in \cite{Merkulov10pr}). 

Let $p,q$ be non-negative integers satisfying the 
inequality $2p+q \geqslant 2$. We denote by Conf$(p,q)$ the configuration space of marked points
on the upper closed half-plane $H = \{ z \in \mathbb{C} \;|\; {\rm Im}(z) \geqslant 0 \}$ with 
$p$ points in the interior and $q$ points on the boundary (real line): 
\begin{equation*} 
    {\rm Conf}(p,q) = \{ (z_1, \dots, z_p, x_1, \dots, x_q) \in H^{p+q} \;|\;
   z_{i_1} \neq z_{i_2},\, x_{j_1} \neq x_{j_2} \ {\rm Im}(z_i) > 0,\,  {\rm Im}(x_j) = 0.  \}  
\end{equation*}

The above configuration space ${\rm Conf}(p,q)$ is the cartesian product of an open subset of $H^p$ and 
an open subset of $\mathbb{R}^q$ and, consequently, is a $2p + q$ 
dimensional smooth manifold. 
Let $C(p,q)$ be the quotient of ${\rm Conf}(p,q)$ by the action of the 
group of orientation preserving affine transformations that leaves the real line fixed: 
$C(p,q) = {\rm Conf}(p,q)\Big/(z \mapsto az + b)$ where 
$a,b \in \mathbb{R}, \; a > 0$. 
The condition $2p+q \geqslant 2$ ensures that the action 
is free and thus $C(p,q)$ is a $2p + q - 2$ dimensional smooth manifold. In the case of points in the 
complex plane we have: ${\rm Conf}(n) = \{ (z_1, \dots, z_n) \in \C^n \;|\; z_i \neq z_j, \forall i \neq j \}$
and $C(n) = {\rm Conf}(n)\Big/(z \mapsto az + b)$ where 
$a \in \mathbb{R}, \; a > 0 \makebox{ and } b \in \C$. The manifold $C(n)$ is $2n - 3$ dimensional and its 
Fulton-MacPherson compactification is denoted by $\overline{C(n)}$.

Let $\phi$ be the embedding
$\phi : C(p,q)  \longrightarrow C(2p + q)$
defined by 
\begin{equation}\label{embedd}
\phi(z_1 , \dots, z_p, x_1, \dots, x_q) = 
(z_1 ,\bar{z}_1, \dots, z_p, \bar{z}_p, x_1, \dots, x_q) 
\end{equation}
where $\bar z$ denotes complex conjugation. 
The Fulton-MacPherson compactification of $C(p,q)$ is defined as the 
closure in $\overline{C(2p + q)}$ of the image of $\phi$ and 
is denoted by $\overline{C(p,q)}$.

Both compactifications $\overline{C(n)}$ and $\overline{C(p,q)}$ have the structure of manifolds 
with corners whose boundary strata are labelled by trees (see \cite{Hoefel06,Kontse03,Merkulov10pr}).
Such labelling by trees defines a 2-colored operad structure denoted by $\cal H_2$. The set 
of colors is $\{ \op,\cl \}$ and 
\begin{equation*}
\cal H_2(p,q;x) := \left\{ 
\begin{array}{ll}
 \overline{C(p,q)}, &\  \textrm{if}\  x = \op  \         \textrm{and}\   2p + q \geqslant 2, \\
 \overline{C(p)},   &\  \textrm{if}\  x = \cl, \ q = 0 \ \textrm{and}\   p \geqslant 2,      \\
 \emptyset,         &\  \textrm{if}\  x = \cl  \         \textrm{and}\   q \geqslant 1.
\end{array}\right.
\end{equation*}
In addition, we define $\cal H_2(1,0;\cl)$ and $\cal H_2(0,1;\op)$ as the one point space.
The labelling by trees in the boundary strata of $\overline{C(p,q)}$ and $\overline{C(n)}$ gives $\cal H_2$ the structure
of a 2-colored operad. The operads $\cal H_2$ and $\SC$ have the same homology and they are weakly equivalent. 
The proof is similar to
the one given by P. Salvatore in \cite[Prop 4.9]{Salvatore01} for the weak equivalence between the little disk operad $\cal C$ and the Fulton-MacPherson operad.

\medskip

In the sequel, we will use indifferently the operads $\cal H_2$ and $\SC$. The operad $\cal H_2$ will be our geometric model.

\section{The operad $H_0({\SCvor})$ and Leibniz pairs}

We prove in this section that the operad $H_0(\SCvor)$ is a quadratic Koszul operad with Koszul dual the operad of Leibniz pairs.


\subsection{The homology of the Voronov swiss-cheese operad}

The homology of the swiss-cheese operad has been computed by Voronov in \cite{Voronov99},
using the earlier computation of F. Cohen in \cite{CohenF76} of the homology of the configuration space of $p$ points on the complex plane. 
We begin by presenting the operad $H(\SCvor)$ in terms of generators and relations using the language of trees.

For presenting a 2-colored operad using trees, we need colored edges. We will adopt the colors {\it wiggly} 
\raisebox{3pt}{\wiggly} and 
{\it straight} \raisebox{3pt}{\straight} corresponding, respectivelly, to closed and open.
The space $\SCvor(2,0;\cl)$ is homotopy equivalent to $S^1$, the action of $S_2$ being the $\Z/2\Z$-action on $S^1$ via the antipodal map, hence $H(\SCvor(2,0;\cl))$ has one commutative generator in degree 0 
denoted by $f_2=$\raisebox{1.5em}{\comm{\txt{\tiny 1}}{\txt{\tiny 2}}{~}} and another commutative one, in degree 1 denoted by 
\raisebox{1.5em}{\lie{\txt{\tiny 1}}{\txt{\tiny 2}}{~}}. 
The space $\SCvor(0,2;\op)$ is homotopy equivalent to $2$ points and the action of $S_2$ is free. The planar tree
$e_{0,2}=$ \raisebox{1.5em}{\ass{\txt{\tiny 1}}{\txt{\tiny 2}}{-}}\, is a generator of the $S_2$-module
 $H_0(\SCvor(0,2;\op))=H(\SCvor(0,2;\op)).$
The space $\SCvor(1,1;\op)$ is  contractible 
and the only generator of $H(\SCvor(1,1;\op))$ is denoted by $e_{1,1}$=\raisebox{1.5em}{\act{\txt{\tiny 1}}{\txt{\tiny 1}}{-}}. The above elements generate the 
operad $H(\SCvor)$. Using the result of F. Cohen about algebras over $H(\cal D_n)$, A. Voronov has proven that algebras over 
$H(\SCvor)$ are equivalent to a pair of vector spaces $(G,A)$ with the structure described in the following theorem.

\begin{thm}[A. Voronov \cite{Voronov99}]
 An algebra over $H(\SCvor)$ is a pair $(G,A)$ where $G$ is a Gerstenhaber algebra and $A$ is an associative algebra over the commutative ring $G$. 
\end{thm}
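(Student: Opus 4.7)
The plan is to identify the algebraic structure on $(G,A)$ piece by piece by computing the low-arity components of $\SCvor$, and then to show that these pieces exhaust the operad. The strategy follows the template used by F. Cohen to compute $H(\cal D_2)$.

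First, observe that the subcollection $\SCvor(n,0;\cl)$ is exactly the little disks operad $\cal D_2$. By the theorem of F. Cohen, a $H(\cal D_2)$-algebra is precisely a Gerstenhaber algebra, so $G$ inherits a commutative associative product of degree $0$ (from $f_2$) and a Lie bracket of degree $1$ (from the other generator of $H_*(S^1)$) satisfying the Poisson/Leibniz compatibility. This fixes the closed sector. Next, I would analyze the purely open arities $\SCvor(0,m;\op)$: these parametrize $m$ little disks arranged along the real axis in the upper half disk; the component structure shows $\SCvor(0,m;\op)\simeq S_m$ as $S_m$-spaces (one component per linear ordering), so $H_0$ is the associative operad generated by $e_{0,2}$, endowing $A$ with an associative (not necessarily commutative) product.

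Next I would handle the mixed arities. The space $\SCvor(1,1;\op)$ is contractible, giving a single operation $e_{1,1}$ of degree $0$; iterating gives a map $G\otimes A\to A$. To determine the relations involving $e_{1,1}$, I would inspect $\SCvor(2,1;\op)$ and $\SCvor(1,2;\op)$. For $\SCvor(2,1;\op)$ one gets, up to homotopy, two closed disks and one open disk on the real line, and by projecting to the pair of closed disks one recovers a $\cal D_2$-type structure relative to the real line; this forces the action of $G$ on $A$ to factor through the commutative product $f_2$ (so $A$ is a module over the commutative ring $G$) and the Lie bracket to act on $A$ by derivations. For $\SCvor(1,2;\op)$, which is homotopy equivalent to two intervals on the real line with a closed disk above, the connected-component count combined with an $S_2$-equivariance check forces the action of $G$ on $A$ to commute with multiplication by elements of $A$ on either side, i.e.\ it lands in the center of $A$ viewed as a $G$-module — equivalently, $A$ is an associative algebra over the commutative ring $G$.

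Finally I would prove that no other generators or relations occur. For this, the key is to show that the obvious map from the free operad on $\{f_2, [\,,\,], e_{0,2}, e_{1,1}\}$ modulo the relations found above surjects onto $H(\SCvor)$, and then count dimensions against Voronov's/Cohen's computation of $H_*(\SCvor(n,m;\op))$ (which in turn uses the Fadell--Neuwirth fibration and its half-plane analogue for configurations). The main obstacle is this last step: proving that the presentation is complete, i.e.\ that the arity-wise dimensions of the quotient of the free operad agree with the Poincar\'e series of $H_*(\SCvor(n,m;\op))$. This is the nontrivial enumerative part, and it is handled by combining Cohen's result on $\cal D_2$ with a fibration argument expressing $\SCvor(n,m;\op)$ as a configuration of $n$ closed points in the upper half plane fibered over a configuration of $m$ ordered points on the real line, and then matching the resulting Serre spectral sequence with the dimensions predicted by the presentation.
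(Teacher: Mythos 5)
The paper does not actually prove this statement: it is quoted as Voronov's theorem, with the remark that it rests on F.~Cohen's computation of $H_*$ of configuration spaces, and the text only records the generators ($f_2$, the degree-one class in $H(\SCvor(2,0;\cl))$, $e_{0,2}$, $e_{1,1}$) and their sources. Your sketch reconstructs exactly that route --- closed sector from Cohen's theorem on $H(\cal D_2)$, open sector from the $m!$ contractible components of $\SCvor(0,m;\op)$, the action from the contractibility of $\SCvor(1,1;\op)$, relations from arities $(2,1;\op)$ and $(1,2;\op)$, and completeness by matching Poincar\'e series via Fadell--Neuwirth-type fibrations --- so in approach it agrees with the cited proof, and you are right that the dimension count is where all the real content sits.

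One assertion in your sketch is wrong, though it does not affect the stated conclusion: the Gerstenhaber bracket of $G$ does \emph{not} ``act on $A$ by derivations.'' Since $\SCvor(1,1;\op)$ is contractible, the only operation $G\otimes A\to A$ has degree $0$; there is no degree-one bracket action on $A$ at all, and the class $\rho([g,g'],a)$ spanning $H_1(\SCvor(2,1;\op))\cong H_1(S^1)$ satisfies no Leibniz-type relation --- indeed the centrality relation $\rho(c,aa')=\rho(c,a)a'$ of Corollary \ref{L:scvor_alg} is incompatible with a derivation rule. If you were to impose that extra relation in your presentation, the arity-wise dimension count against Cohen's computation would fail, so drop it before attempting the enumerative step.
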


We observe that any Gerstenhaber algebra is a commutative ring in particular. In the above theorem, the notion 
of algebra over $G$ corresponds to a map $\rho : G \otimes A \to A$ satisfying the properties stated in the next 
corollary

 \newcommand{\fst}[1]{\txt{\tiny {\emph #1}}}   
 \newcommand{\snd}[2]{{}\save[] +<0pt,7pt>*{\lie{\txt{\tiny \emph #1}}{\txt{\tiny \emph #2}}{}} \restore}   
 \newcommand{\trd}[2]{{}\save[] +<0pt,7pt>*{\act{\txt{\tiny\emph #1}}{\txt{\tiny\emph #2}}{}} \restore}   
 \newcommand{\fth}[2]{{}\save[] +<0pt,7.5pt>*{\ass{\txt{\tiny \emph #1}}{\txt{\tiny \emph #2}}{}} \restore}   
 \newcommand{\fif}[2]{{}\save[] +<0pt,7.5pt>*{\comm{\txt{\tiny\emph #1}}{\txt{\tiny\emph #2}}{}} \restore}   
 
%
%

\begin{cor}[\sl $H_0(\SCvor)$-algebras] \label{L:scvor_alg}  An $H_0(\SCvor)$-algebra consists in the following data: 
a commutative algebra $C$ and an associative algebra $A$ which is a $C$-module, i.e. there is a map $\rho:C\otimes A\rightarrow A$
satisfying 
\begin{equation}\label{eq:C-A-module}
\rho(cc',a)=\rho(c,\rho(c',a))=\rho(c',\rho(c,a)).
\end{equation}

In addition, The structure map $\rho$ satisfies the condition
\begin{equation}\label{eq:C-A}
\rho(c,aa')=\rho(c,a)a'=a\rho(c,a').
\end{equation}

\end{cor}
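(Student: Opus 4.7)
The plan is to deduce this as a direct consequence of Voronov's theorem by restricting all of the structure to degree $0$.

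First I would invoke Voronov's theorem (stated just before the corollary): an algebra over $H(\SCvor)$ is a pair $(G,A)$ with $G$ a Gerstenhaber algebra and $A$ an associative algebra equipped with a map $\rho \colon G\otimes A\to A$ making $A$ into an algebra over the underlying commutative ring of $G$. Because $H_0(\SCvor)$ is the degree-zero part of $H(\SCvor)$, an $H_0(\SCvor)$-algebra is obtained by concentrating the above data in degree $0$. Setting $C:=G_0$, the Gerstenhaber bracket --- which has degree $+1$ --- lands outside degree $0$ and therefore contributes nothing to the structure of $C$; only the commutative product survives. Hence $C$ is a commutative (associative) algebra, $A=A_0$ is an associative algebra, and $\rho \colon C\otimes A\to A$ gives $A$ the structure of an algebra over the commutative ring $C$.

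Next I would unpack what it means for $A$ to be an algebra over the commutative ring $C$. This consists of two conditions. The first is that $\rho$ is a unital (in the relevant sense) $C$-module action: $\rho(cc',a)=\rho(c,\rho(c',a))$. Combined with the commutativity of $C$, the module axiom forces $\rho(c,\rho(c',a))=\rho(cc',a)=\rho(c'c,a)=\rho(c',\rho(c,a))$, which yields the full identity \eqref{eq:C-A-module}. The second condition is that for each $c\in C$ the operator $\rho(c,-)$ is an $A$-bimodule map, i.e., a central element with respect to the associative product of $A$; this is exactly \eqref{eq:C-A}: $\rho(c,aa')=\rho(c,a)a'=a\rho(c,a')$.

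I would then point out, as a consistency check, that these relations match what one reads off directly from the generators in terms of trees. In degree $0$ the generators are the commutative product $f_2$ on $C$, the associative product $e_{0,2}$ on $A$, and the action $e_{1,1}=\rho$, coming respectively from $H_0(\SCvor(2,0;\cl))$, $H_0(\SCvor(0,2;\op))$ and $H_0(\SCvor(1,1;\op))$. The relation \eqref{eq:C-A-module} corresponds to the two ways of composing two copies of $e_{1,1}$ with $f_2$ in arity $(2,1;\op)$, and \eqref{eq:C-A} to the two ways of composing $e_{1,1}$ with $e_{0,2}$ in arity $(1,2;\op)$; these arise from connected components of the corresponding configuration spaces agreeing under the operad composition in $\SCvor$.

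There is no real obstacle here: the content is entirely contained in Voronov's theorem, and the corollary is simply its translation in degree $0$. The only step that requires a line of care is verifying that the degree-$1$ Gerstenhaber generator (the bracket) plays no role when we restrict to degree $0$, which is immediate from its degree.
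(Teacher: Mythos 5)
Your proposal is correct and follows essentially the same route as the paper: the corollary is read off from Voronov's theorem by restricting to degree $0$, where the degree-$1$ Gerstenhaber bracket disappears and only the commutative product on $C$, the associative product on $A$, and the action $\rho$ survive, with relations \eqref{eq:C-A-module} and \eqref{eq:C-A} unpacking ``associative algebra over the commutative ring $G$''. The tree-level consistency check you add is exactly the translation the paper displays immediately after the statement.
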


Using the trees described above, relations (\ref{eq:C-A-module}) and (\ref{eq:C-A}) correspond respectively to:

\[
        \hspace*{\stretch{1}}
        \act{\fif{\it1}{\it2}}{\fst{\it1}}{-} \hskip 1em \raisebox{-8pt}{\emph =} \hskip 1em \act{\fst{\it1}}{\trd{\it2}{\it1}}{-}
        \hskip 1em \raisebox{-8pt}{\emph =} \hskip 1em \act{\fst{\it2}}{\trd{\it1}{\it1}}{-}
        \hskip 2em \raisebox{-8pt}{{\rm \text{and}}} \hskip 2em
        \act{\fst{\it1}}{\fth{\it1}{\it2}}{-}      \hskip 1.5em \raisebox{-8pt}{\emph =} \hskip 2em 
        \ass{\trd{\it1}{\it1}}{\fst{\it2}}{-}      \hskip 1em   \raisebox{-8pt}{\emph =} \hskip 1em 
        \ass{\fst{\it1}}{\trd{\it1}{\it2}}{-}        \hskip 1em   \raisebox{-8pt}{{\rm .}}
        \hspace*{\stretch{2}}
\]

The next  corollary is a consequence of corollary \ref{L:scvor_alg} and formula (\ref{E:free-C-algebra}).

\begin{cor}[\sl Free $H_0(\SCvor)$-algebras]\label{C:freescvor} Let  $V=(V_\cl,V_\op)$ be a pair of graded vector space. 
The free  $H_0(\SCvor)$-algebra generated by the vector space $V$  has the following form:
\begin{align}
H_0(\SCvor)(V)_\cl=&S(V_\cl), \\
H_0(\SCvor)(V)_\op=&S^+(V_\cl)\otimes T(V_\op),
\end{align}
where $S(V_\cl)$ is the free commutative algebra and $H_0(\SCvor)_\op$ is the associative algebra obtained as the tensor product of the two associative algebras $S^+(V_\cl)$ and $T(V_\op)$. The module structure is given by
$$\begin{array}{ccc}
 S(V_\cl)\otimes (S^+(V_\cl)\otimes T(V_\op)) &\rightarrow&  S^+(V_\cl)\otimes T(V_\op) \\
x\otimes (u\otimes v) & \mapsto & xu\otimes v
\end{array}$$
\end{cor}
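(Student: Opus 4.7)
The plan is to prove the formulas by verifying the universal property of the free $H_0(\SCvor)$-algebra, using the explicit description of such algebras given by Corollary~\ref{L:scvor_alg}.

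First I would check that the pair
\[
(C, A) := \bigl(S(V_\cl),\; S^+(V_\cl) \otimes T(V_\op)\bigr)
\]
is an $H_0(\SCvor)$-algebra in the sense of Corollary~\ref{L:scvor_alg}. The closed part $C$ is commutative by construction; the open part $A$ is associative as the tensor product of the two associative algebras $S^+(V_\cl)$ and $T(V_\op)$; and the proposed action $\rho : C \otimes A \to A$, $x \otimes (u \otimes v) \mapsto (xu) \otimes v$, satisfies the module axiom~\eqref{eq:C-A-module} thanks to commutativity of $S(V_\cl)$, and the compatibility~\eqref{eq:C-A} because the $S^+(V_\cl)$-factor commutes past the $T(V_\op)$-factor by construction of the tensor-product multiplication on $A$.

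Next, the canonical inclusions $V_\cl \hookrightarrow S(V_\cl)$ and $V_\op \hookrightarrow 1 \otimes T(V_\op) \subset S^+(V_\cl) \otimes T(V_\op)$ provide a morphism of pairs $\iota : V \to (C, A)$. To establish the universal property, I would take any $H_0(\SCvor)$-algebra $(C', A')$ with structure map $\rho'$ and any morphism of pairs $f = (f_\cl, f_\op) : V \to (C', A')$. Since $S(-)$ is the free commutative algebra functor, $f_\cl$ extends uniquely to a commutative algebra map $\tilde f_\cl : S(V_\cl) \to C'$. On the open side, the relations~\eqref{eq:C-A-module} and~\eqref{eq:C-A} allow one to bring every product of closed generators to the outside of any product of open generators; hence $\tilde f_\op$ is forced to be given by
\[
\tilde f_\op\bigl(c \otimes (v_1 \otimes \cdots \otimes v_k)\bigr) = \rho'\bigl(\tilde f_\cl(c),\; f_\op(v_1) \cdots f_\op(v_k)\bigr).
\]

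The main verification that remains, and the step carrying all the content, is that the formula above gives a well-defined map on $S^+(V_\cl) \otimes T(V_\op)$ that respects both the associative multiplication and the $C$-action — equivalently, that the right-hand side presentation has no hidden relations. This is a direct check using the axioms~\eqref{eq:C-A-module} and~\eqref{eq:C-A} of Corollary~\ref{L:scvor_alg}. Alternatively, one could derive the formulas from~\eqref{E:free-C-algebra} by identifying the $S_n \times S_m$-modules $H_0(\SCvor)(n, m; x)$: the trivial $S_n$-representation of dimension one for $x = \cl$ and $m = 0$; the trivial $S_n$-representation tensored with the regular $S_m$-representation for $x = \op$ with $m \geq 1$; and zero for $x = \op$, $m = 0$ in accordance with $\SCvor(n, 0; \op) = \emptyset$. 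Summing over $n \geq 0$ and $m \geq 1$, and using the unit $1 \in S^+(V_\cl)$ to absorb the $n = 0$ summand, then reconstructs $S^+(V_\cl) \otimes T(V_\op)$ as claimed.
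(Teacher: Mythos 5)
Your proposal is correct, and in fact it contains two valid arguments, one of which is exactly the paper's. The paper gives no explicit proof of this corollary: it is stated as a direct consequence of Corollary~\ref{L:scvor_alg} and formula~(\ref{E:free-C-algebra}), i.e.\ precisely your ``alternative'' route of identifying the $S_n\times S_m$-modules ($\kfield$ trivial for the closed colour, $\kfield\otimes\kfield[S_m]$ for the open colour with $m\geq 1$, and $0$ for $(n,0;\op)$ since $\SCvor(n,0;\op)=\emptyset$) and then summing in~(\ref{E:free-C-algebra}). Your primary argument via the universal property is a genuinely different but equally standard route; it is in fact the strategy the paper itself uses for the analogous statement about free $\LP$-algebras (Lemma~\ref{L:freeLP}), so it fits the paper's style. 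The representation-theoretic route buys you the answer with no verification at all once the module structure of $H_0(\SCvor)$ is known, while the universal-property route is self-contained given only the algebraic description of Corollary~\ref{L:scvor_alg}. One small point in the universal-property argument: your forced formula $\tilde f_\op\bigl(c\otimes(v_1\otimes\cdots\otimes v_k)\bigr)=\rho'\bigl(\tilde f_\cl(c),f_\op(v_1)\cdots f_\op(v_k)\bigr)$ does not literally apply on the summand $\kfield\otimes T(V_\op)$ of $S^+(V_\cl)\otimes T(V_\op)$, since $\tilde f_\cl$ is only defined on the non-unital algebra $S(V_\cl)$ and $C'$ need not act through a unit; there you must instead take the unique associative-algebra map $T(V_\op)\to A'$ extending $f_\op$. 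This is a notational fix, not a gap.
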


The next corollary is the transcription  of corollary \ref{L:scvor_alg} in terms of quadratic operads defined in section \ref{S:quad}.

\begin{cor}[\sl Operad description]\label{C:scvor_operad}The operad $H_0(\SCvor)$ has a binary quadratic presentation $\mathcal F(E_v)/(R_v)$ where
$$E_v=\underbrace{kf_2}_{=E_v(\cl,\cl;\cl)}\oplus
\underbrace{k[S_2]e_{0,2}}_{=E_v(\op,\op;\op)} \oplus \underbrace{k[S_2] e_{1,1}.}_{=E_v(\cl,\op;\op)\oplus E_v(\op,\cl;\op)}$$
The action of the symmetric group on $f_2$ is the trivial action.  The representation $k[S_2]$ is the regular representation. The element $e_{1,1}$ forms a basis of $E_v(\cl,\op;\op)$ and $e_{1,1}\cdot (21)$ a basis of $E_v(\op,\cl;\op)$.  The vector space $R_v$ is the submodule of the $S_3$-module$\mathcal F^{(2)}(E_v)$ generated by the relations:
\begin{itemize}
\item associativity of $f_2$: $f_2\circ_2 f_2-f_2\circ_1 f_2$,
\item associativity of $e_{0,2}$: $e_{0,2}\circ_2 e_{0,2}-e_{0,2}\circ_1 e_{0,2}$,
\item  $e_{1,1}$ is an action:

 $e_{1,1}\circ_1^\op e_{1,1}-e_{1,1}\circ_1^\cl f_2,$

$e_{1,1}\circ_1^\op e_{0,2}-e_{0,2}\circ_1^\op e_{1,1}$ and $e_{1,1}\circ_1^\op e_{0,2}-(e_{0,2}\circ_2^o e_{1,1})\cdot (213)$.
\end{itemize}
\end{cor}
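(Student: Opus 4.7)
The plan is to exhibit a map from the proposed quadratic operad $\mathcal{P} := \mathcal{F}(E_v)/(R_v)$ to $H_0(\SCvor)$ and show it is an isomorphism by matching free algebras via Corollary \ref{C:freescvor}.

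First I would send the generators $f_2$, $e_{0,2}$, $e_{1,1}$ of $\mathcal{F}(E_v)$ to the corresponding classes in $H_0(\SCvor)$ identified just before Corollary \ref{L:scvor_alg}. Each of the five defining relations in $R_v$ is then directly verified in $H_0(\SCvor)$ using Corollary \ref{L:scvor_alg}: associativity of $f_2$ is built into $H_0(\cal D_2)=\Com$; associativity of $e_{0,2}$ comes from the associative algebra structure on the open color; and the three action-type relations encode, respectively, the module axiom $\rho(cc',a)=\rho(c,\rho(c',a))$ from (4.2) and the two Leibniz-like identities $\rho(c,aa')=\rho(c,a)a'$ and $\rho(c,aa')=a\rho(c,a')$ from (4.3). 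This yields a well-defined morphism of 2-colored operads $\bar\phi : \mathcal{P} \to H_0(\SCvor)$.

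Next I would argue surjectivity. It suffices to show $f_2$, $e_{0,2}$, $e_{1,1}$ generate $H_0(\SCvor)$ as an operad. This is exactly the content of Corollary \ref{C:freescvor}: the free $H_0(\SCvor)$-algebra on $V=(V_\cl,V_\op)$ has underlying spaces $S(V_\cl)$ and $S^+(V_\cl)\otimes T(V_\op)$, and every element in these is manifestly a composition of the three listed generators applied to inputs from $V$. Geometrically, the path components of $\SCvor(n,m;\op)$ are indexed by the linear order of open disks on the real line (giving the regular $S_m$-representation with trivial $S_n$-action), and this combinatorial type is produced by iterated compositions of the generators.

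For injectivity I would compare the free $\mathcal{P}$-algebra with the description in Corollary \ref{C:freescvor}. Using the relations of $R_v$ as rewriting rules one obtains a normal form for elements of $\mathcal{P}(V)_\op$: the two Leibniz-like relations push every $e_{1,1}$-node below every $e_{0,2}$-node in a composition tree; the module relation collapses nested stacks of $e_{1,1}$'s into a single $e_{1,1}$ acting on a $f_2$-product of closed inputs; and the internal $S_2$-symmetry of $f_2$ together with associativity of $f_2$ sorts the closed inputs into an unordered product. Thus $\mathcal{P}(V)_\op$ surjects onto $S^+(V_\cl)\otimes T(V_\op)$ and $\mathcal{P}(V)_\cl$ onto $S(V_\cl)$. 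Composed with $\bar\phi$, these are the surjections $\mathcal{P}(V)_x \twoheadrightarrow H_0(\SCvor)(V)_x$, so both must be isomorphisms, and $\bar\phi$ is an isomorphism of operads.

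The main obstacle is verifying that this rewriting is confluent, so that no hidden cubic or higher relation is introduced in $\mathcal{P}$ beyond those already in $R_v$. Concretely, one must check a short finite list of critical-pair ambiguities among the generators, e.g.\ that the two ways of reducing $e_{1,1}\circ_1^\op(e_{0,2}\circ_1^\op e_{1,1})$ by the two Leibniz relations, followed by further normalization, land on the same normal form. Once this Diamond-Lemma-type check is performed (or equivalently a Gr\"obner-basis argument for 2-colored operads is invoked), the upper bound on $\mathcal{P}(V)$ matches the lower bound coming from Corollary \ref{C:freescvor} exactly, forcing $\bar\phi$ to be an isomorphism.
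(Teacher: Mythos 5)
Your proposal is correct, but it does more work than the paper, which offers no proof at all: the corollary is presented as a direct ``transcription'' of Corollary \ref{L:scvor_alg}, i.e.\ the presentation is read off from Voronov's identification of $H(\SCvor)$-algebras together with the knowledge of $H_0(\SCvor)(n,m;x)$ as an $S_n\times S_m$-module (which is exactly your observation that $\pi_0(\SCvor(n,m;\op))$ is the set of linear orders of the $m$ open disks, giving $\kfield[S_m]$ with trivial $S_n$-action). Your route --- build $\bar\phi:\cal F(E_v)/(R_v)\to H_0(\SCvor)$, check the five relations, prove surjectivity, and then bound $\dim$ of the quotient from above by a normal-form count --- is a legitimate and more self-contained verification that the quadratic presentation really does not collapse further. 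One simplification worth noting: the confluence/Diamond-Lemma check that you single out as ``the main obstacle'' is not actually needed in your own setup. Any hidden higher relation would only make $\cal F(E_v)/(R_v)$ \emph{smaller}, which is already excluded by surjectivity of $\bar\phi$ onto $H_0(\SCvor)$; conversely, to see that $\bar\phi$ is injective you only need the \emph{upper} bound $\dim\bigl(\cal F(E_v)/(R_v)\bigr)(n,m;\op)\leq m!$, and for that it suffices that the rewriting terminates so that the $m!$ normal forms span --- confluence then holds a posteriori. So the sandwich argument in your last sentence closes the proof without the critical-pair analysis. What the paper's approach buys is brevity (everything is delegated to Voronov's theorem and the free-algebra description of Corollary \ref{C:freescvor}); what yours buys is an explicit check that the listed $R_v$ generates \emph{all} relations, which the paper leaves implicit.
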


%
%
%
%

\subsection{Leibniz pairs}

\begin{defn}\label{D:LP} A {\sl Leibniz pair} consists in the following data: a Lie algebra $L$, an associative algebra $A$ and a Lie morphism 
$L\rightarrow \Der(A)$.
\end{defn}

From the definition of a Leibniz pair one can build the colored operad $\LP$ which is binary quadratic of the form 
$\cal F(E_{lp})/(R_{lp})$ so that  $\LP$-algebras are Leibniz pairs. The collection $E_{lp}$ has the following description:
 $$E_{lp}=\underbrace{k[\sgn]\mathfrak l_2}_{=E_{lp}(\cl,\cl;\cl)}\oplus \underbrace{k[S_2]\mathfrak n_{0,2}}_{=E_{lp}(\op,\op;\op)}\oplus
\underbrace{k[S_2]\mathfrak n_{1,1}}_{E_{lp}(\cl,\op;\op)\oplus E_{lp}(\op,\cl;\op)},$$
 where $\sgn$ denotes the signature representation.
 
The $S_3$-submodule $R_{lp}\in \cal F^{(2)}(E_{lp})$ is generated by the following relations:
\begin{itemize}
\item[(J)] The Jacobi relation $(\mathfrak l_2\circ^\cl_1 \mathfrak l_2)\cdot (\ide+(231)+(312))$,
\item[(A)] The associativity relation $\mathfrak n_{0,2}\circ^\op_1 \mathfrak n_{0,2}-\mathfrak n_{0,2}\circ^\op_2 \mathfrak n_{0,2}$,
\item[(D)] The derivation relation $\mathfrak n_{1,1}\circ^\op_1 \mathfrak n_{0,2}-(\mathfrak n_{0,2}\circ^\op_2 \mathfrak n_{1,1})\cdot (213)-\mathfrak n_{0,2}\circ^\op_1 \mathfrak n_{1,1}$,
\item[(M)] The Lie algebra morphism relation $\mathfrak n_{1,1}\circ^\cl_1 \mathfrak l_2-(\mathfrak n_{1,1}\circ^\op_1 \mathfrak n_{1,1}) \cdot (\ide-(213))$.
\end{itemize}

\begin{lem}The Koszul dual operad of $\LP$ is the operad $H_0(\SCvor)$.
\end{lem}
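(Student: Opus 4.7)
Since $\LP=\mathcal{F}(E_{lp})/(R_{lp})$ is binary quadratic, Definition~\ref{D:Koszuldual} gives $\LP^{!}=\mathcal{F}(E_{lp}^{\vee})/(R_{lp}^{\bot})$ with $E_{lp}^{\vee}=E_{lp}^{*}\otimes\sgn_{2}$. The plan is first to identify $E_{lp}^{\vee}$ with the generating 2-collection $E_{v}$ of Corollary~\ref{C:scvor_operad}, and then to match $R_{lp}^{\bot}$ with $R_{v}$ by splitting $\mathcal{F}^{(2)}$ along color profiles.

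For the generators, tensoring with $\sgn_{2}$ exchanges symmetric and antisymmetric $S_{2}$-representations: the closed--closed piece $k[\sgn]\,\mathfrak{l}_{2}$ becomes trivial, matching the commutative generator $f_{2}$; while the two regular representations $k[S_{2}]\,\mathfrak{n}_{0,2}$ and $k[S_{2}]\,\mathfrak{n}_{1,1}$ are preserved, matching $e_{0,2}$ and $e_{1,1}$. Color types are preserved, so setting $\mathfrak{l}_{2}^{\vee}=f_{2}$, $\mathfrak{n}_{0,2}^{\vee}=e_{0,2}$, $\mathfrak{n}_{1,1}^{\vee}=e_{1,1}$ identifies $E_{lp}^{\vee}\cong E_{v}$.

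To check $R_{lp}^{\bot}=R_{v}$, I split $\mathcal{F}^{(2)}(E_{lp})$ along the color profile $(n,m;x)$ of its root corolla; the Ginzburg--Kapranov pairing $\mathcal{F}^{(2)}(E_{lp})\otimes\mathcal{F}^{(2)}(E_{v})\to k$ respects this decomposition. Four profiles contribute:
\begin{itemize}
\item Profile $(3,0;\cl)$: only $\mathfrak{l}_{2}$-compositions appear and the pairing reduces to the classical statement $\Lie^{!}=\Com$, so (J) is orthogonal precisely to the associativity of $f_{2}$.
\item Profile $(0,3;\op)$: only $\mathfrak{n}_{0,2}$-compositions appear and self-duality of $\Ass$ gives that (A) is orthogonal precisely to the associativity of $e_{0,2}$.
\item Profile $(2,1;\op)$: the tree monomials are $\mathfrak{n}_{1,1}\circ_{1}^{\cl}\mathfrak{l}_{2}$ and $\mathfrak{n}_{1,1}\circ_{1}^{\op}\mathfrak{n}_{1,1}$ together with the $S_{2}$-action on the two closed inputs; the sign twist on $\mathfrak{l}_{2}$ precisely converts the Lie-morphism relation (M) into the orthogonal of the single action relation $e_{1,1}\circ_{1}^{\op}e_{1,1}-e_{1,1}\circ_{1}^{\cl}f_{2}$.
\item Profile $(1,2;\op)$: the tree monomials are $\mathfrak{n}_{1,1}\circ_{1}^{\op}\mathfrak{n}_{0,2}$ and $\mathfrak{n}_{0,2}\circ_{i}^{\op}\mathfrak{n}_{1,1}$ for $i=1,2$; the three-term derivation relation (D) pairs against the two action-associativity relations $e_{1,1}\circ_{1}^{\op}e_{0,2}-e_{0,2}\circ_{1}^{\op}e_{1,1}$ and $e_{1,1}\circ_{1}^{\op}e_{0,2}-(e_{0,2}\circ_{2}^{\op}e_{1,1})\cdot(213)$.
\end{itemize}
In each profile a dimension count matches the codimensions on both sides, so it suffices to evaluate the pairing on an explicit basis of tree monomials to conclude.

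\textbf{Main obstacle.} The closed and purely-open profiles reduce to the well-known dualities $\Lie^{!}=\Com$ and $\Ass^{!}=\Ass$, so the genuine work is in the mixed-color profiles $(2,1;\op)$ and $(1,2;\op)$. The three-term derivation relation (D) mixes compositions $\circ_{1}^{\op}$ and $\circ_{2}^{\op}$ with the transposition $(213)$, so I expect the careful bookkeeping of these permutation signs against the sign twist on $\mathfrak{l}_{2}$ to be the most delicate step, though no conceptual difficulty arises.
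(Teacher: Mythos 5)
Your proposal is correct and follows essentially the same route as the paper: identify $E_{lp}^{\vee}\cong E_v$ via the $\sgn_2$ twist, decompose $\cal F^{(2)}$ by color profile, reduce the pure profiles to $\Lie^!=\Com$ and $\Ass^!=\Ass$, and match the mixed profiles $(\cl,\cl,\op;\op)$ and $(\cl,\op,\op;\op)$ by the dimension counts $3=1+2$ and $6=2+4$. The paper's proof is exactly this argument, stated at the same level of detail.
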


\begin{proof} The operad $\LP$ is binary quadratic and we can use definition \ref{D:Koszuldual}.
The Koszul dual operad of $\LP$ is
$\LP^!=\cal F(E_{lp}^\vee)/(R_{lp}^{\bot})$. From
$$E_{lp}=k[\sgn]\mathfrak l_2 \oplus k[S_2]\mathfrak n_{0,2}\oplus
k[S_2]\mathfrak n_{1,1},$$
one gets
$$E_{lp}^\vee=kf_2 \oplus k[S_2]e_{0,2}\oplus
k[S_2]e_{1,1}=E_v.$$
The pairing between $E_v$ and $E_{lp}$ induces a pairing 
between $\cal F^{(2)}(E_{lp})$ and 
$\cal F^{(2)}(E_v)$ (see \cite[chapter7]{LodVal}).  One gets that $R_{lp}^{\bot}(\cl,\cl,\cl;\cl)$ is the orthogonal of the Jacobi relation for $\mathfrak l_2$, that is, the associativity relation for $f_2$. Similarly $R_{lp}^{\bot}(\op,\op,\op;\op)$ is the orthogonal of the associativity relation for $\mathfrak n_{0,2}$, that is,
the associativity relation for $e_{0,2}$.

The space $\cal F(E_{lp})(\cl,\cl,\op;\op)$ has dimension 3 and $R_{lp}(\cl,\cl,\op;\op)$ has dimension 1 which is relation (M).
As a consequence the dimension of $R_{lp}^{\bot}(\cl,\cl,\op;\op)$ is 2 and corresponds to  relation~(\ref{eq:C-A-module}).

The space $\cal F(E_{lp})(\cl,\op,\op;\op)$ has dimension 6 and $R_{lp}(\cl,\op,\op;\op)$ has dimension 2 which is  relation (D).
As a consequence the dimension of $R_{lp}^{\bot}(\cl,\op,\op;\op)$ is 4 and corresponds to relation~(\ref{eq:C-A}).
\end{proof}

\begin{lem}\label{L:freeLP} Let $V=(V_\cl,V_\op)$ be a pair of graded   vector space. The free $\LP$-algebra generated by $V$  has the following form:
\begin{align*}
\LP(V)_\cl=&\Lie(V_\cl), \\
\LP(V)_\op=&T(T^+(V_\cl)\otimes V_\op).
\end{align*}
The Lie algebra structure on $\Lie(V_\cl)$ is the free one, and the associative structure on $\LP(V)_\op$
is the free one. The action by derivation of $\LP(V)_\cl$ on $\LP(V)_\op$ is uniquely determined by the action of $V_\cl$ on $T^+(V_\cl)\otimes V_\op$ and is induced by the concatenation $V_\cl\otimes T^+(V_\cl)\rightarrow T^+(V_\cl)$.
\end{lem}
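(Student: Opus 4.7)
The plan is to verify that the pair $(A_\cl, A_\op) := \bigl(\Lie(V_\cl),\, T(T^+(V_\cl) \otimes V_\op)\bigr)$, equipped with the structure maps described in the statement, is an $\LP$-algebra that satisfies the universal property of the free $\LP$-algebra on $V$. By uniqueness of free objects, this identifies it with $\LP(V)$.

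First I will put an $\LP$-structure on $(A_\cl, A_\op)$. The Lie bracket on $A_\cl$ and the associative product on $A_\op$ are free by construction, so the Jacobi relation (J) and associativity (A) hold tautologically. For the action, I begin by specifying a linear map $V_\cl \to \Der(A_\op)$ sending $v \in V_\cl$ to the unique derivation of the free associative algebra $A_\op$ whose value on the generators $(w_1 \cdots w_k) \otimes u \in T^+(V_\cl) \otimes V_\op$ is the left concatenation $(v\, w_1 \cdots w_k) \otimes u$. Since $\Der(A_\op)$ is a Lie algebra under the commutator, the universal property of $\Lie(V_\cl)$ extends this map uniquely to a Lie morphism $\phi : A_\cl \to \Der(A_\op)$; this automatically enforces the derivation relation (D) (because $\phi$ lands in derivations) and the Lie morphism relation (M) (because $\phi$ is a Lie map).

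Next I verify the universal property. Given an $\LP$-algebra $(L, B, \psi)$ and linear maps $f_\cl : V_\cl \to L$, $f_\op : V_\op \to B$, the freeness of $\Lie(V_\cl)$ extends $f_\cl$ uniquely to a Lie morphism $\tilde f_\cl : A_\cl \to L$. I then set
\[
g\bigl((w_1 \cdots w_k) \otimes u\bigr) := \psi(\tilde f_\cl(w_1)) \circ \cdots \circ \psi(\tilde f_\cl(w_k))\bigl(f_\op(u)\bigr),
\]
obtaining a linear map $g : T^+(V_\cl) \otimes V_\op \to B$, which I extend by freeness of the tensor algebra to an associative morphism $\tilde f_\op : A_\op \to B$. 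Uniqueness of $(\tilde f_\cl, \tilde f_\op)$ is then forced by the freeness of $\Lie(-)$ and $T(-)$ combined with the requirement that the action be respected.

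The final step is to check that $(\tilde f_\cl, \tilde f_\op)$ is a morphism of $\LP$-algebras, i.e.\ that $\psi(\tilde f_\cl(\ell)) \circ \tilde f_\op = \tilde f_\op \circ \phi(\ell)$ for every $\ell \in A_\cl$. Both sides are $\kfield$-linear derivations of $A_\op$ valued in $B$ (regarded as an $A_\op$-bimodule via $\tilde f_\op$), so it suffices to check equality on the generators $(w_1 \cdots w_k) \otimes u$; by linearity and the Lie property of $\phi$ it suffices further to treat $\ell = v \in V_\cl$, in which case both sides collapse to $g\bigl((v\, w_1 \cdots w_k) \otimes u\bigr)$ by construction. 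The main obstacle, and the only place where the specific form of $A_\op$ really matters, is justifying that the iterated formula defining $g$ is self-consistent and that $T^+(V_\cl) \otimes V_\op$ is the correct generating space: this is a PBW-style observation, asserting that modulo (D) and (M) every iterated application of actions $\phi(\ell_i)$ on a single $u \in V_\op$ can be rewritten uniquely as an element of the left $T^+(V_\cl)$-module $T^+(V_\cl) \otimes V_\op$, with no relations beyond the free associative structure on $T^+(V_\cl)$.
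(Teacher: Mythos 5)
Your proposal is correct and follows the same route as the paper: exhibit the candidate pair as an $\LP$-algebra and verify the universal property, with $\tilde f_\cl$ forced by freeness of $\Lie(V_\cl)$ and $\tilde f_\op$ forced on the generators $T^+(V_\cl)\otimes V_\op$ by the intertwining condition. The only substantive difference is packaging: the paper disposes of your iterated formula for $g$ in one line by observing that $T^+(V_\cl)\otimes V_\op$ is the \emph{free} $\Lie(V_\cl)$-module generated by $V_\op$, since the universal enveloping algebra of $\Lie(V_\cl)$ is $T(V_\cl)$; your explicit composite $\psi(\tilde f_\cl(w_1))\circ\cdots\circ\psi(\tilde f_\cl(w_k))(f_\op(u))$ is exactly the resulting module map written out. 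One remark on your closing paragraph: the ``PBW-style obstacle'' you flag is not actually an obstacle in the argument as you have set it up. The map $g$ is defined on a basis of the concretely given vector space $T^+(V_\cl)\otimes V_\op$ (words in a basis of $V_\cl$ tensored with a basis of $V_\op$), so there is no self-consistency to verify; and once the universal property is established for your explicit model, that model \emph{is} the free $\LP$-algebra by uniqueness of universal objects --- no separate rewriting argument showing that iterated actions collapse onto $T^+(V_\cl)\otimes V_\op$ with no extra relations is needed. That concern would only arise if one tried instead to compute the quotient of the free algebra over the free operad by the ideal of relations directly.
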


\begin{proof}  We prove that the structure defined above satisfies the universal property with respect to the Leibniz pair structure. Let $(L,A)$ be a Leibniz pair. Let us denote by $l:L\otimes L\rightarrow L$ the Lie bracket, by $\mu:A\otimes A\rightarrow A$ the associative product and by 
$\alpha:L\otimes A\rightarrow A$ the action of $L$ on $A$. Let $\phi:V_\cl\rightarrow L$ and $\psi:V_\op\rightarrow A$ be two linear maps. One has to build a unique pair
$(\tilde\phi:\Lie(V_\cl)\rightarrow L,\tilde\psi:\LP(V)_\op\rightarrow A)$ such that $\tilde\phi$ is a morphism of Lie algebras 
extending $\phi$, such that  $\tilde\psi$ is a morphism of associative algebras extending $\psi$ and such that the following diagram is commutative:
$$\xymatrix{ \LP(V)_\cl\otimes \LP(V)_\op 
\ar[d]_{\tilde\phi\otimes\tilde\psi}\ar[r]^>>>>>{\mathfrak n_{1,1}} & \LP(V)_\op\ar[d]^{\tilde\psi} \\
L\otimes A \ar[r]^{\alpha} & A}$$

Since $\LP(V)_\cl$ is the free Lie algebra generated by $V_\cl$, $\tilde\phi$ is the unique morphism of Lie algebras extending $\phi$. Since $\LP(V)_\op$ is the free associative algebra generated by $T^+(V_\cl)\otimes V_\op$ the morphism $\tilde\psi$ is uniquely determined by its value on $T^+(V_\cl)\otimes V_\op$.
Besides $T^+(V_\cl)\otimes V_\op$ is the free $\Lie(V_\cl)$-module generated by $V_\op$ for the universal enveloping algebra of $\Lie(V_\cl)$ is  $T(V_\cl)$ and the action on $T^+(V_\cl)\otimes V_\op$ is defined as the free action. Consequently $\tilde\psi$ is uniquely determined by its value on $1\otimes V_\op$ which is $\psi$. The commutativity of the diagram follows.
\end{proof}

%
%
%
%

\subsection{The operad $\LP$ is Koszul}\label{S:LPKoszul}

There are different methods for proving that an operad $\cal P$ is Koszul. One of them is to compute the $\cal P$-homology of a free 
$\cal P$-algebra and prove that it is concentrated in degree 0. Though this method is not always the more efficient, it has the advantage in our case to recover the (co)homology  theory for Leibniz pairs as defined by Flato, Gerstenhaber and Voronov in \cite{FGV95}.

\medskip
Let $\cal P$ be a binary quadratic operad and $A$ be a $\cal P$-algebra.
  The $\cal P$-homology of $A$, denoted by $H_*^{\cal P}(A)$, is the homology of the complex $((\cal P^!)^c(sA),\partial)$, where $(\cal P^!)^c(sA)$ is the free  $\cal P^!$-coalgebra cogenerated by the suspension of $A$, and 
  $\partial$ is the unique coderivation of 
$\cal P^!$-coalgebra extending the $\cal P$-algebra structure of $A$. Thus, given a Leibniz pair
$(L,A)$, the complex computing $H_*^{\LP}(L,A)$ is
\[ C_*(L,A)=({\LP^!}^c(s(L,A)),\partial)=(H_0(\SCvor)^c(s(L,A)),\partial). \] 
In proposition \ref{P:homology_LP} we recognize the complex introduced in \cite{FGV95} and in theorem \ref{T:LPKoszul} we prove that the operad $\LP$ is Koszul.

\medskip
The first step of our study is to describe  the inverse map of the bijection
$$\Coder(H_0(\SCvor)^c(V))\rightarrow \Maps(H_0(\SCvor)^c(V),V),$$
when $V=(V_\cl,V_\op)$ is a pair of vector spaces.
Recall that $H_0(\SCvor)^c(V)_\cl=S^c(V_\cl)$ and $H_0(\SCvor)^c(V)_\op=(S^c)^+(V_\cl)\otimes T^c(V_\op)$
where $S^c$ denotes the free cocommutative functor and $T^c$ the free coassociative functor. Denote by $\bar v_{[n]}$
the element $\overline{v_1\otimes\ldots\otimes v_n}$ in $S^c(V)$.
For any subset
$A=\{a_1,\ldots,a_k\}\subset \{1,\ldots,n\}$ the element  $\overline{v_{a_1}\otimes \ldots\otimes  v_{a_k}}\in S^c(V)$ is denoted by $\bar v_A$. For $w_1,\ldots,w_m\in W$ and $1\leq k\leq l\leq m$, the element $w_k\otimes\ldots\otimes w_l$ of $T^c(W)$
is denoted by $w_{(k;l)}$.

\begin{lem} \label{L:liftcoder}
Let $V=(V_\cl,V_\op)$ be a pair of vector spaces. The inverse map of the bijection
$$\Coder(H_0(\SCvor)^c(V))\rightarrow \Hom(H_0(\SCvor)^c(V),V)$$
has the following form. For $\psi: H_0(\SCvor)^c(V)_\cl\rightarrow V_\cl$ and $\phi:  H_0(\SCvor)^c(V)_\op\rightarrow V_\op$
the $H_0(\SCvor)^c(V)$-coderivations $\tilde\psi:  H_0(\SCvor)^c(V)_\cl\rightarrow H_0(\SCvor)^c(V)_\cl$ and 
$\tilde\phi: H_0(\SCvor)^c(V)_\op\rightarrow H_0(\SCvor)^c(V)_\op$ are described by

\begin{equation}\label{E:derclose}
 \tilde\psi(\bar v_{[p]})=\sum_{A\sqcup B=\{1,\ldots,p\},\atop{ A\not=\emptyset}} \psi(\bar v_A)\bar v_B,
\end{equation}

\begin{multline}\label{E:deropen}
 \tilde\phi(\bar v_{[p]}\otimes w_{(1;q)})=\sum_{A\sqcup B=\{1,\ldots,p\},\atop{A\not=\emptyset}}
\psi(\bar v_A)\bar v_B\otimes w_{(1;q)}+\\
\sum_{A\sqcup B=\{1,\ldots,p\}, \atop{0\leq i<j\leq q}} 
\bar v_A\otimes (w_{(1;i)}\phi(\bar v_B\otimes w_{(i+1;j)})w_{j+1;q}).
\end{multline}

\end{lem}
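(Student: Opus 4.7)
The plan is to invoke the general fact that coderivations of a cofree coalgebra over an operad are in bijection with linear maps to the cogenerators, and then to verify that the stated formulas define the inverse. Concretely, for $\cal P = H_0(\SCvor)$, a $\cal P$-coderivation $D$ of $\cal P^c(V)$ is uniquely determined by $\pi \circ D$, where $\pi : \cal P^c(V) \to V$ picks off the cogenerators; conversely any such map lifts to a unique coderivation. Thus it suffices to (i) check the stated maps are coderivations, and (ii) check they project correctly onto $V$. Uniqueness then comes for free from the injectivity of $\Coder(\cal P^c(V)) \to \Hom(\cal P^c(V), V)$.

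First I identify the cooperations on $\cal P^c(V)$ by dualizing the free algebra description of Corollary \ref{C:freescvor}. The closed part $S^c(V_\cl)$ carries the standard cocommutative coproduct
\[
\Delta_\cl(\bar v_{[p]}) = \sum_{A \sqcup B = \{1,\ldots,p\}} \bar v_A \otimes \bar v_B,
\]
while the open part $(S^c)^+(V_\cl) \otimes T^c(V_\op)$ carries two cooperations: a coassociative coproduct $\Delta_\op$ dual to the concatenation product on $S^+(V_\cl)\otimes T(V_\op)$ (it splits the symmetric factor into two subsets and cuts the associative word into two contiguous pieces), and a coaction $\rho^\vee : (S^c)^+(V_\cl)\otimes T^c(V_\op) \to S^c(V_\cl) \otimes ((S^c)^+(V_\cl)\otimes T^c(V_\op))$ dual to the module action $\rho$.

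Next I match the two sums in formula \eqref{E:deropen} with these cooperations, which is the main conceptual content of the lemma. Formula \eqref{E:derclose} is the classical cofree cocommutative lift: apply $\psi$ to an arbitrary nonempty subset $A$ and pair with the complementary cogenerators. Its coderivation identity $\Delta_\cl \tilde\psi = (\tilde\psi \otimes \ide + \ide \otimes \tilde\psi)\Delta_\cl$ is the standard shuffle computation. For \eqref{E:deropen}, the \emph{first} sum, indexed by a subset $A$ of the closed positions, reinserts $\psi(\bar v_A)$ into the $S^c$-factor of the open piece; this is the lift forced by the coaction compatibility $\rho^\vee\tilde\phi = (\tilde\psi \otimes \ide + \ide \otimes \tilde\phi)\rho^\vee$. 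The \emph{second} sum, indexed by a partition $A \sqcup B$ and a contiguous interval $(i,j]$, inserts $\phi(\bar v_B \otimes w_{(i+1;j)})$ into position $i{+}1$ of the associative word and keeps $A$ on the closed side; this is the lift forced by the coassociativity compatibility $\Delta_\op \tilde\phi = (\tilde\phi \otimes \ide + \ide \otimes \tilde\phi)\Delta_\op$. Both identities reduce to direct combinatorial comparisons: each term on either side of the identity corresponds to a unique choice of how to further subdivide the already chosen subset and interval, and the two sides match by reindexing.

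Finally I check projections. On $\bar v_{[p]}$, only the term $A = \{1,\ldots,p\}$, $B = \emptyset$ in \eqref{E:derclose} lies in $V_\cl$, giving $\psi(\bar v_{[p]})$. On $\bar v_{[p]} \otimes w_{(1;q)}$, the first sum of \eqref{E:deropen} always contains a nonempty closed factor $\psi(\bar v_A) \bar v_B$ and so contributes nothing in degree $(0,1)$; only the term $A = \emptyset, B = \{1,\ldots,p\}, i = 0, j = q$ of the second sum lies in $1 \otimes V_\op = V_\op$, giving $\phi(\bar v_{[p]} \otimes w_{(1;q)})$. The hard part of the proof is purely bookkeeping: keeping the two types of cuts for $\tilde\phi$ separate and verifying that each cooperation identity is satisfied exactly once, without double-counting. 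Once this combinatorial matching is in place, the result follows from the universal property of the cofree $\cal P$-coalgebra.
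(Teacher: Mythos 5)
Your proposal is correct and follows essentially the same route as the paper: both reduce the claim, via the uniqueness of lifts to the cofree coalgebra, to checking the projections onto $V$ and verifying the coderivation identities with respect to the two dual cooperations (the coproduct $\Delta_\op$ dual to the associative product and the coaction dual to the module structure from Corollary \ref{C:freescvor}), each by a direct combinatorial comparison. The only cosmetic difference is that you spell out the projection check and heuristically attribute each sum of \eqref{E:deropen} to one cooperation, whereas the paper simply records the two compatibility relations as direct computations.
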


\begin{proof} The projection of $\tilde\psi$ onto $V_\cl$ is $\psi$ and the projection of
$\tilde\phi$ onto $V_\op$ is $\phi$. Hence it is enough to prove that $(\tilde\psi,\tilde\phi)$ is 
a coderivation of the $2$-colored coalgebra $(H_0(\SCvor)^c(V))$. The map $\tilde\psi$ is a coderivation of the cofree cocommutative coalgebra $S^c(V_\cl)$. Let us prove that $\tilde\phi$ is a coderivation of the coassociatvie coalgebra $(H_0(\SCvor)^c(V))_\op=(S^c)^+(V_\cl)\otimes T^c(V_\op)$. Let
$$\begin{array}{lccc}
\Delta_\op:&(H_0(\SCvor)^c(V))_\op& \rightarrow&
 (H_0(\SCvor)^c(V))_\op\otimes (H_0(\SCvor)^c(V))_\op \\
&\bar v_{[p]}\otimes w_{(1;q)}&\mapsto &\sum\limits_{A\sqcup B=[p], \atop{1\leq i\leq q-1}} (\bar v_A\otimes w_{(1;i)})\otimes
(\bar v_B\otimes w_{(i+1;q)})
\end{array}$$
be the dual of the associative product defined in corollary
\ref{C:freescvor}. The relation $\Delta_\op\tilde\phi=(\tilde\phi\otimes 1+1\otimes\tilde\phi)\Delta_\op$ is a direct computation.
Let 
$$\begin{array}{lccc}
\gamma_{\cl,\op}:& (H_0(\SCvor)^c(V))_\op& \rightarrow&  (H_0(\SCvor)^c(V))_\cl\otimes (H_0(\SCvor)^c(V))_\op\\
&\bar v_{[p]}\otimes w_{(1;q)}&\mapsto&\sum\limits_{A\sqcup B=[p],\atop{A\not=\emptyset}} \bar v_A\otimes
(\bar v_B\otimes w_{(1;q)})
\end{array}$$
be the dual of the module structure defined in corollary \ref{C:freescvor}.
The relation $\gamma_{\cl,\op}\tilde\phi=(\tilde\psi\otimes 1+1\otimes\tilde\phi)\gamma_{\cl,\op}$  is also a direct computation.
\end{proof}


\begin{prop}\label{P:homology_LP} Let $(L,A)$ be a Leibniz pair. Let $C_*(L,A)$ be the complex computing the homology $H_*^{\LP}(L,A)$.
The closed component $C_*(L,A)_\cl$ is the Chevalley-Eilenberg complex $C_*^{CE}(L;\kfield)$.  
The open component $C_*(L,A)_\op$ is the bicomplex
$$C_{p,q}(L,A)_\op=C_p^{CE}(L,C_q^{\Hoch}(A)),$$
where $C_q^{\Hoch}(A)$ is the Hochschild complex computing the Hochschild homology of the non-unital associative algebra $A$ with coefficients in $\kfield$.
\end{prop}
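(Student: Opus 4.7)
The plan is first to identify the underlying graded vector spaces of $C_*(L,A)$, and then to compute the coderivation $\partial$ explicitly via Lemma~\ref{L:liftcoder}. Since $\LP^{\ac} = H_0(\SCvor)$ and $V = s(L,A)$, dualizing the free-algebra description of Corollary~\ref{C:freescvor} gives
$$C_*(L,A)_\cl = S^c(sL), \qquad C_*(L,A)_\op = (S^c)^+(sL) \otimes T^c(sA),$$
which are the underlying graded vector spaces of $C_*^{CE}(L;\kfield)$ and $\bigoplus_{p,q} C_p^{CE}(L; C_q^{\Hoch}(A))$ respectively.

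The $\LP$-algebra structure on $(L,A)$ is encoded by three weight-two maps: the Lie bracket $\psi : (sL)^{\otimes 2} \to sL$, the action $\phi_{\mathrm{act}} : sL \otimes sA \to sA$, and the product $\phi_{\mathrm{mul}} : (sA)^{\otimes 2} \to sA$; all higher-weight components vanish. Substituting $\psi$ into formula~(\ref{E:derclose}) immediately produces the Chevalley-Eilenberg boundary on $S^c(sL)$, which settles the closed assertion. For the open component, I substitute the three maps into formula~(\ref{E:deropen}); because of the weight-two condition, only a few types of term survive in the second sum, classified by $(|B|,\,j-i) \in \{(2,0),(1,1),(0,2)\}$, the first of which is vacuous because $i<j$. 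The first sum of (\ref{E:deropen}), collecting the $\psi$-contributions, yields the CE-bracket part acting on the $S^c(sL)$-factor; the $(1,1)$-case inserts $\phi_{\mathrm{act}}$ at each of the $q$ positions of $T^c(sA)$, giving the action of $L$ on the tensor algebra by derivations, and combining it with the bracket part yields the full CE-differential with coefficients in the $L$-module $T^c(sA)$; the $(0,2)$-case reproduces exactly the Hochschild boundary on $T^c(sA)$. The fact that the horizontal (CE) and vertical (Hochschild) directions anticommute, giving a genuine bicomplex, is precisely relation (D) of $\LP$ (the derivation compatibility between the Lie action and the associative product), which was encoded into the operad at the outset.

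The main obstacle I anticipate is not conceptual but a matter of careful sign bookkeeping: tracking the Koszul signs coming from the suspensions in $V=s(L,A)$ and from the coderivation-lifting formulas of Lemma~\ref{L:liftcoder}, so that the output matches the sign conventions of the complex in \cite{FGV95} on the nose. Everything else reduces to direct substitution into (\ref{E:derclose}) and (\ref{E:deropen}).
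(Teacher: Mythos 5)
Your proposal is correct and follows essentially the same route as the paper: both identify the underlying spaces as $S^c(sL)$ and $(S^c)^+(sL)\otimes T^c(sA)$, substitute the weight-two structure maps of the Leibniz pair into formulas (\ref{E:derclose}) and (\ref{E:deropen}) of Lemma \ref{L:liftcoder}, and recognize the resulting terms as the Chevalley--Eilenberg and Hochschild differentials. Your classification of the surviving terms by $(|B|, j-i)$ and the attribution of the anticommutation to relation (D) are accurate refinements of what the paper writes out explicitly.
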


\begin{proof} Denote by $[-,-]$ the Lie bracket of $L$  as well as the Lie action on $A$.
The Leibniz pair structure on $V=(L,A)$ induces a natural map $H_0(\SCvor)^c(sV)\rightarrow sV$ which is given componentwise by
$$\begin{array}{llccl}
&\phi:& S^c(sL)&\rightarrow& sL\\
& & & &\\
&& \overline{sl_1\otimes sl_2\ldots\otimes sl_n}&\mapsto& \begin{cases}  s[l_1,l_2], &\ \textrm{if}\ n=2, \\
0, & \ \textrm{otherwise}, \end{cases} \\
&&&&\\
\text{and}& & & & \\
&&&&\\
&\psi:& (S^c)^+(sL)\otimes T^c(sA)& \rightarrow &sA \\
& & & & \\
&&\overline{sl_1\otimes\ldots\otimes sl_n}\otimes sa_1\otimes\ldots sa_k&\mapsto &
\begin{cases} s[l_1,a_1],& \ \textrm{if}\ n=k=1, \\
s(a_1\cdot a_2), & \ \textrm{if}\ n=0 \text{ and } k=2, \\
0, &  \ \textrm{otherwise}. 
\end{cases}
\end{array}
$$
Since $L$ and $A$ are concentrated in degree $0$, then $sL$ and $sA$ are 
concentrated in degree 1. Hence one can use the isomorphism $S^n(sL)\simeq \Lambda^n(L)$.
Formula (\ref{E:derclose}) gives the differential on the closed component of the complex
$d_\cl: C_n(L,A)_\cl=\Lambda^n(L)\rightarrow \Lambda^{n-1}(L),$ by

$$d_\cl(l_1\wedge\ldots\wedge l_n)=\sum_{1\leq i<j\leq n} (-1)^{i+j-1} [l_i,l_j]\wedge l_1\ldots\wedge \hat l_i\wedge\ldots\wedge \hat l_j\wedge\ldots\wedge l_n$$
which is the complex computing the Chevalley-Eilenberg homology of the Lie algebra $L$ with trivial coefficients.

Formula (\ref{E:deropen}) gives the differential on the open component of the complex
$d_\op: C_n(L,A)_\op=\oplus_{p+q=n}\Lambda^p(L)\otimes T^q(A)\rightarrow C_{n-1}(L,A)_\op$ by

\begin{multline*}
d_\op(l_1\wedge\ldots\wedge l_p\otimes a_1\otimes\ldots\otimes a_q)=
\sum_{1\leq i<j\leq p} (-1)^{i+j-1} [l_i,l_j]\wedge l_1\ldots\wedge \hat l_i\wedge\ldots\wedge \hat l_j\wedge\ldots\wedge l_p\otimes a_{(1;q)} + \\
+ \sum_{i=1}^p\sum_{j=1}^q     (-1)^{p-i+j-1} l_1\ldots\wedge \hat l_i\wedge\ldots\wedge l_p\otimes( a_1\otimes\ldots a_{j-1}\otimes [l_i,a_j]\otimes\ldots\otimes a_q) \\
+\sum_{i=1}^{q-1} (-1)^{p+i} l_{[p]}\otimes a_1\otimes\ldots \otimes a_i\cdot a_{i+1}\otimes\ldots a_q
\end{multline*}

\end{proof}

\begin{thm}\label{T:LPKoszul} The operad $\LP$ is Koszul, so is the operad $H_0(\SCvor)$.
\end{thm}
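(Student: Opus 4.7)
By the general strategy mentioned just before Proposition~\ref{P:homology_LP}, the plan is to compute $H_*^{\LP}(\LP(V))$ for an arbitrary pair of vector spaces $V=(V_\cl,V_\op)$ and verify that it has the shape expected for a Koszul operad applied to a free algebra, namely $\kfield$ concentrated in degree $0$ and $V$ concentrated in degree $1$, with all higher degrees vanishing. By Lemma~\ref{L:freeLP} we have $\LP(V)=(\Lie(V_\cl),\,T(T^+(V_\cl)\otimes V_\op))$; by Proposition~\ref{P:homology_LP}, the closed component of $C_*(\LP(V))$ is the Chevalley--Eilenberg complex $C_*^{CE}(\Lie(V_\cl);\kfield)$, while the open component is the total complex of the bicomplex $C_p^{CE}(\Lie(V_\cl);C_q^{\Hoch}(T(W)))$ where $W := T^+(V_\cl)\otimes V_\op$.

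The closed computation is classical. Via the PBW isomorphism $U(\Lie(V_\cl))\cong T(V_\cl)$ and the length-one Koszul resolution of $\kfield$ over $T(V_\cl)$, one obtains $H_*^{CE}(\Lie(V_\cl);\kfield) = \mathrm{Tor}_*^{T(V_\cl)}(\kfield,\kfield)$, which is $\kfield$ in degree $0$, $V_\cl$ in degree $1$, and zero elsewhere; this encodes the well-known Koszulness of the $\Lie$ operad. For the open part, I would run the spectral sequence of the filtration by the Hochschild degree, computing the Hochschild differential first. Since $T(W)$ is the free non-unital associative algebra on $W$, the analogous short Koszul resolution of $\kfield$ over the unital algebra $T^+(W)$ gives $H_*^{\Hoch}(T(W);\kfield) = \kfield$ in Hochschild degree $0$ and $W$ in Hochschild degree $1$, vanishing higher up; thus the $E_1$ page has only two nonzero rows.

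By Lemma~\ref{L:freeLP}, the $\Lie(V_\cl)$-action on $W = U(\Lie(V_\cl))\otimes V_\op$ is the free left $U(\Lie(V_\cl))$-module structure (with generators $V_\op$), and since the derivation action of $\Lie(V_\cl)$ on $T(W)$ was extended precisely from this action on $W$, the induced action on $H_1^{\Hoch}(T(W);\kfield) = W$ is again this free module action. Hence the $E_2$ page reads: in Hochschild degree $0$, the CE homology of $\Lie(V_\cl)$ with trivial coefficients, giving $\kfield$ at $(p,q)=(0,0)$ and $V_\cl$ at $(1,0)$; in Hochschild degree $1$, the CE homology with coefficients in a free $U(\Lie(V_\cl))$-module, concentrated in CE-degree $0$ and equal to $V_\op$. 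No further differentials are possible for degree reasons, so the spectral sequence collapses at $E_2$, yielding total homology $\kfield$ in degree $0$ and $V_\cl\oplus V_\op = V$ in degree $1$. This proves Koszulness of $\LP$. Since Koszulness is invariant under the Koszul-duality involution on quadratic operads, $H_0(\SCvor) = \LP^!$ is Koszul as well.

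The delicate step I expect to have to verify carefully is the identification of the $\Lie(V_\cl)$-module structure induced on $H_1^{\Hoch}(T(W);\kfield)=W$ with the free left $U(\Lie(V_\cl))$-module structure on $W$; this amounts to tracking the concatenation action from Lemma~\ref{L:freeLP} through the short Koszul resolution of $\kfield$ over $T^+(W)$ and checking that the derivation extension on $T(W)$ descends correctly to the quotient $W = T(W)/T(W)^2$. The rest of the argument assembles standard acyclicity results.
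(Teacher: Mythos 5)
Your proposal is correct and follows essentially the same route as the paper: compute the $\LP$-homology of the free $\LP$-algebra via the Chevalley--Eilenberg complex on the closed side and the bicomplex $C_p^{CE}(\Lie(V_\cl),C_q^{\Hoch}(T(T^+(V_\cl)\otimes V_\op)))$ on the open side, use that the Hochschild homology of a free associative algebra is concentrated on its generators with the induced free $\Lie(V_\cl)$-module structure, and let the spectral sequence collapse. The only discrepancy is a harmless indexing convention (reduced versus unreduced complexes), and the step you flag as delicate --- identifying the induced module structure on $H^{\Hoch}$ of $T(W)$ with the free module structure on $W=T^+(V_\cl)\otimes V_\op$ --- is asserted in the paper in exactly the same way.
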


\begin{proof} Consider the free $\LP$-algebra generated by $V=(V_\cl,V_\op)$ given in lemma \ref{L:freeLP}: $\LP(V)_\cl=\Lie(V_\cl)$ and $\LP(V)_\op=T(T^+(V_\cl)\otimes V_\op)$.
The homology of the complex $C_*(\cal LP(V))_\cl$ is concentrated in degree $0$ with value $V_\cl$, because it is the Chevalley-Eilenberg homology of the free Lie algebra $\Lie(V_\cl)$.
Let us compute the homology of the bicomplex
$$C_{p,q}\left(\LP(V)\right)_\op=C^{CE}_{p}\left(\Lie(V_\cl),C_q^{\Hoch}(\LP(V)_\op)\right).$$
Since $\LP(V)_\op$ is a free associative algebra its Hochschild homology is concentrated in degree $0$ with value $M=T^+(V_\cl)\otimes V_\op$. The induced Lie-module structure is the free Lie module structure.
Consequently $H_*^ {CE}(\Lie(V_\cl);T^+(V_\cl)\otimes V_\op)$ is concentrated in degree $0$ with value $V_\op$. The spectral sequence converging to $H_*(\LP(V))_\op$ collapses at page 2 and 
$H_0(\LP(V))_\op=V_\op, \ H_n(\LP(V))_\op=0, n>0$. Since $H_0(\SCvor)$ is the Koszul dual operad associated to $\LP$ it is also Koszul. \end{proof}

%
%
%
%

\section{Strong Homotopy Leibniz Pairs} \label{shlp}
Because $\LP$ is Koszul and $\LP^! = H_0(\SCvor)$, it follows from the theory that the 
cobar construction of the cooperad $\LP^{\ac}=(\Lambda H_0(\SCvor))^*$ (see definition \ref{D:Koszuldual}), is quasi-isomorphic to $\LP$. 
Namely, one has the quasi-isomorphism $$\Omega[(\Lambda H_0(\SCvor))^*]\xrightarrow{\sim} \LP,$$ where $\Omega(C)$, the cobar construction of a cooperad $C$, is the free operad 
generated by $s^{-1}\overline{C}$ with differential  defined as the unique derivation extending the cooperad product in $C$. We will denote the operad $\Omega[(\Lambda H_0(\SCvor))^*]$ by 
$\LP_\infty$. Algebras over this operad
are called {\sl Strong Homotopy Leibniz Pairs} (or SHLP for short).

In this section we recognize SHLP as pair $(L,A)$ where $L$ is an $L_\infty$-algebra and $A$ is an $A_\infty$-algebra
with an $L_\infty$-morphism $L \to C^{>0}(A,A)[1]$, where $C^{>0}(A,A)$, the truncated deformation complex of $A$ has been defined in the introduction and in \cite{KS06a}.

Consequently SHLP is the minimal model of  Leibniz pairs as suggested by Martin Markl in the appendix of \cite{KS06a}.

%
%
%
%

\subsection{Description of $\LP_\infty$ in terms of trees}\label{S:LPinfinity}   From corollary \ref{C:freescvor} one gets
 that the 2-collection structure of $\overline{H_0(\SCvor)}$ is such that $H_0(\SCvor)(n,0;\cl)$
is the trivial representation $\kfield$ of $S_n$, for $n > 1$ and $H_0(\SCvor)(p,q;\op)$ is the tensor product 
$\kfield \otimes \kfield[S_q]$ of the trivial representation of $S_p$ and the regular representation of $S_q$, 
for $p+q > 1$ and $q>0$.

A {\it Partially Planar Tree} is  an isotopy class of 
trees embedded in the Euclidean 3 dimensional space $\R^3$ such that the straight (or planar) edges are 
constrained to be in a fixed plane, say the $xy$-plane. The planar edges will also be called {\it open}  
while the wiggly (or spatial) edges will be called {\it closed}. 
We will say that a spatial (resp. planar) edge has color closed (resp. open).
In the next figure we show the partially planar corollae that are relevant to our operad.
The action of the symmetric group is given by reordering the labels.

\renewcommand{\fst}{{}\save[] +<-28pt,4pt>*{\txt{\tiny 1}}+<20pt,0pt>*{\txt{\tiny 2}}+<18pt,0pt>*{\cdots}+<18pt,0pt>*{\txt{\tiny\it n}} \restore}
\begin{equation*} \raisebox{20pt}{
 \raisebox{-1.5em}{$\mathfrak l_n    \ =\ $} \Closedcorolla{\fst}{~} \ \raisebox{-20pt}{ , } \hskip 20pt    
 \raisebox{-1.5em}{$\mathfrak n_{p,q}\ =\ $} 
 \Rootedoccorolla{\txt{\tiny 1}}{\txt{\tiny 2}}{\txt{\tiny\it p}}{\txt{\tiny 1}}{\txt{\tiny 2}}{\txt{\tiny\it q}}.
}
\end{equation*}
The vector space generated by the above trees with $n > 1$, $p+q > 1$ and $q > 0$ defines a 2-collection isomorphic
to $\overline{H_0(\SCvor)}$.

Since $\LP_\infty$ is the free operad generated by $s^{-1} (\Lambda\overline{H_0(\SCvor}))^*$, it follows that $\LP_\infty$ is generated by the above corollae $\mathfrak l_n$ and $\mathfrak n_{p,q}$ for all $n \geqslant 2$, $p + q \geqslant 2$ and $q > 0$, of degree $n-2$ and $p+q-2$. 

The symmetric group action on $\LP_\infty$ is given by reordering labels and multiplying by the signature of the permutation. 
The grading on $\LP_\infty$ is defined as follows. For each tree $T \in \LP_\infty$, its degree is $|T| =  \# l - \# i - 2$, 
where $\# l$ denotes the number of leaves and $\# i$ denotes the number internal edges of $T$. 
Notice that  $|T \circ_i^x S| = |T| + |S| $ for any label $i$ and color $x$.

The differential  on $\LP_\infty$ is the unique derivation extending the cooperad structure of $(\Lambda H_0(\SCvor))^*$. 
It is not difficult to check that it coincides with the  vertex expansion operator on partially planar trees:
\[
 d T = \sum_{T = T'/e} \pm T'. 
\]
Taking into account the operadic suspension $\Lambda$, the signs are given explicitly in the following formulae.

\renewcommand{\snd}[1]{{}\save[] +<5.4pt,32pt>*{\overbrace{\hspace{16pt}}^{#1}} \restore }   
\renewcommand{\trd}[1]{{}\save[] +<-8pt,4.4pt>*{\closedcorolla{\overbrace{\hspace{32pt}}^{#1}}{}} \restore}   

\begin{equation}\label{LP:diff1} 
d \ \raisebox{20pt}{\closedcorolla{}{~}} \ = \sum_{[p] = I_1 \sqcup I_2\atop{|I_1|>1,|I_2|>0}}  (-1)^{|\sigma|} 
                                             \hskip 15pt \closedcorolla{\trd{I_1}\snd{I_2}}{~}
\end{equation}

and

\renewcommand{\fth}{{}\save[] +<0pt,11pt>*{\occorolla{I_2}{\txt{\tiny {\it i}+1}}{\txt{\tiny {\it j}}}} \restore}
\renewcommand{\fif}{{}\save[] +<0pt,6.5pt>*{\closedcorolla{\overbrace{\hspace{32pt}}^{I_1}}{}} \restore}

\begin{multline}\label{LP:diff2}
d \hskip -10pt \raisebox{20pt}{
\Rootedoccorolla{\txt{\tiny 1}}{\txt{\tiny 2}}{\txt{\tiny\it p}}{\txt{\tiny 1}}{\txt{\tiny 2}}{\txt{\tiny\it q}}
} = \hskip -10pt 
\sum_{\begin{array}{c} \scriptstyle{[p] = I_1 \sqcup I_2}        \\[-3pt] 
                       \scriptstyle{0 \leqslant i \leqslant q-1} \\[-3pt] 
                       \scriptstyle{i+1 \leqslant j \leqslant q}
      \end{array}}  \hskip -10pt (-1)^{|\sigma| + i + |I_1| + i|I_2|}
\hskip -5pt 
\raisebox{20pt}{
\openedge{I_1}{\txt{\tiny 1}}{\txt{\tiny \it i}}{\fth}{\txt{\tiny {\it j}+1}}{\txt{\tiny \it q}}{-}
}  + \\[-2em] +
\sum_{[p] = I_1 \sqcup I_2\atop{|I_1]>1}} (-1)^{|\sigma|} \hskip -10pt 
\raisebox{20pt}{ 
\Rootedoccorolla{\fif}{\hskip 15pt\overbrace{\hspace{20pt}}^{I_2}}{}{\txt{\tiny 1}}{\txt{\tiny 2}}{\txt{\tiny\it q}}
}
\end{multline}
where $\sigma \in S_p$ is the unshuffle partitioning $[p]$ into $I_1 \sqcup I_2$ with ordered subsets $I_1$ and $I_2$
and $|I_j|$ denotes the number of elements in $I_j$.


From the Koszulity of $\LP$ it follows that the counit map $\chi : \LP_\infty \to \LP$ taking $\mathfrak l_2$,
$\mathfrak n_{1,1}$ and $\mathfrak n_{0,2}$ to the corresponding generators in $\LP$ and the remaining corollae 
to zero is a quasi-isomorphism of operads.

Comparing with definition 10 in \cite{KS06a} one gets

\begin{cor}\label{C:SHLPvsALinfty}
 SHLP are $A_\infty$-algebras over $L_\infty$-algebras, where SHLP are algebras over the minimal resolution of $\LP$.
\end{cor}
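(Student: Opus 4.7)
The plan is to unwind what a $\LP_\infty$-algebra structure on a pair $(V_\cl, V_\op) =: (L, A)$ amounts to, using the explicit description of $\LP_\infty$ as a free operad on corollae given in section \ref{S:LPinfinity} together with the differential formulas \eqref{LP:diff1} and \eqref{LP:diff2}, and then match the resulting data with Kajiura--Stasheff's definition 10 in \cite{KS06a}.

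First I would observe that a morphism of graded $2$-colored operads $\LP_\infty \to \End_{(L,A)}$ is the same as a pair of families of operations
\[
\ell_n : \Lambda^n L \to L, \qquad m_{p,q} : \Lambda^p L \otimes A^{\otimes q} \to A,
\]
of degrees $n-2$ and $p+q-2$ respectively, for $n \geq 2$, $p+q \geq 2$, $q \geq 1$, since $\LP_\infty$ is freely generated by the corollae $\mathfrak l_n$ and $\mathfrak n_{p,q}$ and the symmetric group acts on the closed inputs through the sign representation (coming from the $\Lambda$ operadic suspension in $(\Lambda H_0(\SCvor))^*$).

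Next I would impose compatibility with the differential on both sides. On the source, the differential is given by vertex expansion of trees as in \eqref{LP:diff1} and \eqref{LP:diff2}; on the target, it comes from the internal differentials of $L$ and $A$. Applying this compatibility to the three families of corollae gives:
\begin{enumerate}
\item applied to $\mathfrak l_n$: the $L_\infty$-relation for $\{\ell_n\}_{n\geq 2}$, so $L$ is an $L_\infty$-algebra;
\item applied to $\mathfrak n_{0,q}$: the $A_\infty$-relation for $\{m_{0,q}\}_{q \geq 2}$, so $A$ is an $A_\infty$-algebra;
\item applied to $\mathfrak n_{p,q}$ with $p \geq 1$: mixed identities involving both $\ell_n$ and $m_{p',q'}$.
\end{enumerate}
The mixed identities decompose according to which of the two summands in \eqref{LP:diff2} contributes: the first summand (open-edge expansion) recombines with the $A_\infty$-operations of $A$ to reproduce the Gerstenhaber-bracket differential of $C^{>0}(A,A)[1]$, while the second summand (closed-edge expansion grafted on a closed vertex) reproduces the brackets $\ell_n$ applied to the $L$-inputs. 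Reading these identities as equations on the family
\[
\phi_p : \Lambda^p L \longrightarrow C^{>0}(A,A)[1], \qquad \phi_p(\ell_1,\dots,\ell_p)(a_1,\dots,a_q) := m_{p,q}(\ell_1,\dots,\ell_p; a_1,\dots,a_q),
\]
gives precisely the condition that $\phi = \{\phi_p\}$ is an $L_\infty$-morphism from $L$ into the dg Lie algebra $C^{>0}(A,A)[1]$, with Gerstenhaber bracket on the target and differential induced by the $A_\infty$-structure of $A$. This is exactly Kajiura--Stasheff's definition of an $A_\infty$-algebra over an $L_\infty$-algebra.

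The main obstacle will be the bookkeeping of signs: one must check that the prefactors $(-1)^{|\sigma|+i+|I_1|+i|I_2|}$ in \eqref{LP:diff2} and $(-1)^{|\sigma|}$ in \eqref{LP:diff1}, together with the Koszul signs produced by moving the operadic suspension $\Lambda$ past the tensor factors, match the signs in the Kajiura--Stasheff equations term by term. I would perform this verification by grouping the terms of $d\mathfrak n_{p,q}$ according to whether the expanded vertex is open or closed, and comparing with the expansion of the $L_\infty$-morphism equation written out using the Gerstenhaber bracket on $C^{>0}(A,A)[1]$. Once this identification is made, the equivalence of categories between $\LP_\infty$-algebras and SHLP follows, which is the content of the corollary; this is the identification sketched by Markl in the appendix of \cite{KS06a}, now justified by the Koszulity established in Theorem~\ref{T:LPKoszul}.
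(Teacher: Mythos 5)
Your proposal is correct and follows essentially the same route as the paper: the paper's entire argument for this corollary is the one\--line remark ``Comparing with definition 10 in \cite{KS06a} one gets\dots'', i.e.\ precisely the unwinding of the free\--operad\--on\--corollae description of $\LP_\infty$ with the vertex\--expansion differential \eqref{LP:diff1}--\eqref{LP:diff2} into the families $\ell_n$, $m_{p,q}$ and the matching with Kajiura--Stasheff's definition (the same identities reappear in the paper as \eqref{lp:diff1}--\eqref{lp:diff2} in the proof of Theorem~\ref{SHLP:coder}). You simply make explicit the comparison the paper leaves to the reader.
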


%
%
%
%

%
%
%

\subsection{Coderivations on $S^c(L)$ and $(S^c)^+(L) \otimes T^c(A)$}\hfill\break

Recall from the operad theory that when $\cal P$ is a Koszul operad, a structure of strong homotopy $\cal P$-algebra on a dgvs $A$ is equivalent to
a square zero coderivation on the cofree $\cal P^{\ac}$ coalgebra cogenerated by $A$ which is the same as the cofree $\cal P^!$-coalgebra cogenerated by $sA$. (see \cite[section 10.1]{LodVal} for further details). In our case, we are in the context of colored operads, which does not increase the difficulty.
Using the description of coderivations in lemma \ref{L:liftcoder}, one can use the general theory to describe SHLP in terms of square zero 2-colored coderivation. Here we give another description in the spirit of second author's paper \cite{Hoefel06}. Namely we would like an alternative description that does not use $2$-colored coderivation. In other words, we give a description of strong homotopy Leibniz pair structures on a pair $(L,A)$ by studying 
the coderivations with respect to the coassociative 
coalgebra structures of $S^c(L)$ and $(S^c)^+(L) \otimes T^c(A)$. 

It is well known that an $L_\infty$-algebra structure on $L$ is equivalent to a degree $-1$
coderivation $\cal D$ in ${\rm Coder}(S^c(sL))$ such that \[ [\cal D,\cal D]=0, \] where the bracket denotes the 
commutator of coderivations. We will give an analogous description for SHLPs 
using ${\rm Coder}(S^c(sL))$ and ${\rm Coder}((S^c)^+(sL) \otimes T^c(sA))$.

\begin{nota}
We will denote by $L^p$ the subspace of weight $p$ elements in $(S^c)^+(L)$ and by $L^{\wedge p}$ the subspace of 
weight $p$ elements in the exterior coalgebra $(\Lambda^c)^+(L)$. In other words, the weight grading of those 
coalgebras will be denoted by $(S^c)^+(L) = \bigoplus_{p \geqslant 0}L^p$ and 
$(\Lambda^c)^+(L) = \bigoplus_{p \geqslant 0}L^{\wedge p}$.
\end{nota}

We first note that a map $g : L^p \otimes A^{\otimes q} \to A$ with $q \geqslant 1$, can be lifted to a coderivation in
${\rm Coder}((S^c)^+(L) \otimes T^c(A))$ as follows. For any 
$\bar v_{[n]} \otimes w_{(1;m)} \in (S^c)^+(L) \otimes T^c(A)$, we have that $\tilde g(\bar v_{[n]} \otimes w_{(1;m)})$
is zero if $n < p$ or $m < q$, otherwise it is given by: 
\begin{equation*}
\tilde g(\bar v_{[n]}\otimes w_{(1;m)})=
\sum_{\begin{array}{c}
           \scriptstyle A\sqcup B=\{1,\ldots,n\}, \ 0\leq i<j\leq m \\[-1ex]
           \scriptstyle \# B = p, \ j - i = q
        \end{array}} \pm \;
\bar v_A\otimes (w_{(1;i)}g(\bar v_B\otimes w_{(i+1;j)})w_{(j+1;m)}),
\end{equation*}
here the sign $\pm$ is given by the action of permutations on $\bar v_{[n]}\otimes w_{(1;m)}$ and 
each $w_i$ in $w_{(1;m)} = w_1 \otimes \cdots \otimes w_m \in A^{\otimes m}$ is homogeneous of degree $|w_i|$.

We denote by ${\rm Coder}_A((S^c)^+(L) \otimes T^c(A))$ the subvector space of ${\rm Coder}((S^c)^+(L) \otimes T^c(A))$
spanned by those coderivations obtained by lifting maps $g : (S^c)^+(L) \otimes T^c(A) \to A$. Hence, as vector spaces we have:
\begin{equation*}
 {\rm Coder}_A((S^c)^+(L) \otimes T^c(A)) \cong \Hom((S^c)^+(L) \otimes T^c(A), A).
\end{equation*}

Since, $\Hom((S^c)^+(L) \otimes T^c(A), A) = \Hom((S^c)^+(L), \Hom(T^c(A), A))$ and 
$\Hom(T^c(A), A)\cong {\rm Coder}(T^c(A))$, we have the following isomorphism of vector spaces:
\begin{equation}\label{coder:maps}
 {\rm Coder}_A((S^c)^+(L) \otimes T^c(A)) \cong \Hom((S^c)^+(L), {\rm Coder}(T^c(A)))
\end{equation}

Let $l_G$ denote the Lie bracket in ${\rm Coder}(T^c(A))$ given by the commutator of coderivations, i.e., 
the Gerstenhaber bracket. Since $(S^c)^+(L)$ is a cocommutative coalgebra with coproduct $\Delta$, 
the convolution bracket given by:
\begin{equation*} 
 [f,g] = l_G \circ (f \otimes g) \circ \Delta, 
\end{equation*}
defines a graded Lie algebra structure on $\Hom((S^c)^+(L), {\rm Coder}(T^c(A)))$. Then the isomorphism  (\ref{coder:maps}) is an isomorphism of graded Lie algebras. 
 
\subsubsection{Semi-direct Product}
Let us briefly recall the definition of the semi-direct product of Lie algebras. 
Given two graded Lie algebras $\mathfrak g$ and $\mathfrak h$, a representation by derivations
of $\mathfrak g$ on $\mathfrak h$ is a degree zero Lie algebra morphism:
\begin{equation*}
\rho : \mathfrak g \to {\rm Der}(\mathfrak h)
\end{equation*}
where ${\rm Der}(\mathfrak h)$ denotes the Lie algebra of derivations on $\mathfrak h$.

\begin{defn}
 If $\rho : \mathfrak g \to {\rm Der}(\mathfrak h)$ is a Lie algebra representation by 
 derivations of the Lie algebra $\mathfrak g$ on the Lie algebra $\mathfrak h$, then their semi-direct 
 product is the Lie algebra structure defined on $\mathfrak g \oplus \mathfrak h$ by the following bracket:
 \begin{equation*}
 [(X_1,Y_1),(X_2,Y_2)] = 
 ([X_1,X_2],\rho(X_1)Y_2 - (-1)^{|X_2||Y_1|}\rho(X_2)Y_1 + [Y_1,Y_2]).
 \end{equation*}
 The semi-direct product will be denoted by $\mathfrak g \ltimes \mathfrak h$.
\end{defn}

A natural Lie algebra representation of 
${\rm Coder}(S^c(L))$ on $\Hom((S^c)^+(L), {\rm Coder}(T^c(A)))$ is defined by 
\[ 
  \rho(\phi)f = f \circ \phi, 
\] 
for any $\phi \in {\rm Coder}(S^c(L))$ and $f \in \Hom((S^c)^+(L), {\rm Coder}(T^c(A)))$. 
Notice that:
\begin{equation*}
  \rho([\phi_1,\phi_2],f) 
                          = \rho(\phi_2, \rho(\phi_1,f)) - (-1)^{|\phi_1| |\phi_2|}
                              \rho(\phi_1, \rho(\phi_2,f)),      
\end{equation*}

and
$
  \rho(\phi,[f,g])  = l_G(f \otimes g)\Delta \circ \phi 
                    = l_G(f \otimes g)(\phi \otimes 1 + 1 \otimes \phi)\Delta 
                    = [\rho(\phi,f), g] + [f, \rho(\phi,g)], 
$
hence $\rho$ is a representation by derivations of ${\rm Coder}(S^c(L))$ on 
$\Hom((S^c)^+(L), {\rm Coder}(T^c(A)))$. In view of the isomorphism (\ref{coder:maps}), 
there is a Lie algebra action by derivations of ${\rm Coder}(S^c(L))$ on 
${\rm Coder}_A((S^c)^+(L) \otimes T^c(A))$.
\begin{thm}\label{SHLP:coder}
An SHLP structure on a pair $(L,A)$ is equivalent to a degree $-1$ element  
\[ 
\cal D \in {\rm Coder}(S^c(sL)) \ltimes {\rm Coder}_A((S^c)^+(sL) \otimes T^c(sA)) 
\]
such that $[\cal D,\cal D] = 0$. 
\end{thm}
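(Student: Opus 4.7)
The plan is to apply the standard Koszul duality dictionary between strong-homotopy algebras and square-zero coderivations, and then reorganise the resulting object as an element of the semi-direct product. By corollary \ref{C:SHLPvsALinfty} and the standard equivalence (see \cite[Section 10.1]{LodVal}) adapted to the colored setting, an SHLP structure on $(L,A)$ is equivalent to a degree $-1$, square-zero coderivation on the cofree $2$-colored $H_0(\SCvor)$-coalgebra cogenerated by $sV=(sL,sA)$. By dualising corollary \ref{C:freescvor}, this cofree coalgebra has closed component $S^c(sL)$ and open component $(S^c)^+(sL)\otimes T^c(sA)$, and by Lemma \ref{L:liftcoder} a $2$-colored coderivation on it is uniquely determined by a pair of maps $(\psi,\phi)$ via the explicit formulas \eqref{E:derclose} and \eqref{E:deropen}.

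Next I would exploit the two-term structure of \eqref{E:deropen}: the first sum depends only on $\psi$ and is precisely the natural lift of $\tilde\psi\in{\rm Coder}(S^c(sL))$ to the open component along the coaction $\gamma_{\cl,\op}$, whereas the second sum depends only on $\phi$ and is exactly the canonical lift of $\phi$ into ${\rm Coder}_A((S^c)^+(sL)\otimes T^c(sA))$ as defined at the beginning of the subsection. This yields a natural splitting of the vector space of $2$-colored coderivations as
\[
{\rm Coder}(S^c(sL))\oplus {\rm Coder}_A((S^c)^+(sL)\otimes T^c(sA)),
\]
and the pair $(\psi,\phi)$ encodes exactly the desired element $\cal D$.

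The main step, where I expect the technical work to concentrate, is to identify the commutator bracket of $2$-colored coderivations with the semi-direct product bracket. This reduces to three computations: (i) the closed-closed bracket is tautologically the commutator in ${\rm Coder}(S^c(sL))$; (ii) for $\tilde\psi\in{\rm Coder}(S^c(sL))$ and the lift $\tilde g\in{\rm Coder}_A$ of a map $g:(S^c)^+(sL)\otimes T^c(sA)\to sA$, the commutator $[\tilde\psi,\tilde g]$ is again in ${\rm Coder}_A$ and its underlying map is $g\circ\tilde\psi$, which under the isomorphism \eqref{coder:maps} is exactly the derivation action $\rho(\tilde\psi)$; (iii) the commutator of two lifts in ${\rm Coder}_A$ matches the convolution bracket $l_G\circ(f\otimes g)\circ\Delta$, the key point being that elements of ${\rm Coder}_A$ act trivially on the closed tensors, so the contributing splittings of $(S^c)^+(sL)\otimes T^c(sA)$ are precisely those coming from the cocommutative coproduct on $(S^c)^+(sL)$ combined with the deconcatenation needed to apply the Gerstenhaber bracket on ${\rm Coder}(T^c(sA))$.

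Once this Lie algebra isomorphism is established, the square-zero condition on the $2$-colored coderivation transports verbatim to the condition $[\cal D,\cal D]=0$ in the semi-direct product, and the degree condition is preserved since \eqref{E:derclose} and \eqref{E:deropen} produce coderivations of degree equal to that of the pair $(\psi,\phi)$.
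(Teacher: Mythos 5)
Your argument is correct, but it is not the route the paper takes --- indeed the authors explicitly set that route aside just before the theorem, saying they want a description that does \emph{not} use $2$-colored coderivations. You pass through the general Koszul-duality description of an SHLP as a square-zero $2$-colored coderivation of the cofree $H_0(\SCvor)$-coalgebra on $(sL,sA)$, use the two-term shape of formula (\ref{E:deropen}) to split the space of $2$-colored coderivations as ${\rm Coder}(S^c(sL))\oplus{\rm Coder}_A((S^c)^+(sL)\otimes T^c(sA))$, and then prove that this splitting carries the commutator to the semi-direct product bracket. The paper's proof never mentions $2$-colored coderivations: it reads off from the tree presentation of \S\ref{S:LPinfinity} that an $\LP_\infty$-algebra is a family of maps $l_n$, $n_{p,q}$ satisfying (\ref{lp:diff1})--(\ref{lp:diff2}) (the transcription of the differentials (\ref{LP:diff1})--(\ref{LP:diff2})), writes $\cal D=\cal D_L+\cal D_A$, and checks directly that $[\cal D_L,\cal D_L]=0$ and $2[\cal D_L,\cal D_A]+[\cal D_A,\cal D_A]=0$ are exactly those relations. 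The computational core is the same --- in both versions one must see that the mixed term produces the first sum of (\ref{lp:diff2}) and the ${\rm Coder}_A$-bracket produces the second --- but your version explains conceptually why the semi-direct product appears (it is the $2$-colored coderivation Lie algebra in disguise), at the cost of a careful sign check in your step (ii): with the convention $\rho(\phi)f=f\circ\phi$, and since $\pi_{sA}\circ\Psi=0$ for the lift $\Psi$ of a closed coderivation, the mixed commutators $[\Psi_1,\Phi_2]+[\Phi_1,\Psi_2]$ match the semi-direct product term only up to signs for general arguments, although they do agree on the self-bracket of a single degree $-1$ element, which is all the theorem requires.
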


\begin{proof}
Using the description of $\LP_\infty$ in terms of trees, an algebra over $\LP_\infty$ consists of two families 
of maps $\{l_n : L^{\wedge n} \to L\}_{n \geqslant 2}$ and $\{n_{p,q}: L^{\wedge p} \otimes A^{\otimes q} \to A\}_{p+q \geqslant 2}$.
The maps $l_n$ and $n_{p,q}$ correspond respectively to the corollae $\mathfrak l_n$ and $\mathfrak n_{p,q}$. 
So, their degrees must be given by $|l_n| = n - 2$ and $|n_{p,q}| = p+q-2$
and they must verify the identities corresponding to the definition of the differential operator of $\LP_\infty$ given by 
formulas (\ref{LP:diff1}) and (\ref{LP:diff2}), which in terms of maps becomes: 
\begin{equation}\label{lp:diff1}
 \partial l_n (\bar v_{[n]})= \sum_{A\sqcup B=\{1,\ldots,n\}, \ \# A = p} \pm \; l_{n-p+1}(l_p(\bar v_A)\bar v_B)
\end{equation}
and 
\begin{multline}\label{lp:diff2}
 \partial n_{n,m}(\bar v_{[n]} \otimes w_{(1;m)}) 
 = \sum_{A\sqcup B=\{1,\ldots,n\}, \ \# A = p} \pm \; n_{n-p+1,m}(l_p(\bar v_A)\bar v_B \otimes w_{(1;m)}) \; + \\
 + \; \sum_{\begin{array}{c}
           \scriptstyle A\sqcup B=\{1,\ldots,n\}, \ 0\leq i<j\leq m \\[-1ex]
           \scriptstyle \# B = p, \ j - i = q
        \end{array}} \pm \;
   n_{n-p,m-q+1}(\bar v_A\otimes (w_{(1;i)}n_{p,q}(\bar v_B\otimes w_{(i+1;j)})w_{j+1;m})),
\end{multline}
where the summation runs over all ordered partitions $A\sqcup B=\{1,\ldots,n\}$ into two sets containing at least two elements 
and 
the sign $\pm$ is obtained by multiplying the signs in formulas (\ref{LP:diff1}) and (\ref{LP:diff2}) by the one given by the action 
of the permutation on $\bar v_{[n]}$ and on $\bar v_{[n]} \otimes w_{(1;m)}$. 

A coderivation $\cal D \in {\rm Coder}(S^c(sL)) \ltimes {\rm Coder}_A((S^c)^+(sL) \otimes T^c(sA))$ of degree $-1$ is given by: 
\[ 
    \cal D = \sum_{p \geqslant 1} \tilde l_p \; + \; \sum_{p \geqslant 0, \; q \geqslant 1} \tilde n_{p,q}.
\]
Since suspension converts the symmetric algebra into the exterior algebra and the coderivation $\cal D$ has degree $-1$ on the 
suspension $(sL,sA)$, it follows that the components of $\cal D$ can be viewed as maps $l_p : L^{\wedge p} \to L$ and 
$n_{p,q} : L^{\wedge p} \otimes A^{\otimes q} \to A$ of degrees $|l_p| = p - 2$ and $|n_{p,q}| = p+q -2$.
Denoting $\cal D_L = \sum_{p \geqslant 1} \tilde l_p$ and $\cal D_A = \sum_{p \geqslant 0, \; q \geqslant 1} \tilde n_{p,q}$, 
we have that $\cal D = \cal D_L + \cal D_A$. Using the definition of the bracket in the semi-direct product
${\rm Coder}(S^c(sL)) \ltimes {\rm Coder}_A((S^c)^+(sL) \otimes T^c(sA))$, it follows that:
\[ 
   [\cal D,\cal D] = [\cal D_L + \cal D_A,\cal D_L + \cal D_A] = 
   [\cal D_L, \cal D_L]  +  2[\cal D_L, \cal D_A] + [\cal D_A, \cal D_A],
\]
where $[\cal D_L, \cal D_L] \in {\rm Coder}(S^c(sL))$ and 
$[\cal D_L, \cal D_A] + [\cal D_A, \cal D_A] \in {\rm Coder}((S^c)^+(sL) \otimes T^c(sA))$. The equation 
$[\cal D,\cal D] = 0$ is equivalent to $[\cal D_L,\cal D_L] = 0$ and 
$2[\cal D_L,\cal D_A] + [\cal D_A,\cal D_A] = 0$.
Up to a factor of $2$ the first identity gives relation (\ref{lp:diff1}) for 
the maps $l_p$ while the second identity gives relation (\ref{lp:diff2})
for the maps $l_p$ and $n_{p,q}$ with differentials $d_L = l_1$ and $d_A = n_{0,1}$.
\end{proof}

\section{The operad $H_0(\SC)$ is Koszul}

In this section we follow closely the article by Imma Galvez-Carrillo, Andy Tonks and Bruno Vallette
\cite{GTV09} in order to prove that the operad $H_0(\SC)$ is Koszul. Indeed this operad is not quadratic,
and we need first to give a description by generators and relations so that it is quadratic and linear. By projecting
the relations onto the quadratic part we obtain an operad $qH_0(\SC)$ which turns out to be Koszul. By definition \cite[Appendix A.3]{GTV09} 
one has that $H_0(\SC)$ is Koszul.

\subsection{The homology of $\SC$}

Let us recall that an algebra $A$ over a commutative ring $R$ is {\it unital} if there is a central ring homomorphism
$u : R \to A$ that is also $R$-linear. As mentioned before,  being unital as an $R$-algebra, does not imply that $A$ is unital as a $\kfield$-algebra.
The following theorem is an easy consequence of both F. Cohen and A. Voronov's computation.

\begin{thm}\label{Hsc}
An algebra over $H(\SC)$ is a pair $(G,A)$ where $G$ is a Gerstenhaber algebra and $A$ is a unital associative 
algebra over the commutative algebra $G$. 
\end{thm}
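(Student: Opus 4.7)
The plan is to bootstrap from Voronov's description of $H(\SCvor)$-algebras and to identify the extra structure contributed by the components that distinguish $\SC$ from $\SCvor$. First, I would restrict the $H(\SC)$-action on a pair $(G, A)$ to the suboperad $H(\SCvor)$: by Voronov's theorem, as recorded in corollary~\ref{L:scvor_alg}, this endows $G$ with a Gerstenhaber algebra structure and $A$ with an associative algebra structure together with an action $\rho\colon G \otimes A \to A$ satisfying relations~(\ref{eq:C-A-module}) and~(\ref{eq:C-A}).

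The only components in which $\SC$ and $\SCvor$ disagree are $\SC(n, 0; \op)$ for $n \geq 1$, which parametrize conjugation-symmetric configurations of $2n$ closed disks with no disks on the real line. Restricting such a configuration to the upper half-disk gives a homotopy equivalence $\SC(n, 0; \op) \simeq \cal D_2(n)$, and F.~Cohen's computation of $H_*(\cal D_2(n))$ then identifies the homology. In particular $\SC(1, 0; \op)$ is contractible, so its $H_0$ is one-dimensional and its generator $w$ yields the candidate unit map $w\colon G \to A$.

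It remains to verify that $w$ is a central algebra homomorphism and that the action factors as $\rho(g, a) = w(g)\cdot a$. Each assertion compares two elements lying in a one-dimensional $H_0$: both $e_{0,2}\circ^\op_1 w$ and $e_{1,1}$ live in $H_0(\SC(1,1;\op))=\kfield$, giving $\rho(g,a) = w(g)\cdot a$; the symmetric composition $e_{0,2}\circ^\op_2 w$ gives $\rho(g,a) = a\cdot w(g)$, hence centrality of the image of $w$; and comparing $w\circ_1^\cl f_2$ with the double substitution of $w$ into the two open slots of $e_{0,2}$, both elements of $H_0(\SC(2,0;\op))=\kfield$, yields the multiplicativity of $w$. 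Higher arities $\SC(n, 0; \op)$ with $n \geq 2$ introduce no further generators, since Cohen's computation exhibits their homology as iterated compositions of the Gerstenhaber operations in $\SC(n, 0; \cl)$ post-composed with $w$. The main technical hurdle is to check that the scalars in these one-dimensional comparisons are actually equal and not merely proportional; this can be resolved by selecting explicit path representatives in the pertinent connected configuration spaces (e.g., shrinking the closed pair inside an open disk). The converse direction is immediate: given a Gerstenhaber algebra $G$, an associative algebra $A$, and a central algebra homomorphism $w\colon G\to A$, define the $H(\SC)$-action via the recipes above and observe that every relation reduces either to Voronov's axioms or to the central algebra homomorphism axioms on $w$.
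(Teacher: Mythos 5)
Your proposal is correct and is essentially the paper's (unwritten) argument spelled out: the paper states the theorem only as ``an easy consequence of both F.~Cohen and A.~Voronov's computation,'' and your bootstrap from the $H(\SCvor)$-structure, the identification $\SC(n,0;\op)\simeq \cal D_2(n)$ producing the single new generator $w\in H_0(\SC(1,0;\op))$, and the connectedness comparisons in $H_0(\SC(1,1;\op))$ and $H_0(\SC(2,0;\op))$ are exactly that consequence made explicit (and consistent with the presentation recorded in corollary~\ref{C:Hsc}).
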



For our pourposes, it is useful to present the degree zero homology $H_0(\SC)$ using trees.


\renewcommand{\fst}[1]{\txt{\tiny\emph #1}}   
\renewcommand{\snd}[2]{{}\save[] +<0pt,7.0pt>*{\lie{\txt{\tiny \emph #1}}{\txt{\tiny \emph #2}}{}}  \restore}   
\renewcommand{\trd}[1]{{}\save[] +<0pt,8.2pt>*{\whistle{\txt{\tiny\emph #1}}{}}                     \restore}   
\renewcommand{\fth}[2]{{}\save[] +<0pt,7.0pt>*{\ass{\txt{\tiny \emph #1}}{\txt{\tiny \emph #2}}{}}  \restore}   
\renewcommand{\fif}[2]{{}\save[] +<0pt,8.2pt>*{\comm{\txt{\tiny\emph #1}}{\txt{\tiny\emph #2}}{}}   \restore}   
\newcommand  {\six}[1]{{}\save[] +<0pt,41.5pt> *{\whistle{\txt{\tiny\emph #1}}{}}                     \restore}   

\begin{cor}\label{C:Hsc}
The operad $H_0(\SC)$ can be presented as follows:
\[ 
\raisebox{-4pt}{$\cal F \Big( $}    
  \raisebox{14pt}{\ass{\fst 1}{\fst 2}{-}}, 
 \underbrace{ \raisebox{14pt}{\comm{\fst 1}{\fst 2}{~}}}_{\text{ commutative} }, 
  \raisebox{18pt}{\whistle{\fst 1}{-}} \; 
\raisebox{-4pt}{$\Big) \Big/ R$}
\]
where $R$ is the ideal generated by the following relations: 
\begin{enumerate}[a)]
 \item The generators \raisebox{14pt}{$\ass{\fst 1}{\fst 2}{-}$} and \raisebox{14pt}{$\comm{\fst 1}{\fst 2}{~}$} satisfy associativity; \\[3ex]
  \item  
        \hspace*{\stretch{1}}
        $\whistle{\fif{1}{2}}{-} \hskip 1em \raisebox{-8pt}{\emph =} \hskip 1em \ass{\trd{1}}{\six{2}}{-}$
        \hskip 1em \raisebox{-8pt}{{\rm \it{and}}} \hskip 1em
        $\ass{\trd{1}}{\fst{1}}{-}      \hskip 1em \raisebox{-8pt}{\emph =} \hskip 1em 
         \ass{\fst{1}}{\trd{1}}{-}      \hskip 1em \raisebox{-8pt}{{\rm .}}$
        \hspace*{\stretch{4}}
\end{enumerate}
\end{cor}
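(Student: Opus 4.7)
The plan is to combine Theorem \ref{Hsc} (the Voronov-Cohen description of algebras over $H(\SC)$) with an explicit identification of the low-arity generators of $H_0(\SC)$, verify that the stated relations hold there, and finally upgrade the resulting surjection from the presented operad onto $H_0(\SC)$ to an isomorphism by an arity-wise comparison.

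Identification of generators: the components $\SC(2,0;\cl)\simeq S^1$ and $\SC(0,2;\op)\simeq\{\ast\}\sqcup\{\ast\}$ coincide with their Voronov counterparts, so their $H_0$ produces the commutative class and the associative class (see Corollary \ref{C:scvor_operad}). What is new in $\SC$ relative to $\SCvor$ is the contractible component $\SC(1,0;\op)$, consisting of a single closed disk in the upper half-plane together with its complex conjugate and no open disk; its $H_0$ contributes the unary whistle. The associativity relations in (a) already hold in $H_0(\SCvor)$. The homomorphism relation in (b) is verified by observing that $\SC(2,0;\op)$ is connected (it is homotopy equivalent to $\cal D_2(2)$), so the two compositions $w(c_1\cdot c_2)$ and $w(c_1)\cdot w(c_2)$ represent the same $H_0$-class. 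The centrality relation in (b) follows at once from contractibility of $\SC(1,1;\op)$.

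These relations produce a canonical surjection $\cal Q \twoheadrightarrow H_0(\SC)$, where $\cal Q$ is the presented operad. To prove it is an isomorphism, we normalize elements of the free $\cal Q$-algebra on a pair $V=(V_\cl,V_\op)$: using centrality and the homomorphism relation, every such element in the open component can be written uniquely as $u\otimes a_1\otimes\cdots\otimes a_m$ with $u\in S(V_\cl)$ (possibly trivial), $a_i\in V_\op$, and $(u,m)\neq(\emptyset,0)$. Hence $\cal Q(n,0;\cl)$ and $\cal Q(n,0;\op)$ are the trivial representation of $S_n$ for $n\geq 1$, and $\cal Q(n,m;\op)$ is trivial on $S_n$ and regular on $S_m$ for $m\geq 1$, $n\geq 0$. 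On the other side, $H_0(\SC(n,0;\cl))=H_0(\cal D_2(n))$ is the trivial rep by F.~Cohen; $\SC(n,0;\op)$ is homotopy equivalent to $\cal D_2(n)$ and so has the same $H_0$; and for $m\geq 1$, each component of $\SC(n,m;\op)$ is indexed by an ordering of the $m$ labelled open disks on the real axis and, with that ordering fixed, is connected because the $n$ labelled conjugate pairs of closed disks deform freely in the complement. The arity-wise $S_n\times S_m$-modules therefore agree and the surjection is an isomorphism.

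The step requiring the most care is the connectedness claim for $\SC(n,m;\op)$ with the ordering of the real-line disks fixed: this is a direct connectedness argument in the punctured upper half-plane, but it must be combined with F.~Cohen's identification $H_0(\cal D_2)=\Com$ in order to certify that no classes in $H_0(\SC)$ beyond those produced by the whistle and the Voronov generators appear in higher arity.
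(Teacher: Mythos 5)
Your proof is correct, but it takes a genuinely different route from the paper. The paper treats this corollary as an immediate consequence of Theorem \ref{Hsc}: granting the Cohen--Voronov description of $H(\SC)$-algebras as pairs $(G,A)$ with $A$ a unital associative algebra over $G$, one restricts to degree $0$ and simply reads off the generators (the two products and the unit map $w\colon C\to A$) and the relations (associativity, $w$ a homomorphism, $w$ central); no independent verification is offered. You instead give a self-contained geometric argument at the level of $\pi_0$: you identify the generating classes in $\SC(2,0;\cl)$, $\SC(0,2;\op)$ and $\SC(1,0;\op)$, check each relation by connectedness (resp.\ contractibility) of the relevant component, and then close the argument by matching the arity-wise $S_n\times S_m$-modules of the presented operad $\cal Q$ (via a normal form $u\otimes a_1\otimes\cdots\otimes a_m$) against $\pi_0$ of $\SC(n,m;x)$. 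This buys independence from the full strength of Theorem \ref{Hsc} --- you only need $\pi_0$ computations, not the entire homology of the swiss-cheese operad --- at the cost of the bookkeeping in the last step. One point to phrase carefully: the word ``uniquely'' in your normal form is not free; what the rewriting argument directly gives is that these words \emph{span} $\cal Q(n,m;\op)$, hence an upper bound on its dimension. That suffices, because the surjection $\cal Q\to H_0(\SC)$ together with your independent computation of $\dim H_0(\SC(n,m;x))$ supplies the matching lower bound and forces the normal forms to be linearly independent; but stated as an a priori uniqueness claim it would require exhibiting a $\cal Q$-algebra structure on the span of normal forms. Likewise, surjectivity of $\cal Q\to H_0(\SC)$ deserves one explicit sentence: every connected component of $\SC(n,m;\op)$ contains a configuration of the form $w(c_1)\cdots w(c_n)\,a_{\sigma(1)}\cdots a_{\sigma(m)}$, so the generators do hit every $\pi_0$-class. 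With those two clarifications the argument is complete and consistent with the paper's Corollary \ref{C:freescvor}-style description of the free algebras.
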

It follows from the above theorem that an $H_0(\SC)$-algebra is a pair $(C,A)$ where $A$ is an associative unital algebra over the 
commutative algebra $C$.

%
%

\subsection{$H_0(\SC)$ is a quadratic-linear operad}\hfill\break

Let us recall first the theory explained  in \cite{GTV09} for quadratic-linear operads. A {\sl quadratic-linear} operad is of the form $\mathcal F(E)/(R)$ with $R\subset \mathcal F^{(1)}(E)\oplus  \mathcal F^{(2)}(E)$.
Such an $R$ is called quadratic-linear. We ask also that the presentation satisfies
 
\begin{itemize}
\item[(ql1)] $R\cap E=\{0\}$ and \\
\item[(ql2)] $(R\otimes E+E\otimes R)\cap \mathcal F^{(2)}(E) \subset R\cap \mathcal F^{(2)}(E)$.
\end{itemize}

Let $q$ denote the projection $\mathcal F(E)\epi \mathcal F^{(2)}(E)$ and let $qR$ be the image of $R$ under this projection.
The operad $\mathcal F(E)/(R)$ is {\sl Koszul} if it satisfies (ql1) and (ql2) and if
$\mathcal F(E)/(qR)$ is a Koszul quadratic operad. Its Koszul dual cooperad is 
$(\mathcal F(E)/(qR))^{\ac}$ {\bf together with a differential} that will be explained in the next section.

\medskip

In order to apply the theory, one needs to express $H_0(\SC)$ as a quadratic-linear operad, which is not the presentation given in  corollary \ref{C:Hsc}. Indeed, in the presentation given in corollary \ref{C:Hsc}, the relation ``being a morphism of algebras'' is a quadratic-cubical relation, that is, lies in $\mathcal F^{(2)}(E)\oplus\mathcal F^{(3)}(E)$.
We add a new generator in the description of the operad $H_0(\SC)$ in order to replace the quadratic-cubical relation by quadratic-linear relations. This new generator $e_{1,1}$,
will correspond at the level of algebras to the operation $\rho(c;a):=f(c)a$.
Consequently one introduces also new relations in the operad translating the relations  $f(c)a=af(c)=\rho(c;a)$ and
$\rho(c;f(c'))=f(cc')=f(c)f(c')$ present in the algebra setting.

\begin{prop}
The operad $H_0(\SC)$ has a presentation $\mathcal F(E)/(R)$ where
$$E=\underbrace{kf_2}_{=E(\cl,\cl;\cl)}\oplus
\underbrace{k[S_2]e_{0,2}}_{=E(\op,\op;\op)} \oplus \underbrace{k[S_2] e_{1,1}}_{=E(\cl,\op;\op)\oplus E(\op,\cl;\op)} \oplus \underbrace{k\alpha}_{=E(\cl;\op)}$$
The action of the symmetric group on $f_2$ is the trivial action and $k[S_2]$ denotes the regular representation. The element $e_{1,1}$ forms a basis of $E(\cl,\op;\op)$ and $e_{1,1}\cdot (21)$ a basis of $E(\op,\cl;\op)$. 

The space of relations $R$ is the submodule of $\mathcal F^{(1)}(E)\oplus  \mathcal F^{(2)}(E)$
defined by $R=R_v\oplus R(\alpha)$ where $R_v$, defined in corollary \ref{C:scvor_operad}, describes the relations in the presentation of the operad $H_0(\SCvor)$ and $R(\alpha)$ is the $S_2$-submodule of $\cal F(E)$ generated by the following relations:
\begin{itemize}
\item two quadratic-linear relations:  $e_{0,2}\circ_1\alpha-e_{1,1}$ and $(e_{0,2}\cdot (21))\circ_1\alpha-e_{1,1}$,
\item a new quadratic relation: $e_{1,1}\circ^\op \alpha-\alpha\circ_1f_2$. 
\end{itemize}
Moreover this presentation satisfies (ql1) and (ql2).
\end{prop}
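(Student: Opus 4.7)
The plan is as follows. First, I would establish the isomorphism between this new presentation and the one in Corollary~\ref{C:Hsc}. Define a morphism $\phi : \mathcal F(E)/(R) \to H_0(\SC)$ by sending $f_2$, $e_{0,2}$, $\alpha$ to the commutative, associative, and whistle generators respectively, and $e_{1,1}$ to the composite $e_{0,2}\circ_1\alpha$. Because the whistle is central in $H_0(\SC)$, we have $e_{0,2}\circ_1\alpha = (e_{0,2}\cdot(21))\circ_1\alpha$, so both quadratic-linear relations hold; the new quadratic relation $e_{1,1}\circ^\op\alpha - \alpha\circ_1 f_2$ becomes exactly the statement that $\alpha$ is an algebra morphism; and the $R_v$ action relations for $e_{1,1}$ follow from associativity of $e_{0,2}$ together with centrality and the morphism property of $\alpha$. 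Conversely, define $\psi$ from the old presentation of Corollary~\ref{C:Hsc} into the new quotient as the identity on the common generators $f_2, e_{0,2}, \alpha$; centrality and the morphism relation for $\alpha$ hold in the target because the two quadratic-linear relations force $e_{0,2}\circ_1\alpha = (e_{0,2}\cdot(21))\circ_1\alpha = e_{1,1}$, and then $e_{1,1}\circ^\op\alpha = \alpha\circ_1 f_2$ is the new quadratic relation. These maps are mutually inverse on generators (with $\psi\phi(e_{1,1}) = e_{0,2}\circ_1\alpha = e_{1,1}$), so the two presentations define the same operad.

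Second, condition (ql1) follows by direct inspection. All generating relations in $R_v$ are purely quadratic and thus contribute nothing to $R\cap E$. The two quadratic-linear relations $e_{0,2}\circ_1\alpha - e_{1,1}$ and $(e_{0,2}\cdot(21))\circ_1\alpha - e_{1,1}$ live in arity $(\cl,\op;\op)$ and $(\op,\cl;\op)$; their quadratic parts are linearly independent elements of $\mathcal F^{(2)}(E)$, so any $\mathbb S$-combination of them lying entirely in $E$ forces the quadratic parts to vanish, hence both coefficients are zero and so is the linear part. The remaining relation $e_{1,1}\circ^\op\alpha - \alpha\circ_1 f_2$ has no linear component. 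Thus $R\cap E = \{0\}$.

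The third and most delicate step is verifying (ql2). Writing $R = R^{(1)}\oplus R^{(2)}$ with $R^{(1)}\subset E$ and $R^{(2)}\subset \mathcal F^{(2)}(E)$, the condition amounts to checking that no new element of $E\oplus \mathcal F^{(2)}(E)$ is produced by the operadic closure of $R$. The obstructions come from cubic trees that can be rewritten using the quadratic-linear relations in more than one way. The relevant diamonds are trees of the form $e_{0,2}\circ_1(\alpha\circ_1 f_2)$ and variants, where one may reduce $\alpha\circ_1 f_2 \to e_{1,1}\circ^\op\alpha$ first or $e_{0,2}\circ_1\alpha \to e_{1,1}$ first; trees of the form $e_{0,2}\circ_i(e_{0,2}\circ_1\alpha)$ and symmetric variants, where the quadratic-linear relation competes with associativity of $e_{0,2}$; and trees inserting two copies of $\alpha$ into $e_{0,2}$ at distinct positions, where centrality must close the diamond. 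In each case one reduces along both paths and verifies that the two reductions agree modulo the action relations for $e_{1,1}$ in $R_v$ together with associativity. The main obstacle is the careful bookkeeping of these cases, but they all close because the $R_v$ relations for $e_{1,1}$ were chosen precisely to encode the identification $e_{1,1} = e_{0,2}\circ_1\alpha$, so no additional weight-$\leq 2$ element is forced into $R$, establishing (ql2).
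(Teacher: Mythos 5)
Your proposal is sound, and it actually supplies more than the paper does: the proposition is stated there without proof, the only justification being the informal remark that $e_{1,1}$ is introduced as a name for the operation $\rho(c;a)=f(c)a$ so as to trade the quadratic--cubical morphism relation for quadratic--linear ones. Your first step makes that remark precise in exactly the intended way (the mutually inverse maps $\phi$, $\psi$ with $\phi(e_{1,1})=e_{0,2}\circ_1\alpha$, and the check that the $R_v$ relations for $e_{1,1}$ become consequences of associativity, centrality and the morphism property), and your (ql1) argument is complete: by arity, the only weight-two trees in $\mathcal F^{(2)}(E)(\cl,\op;\op)$ are $e_{0,2}\circ_1\alpha$ and $(e_{0,2}\cdot(21))\circ_1\alpha$, so the quadratic parts of the two quadratic--linear relations cannot cancel against anything else.

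The one place where your argument remains a sketch is (ql2). You correctly reduce it to the overlaps involving $r_i=e_{0,2}^{(\pm)}\circ_1\alpha-e_{1,1}$ (these are the only relations with a linear part, so only their compositions with generators can contribute to the $\mathcal F^{(2)}$-component), and the three families of critical trees you list are the relevant ones; but the closures are asserted rather than computed, and a careful reader would want at least one of them written out, e.g.\ that cancelling the weight-three part of $e_{0,2}\circ_1 r_1-r_1\circ_2 e_{0,2}$ against (associativity of $e_{0,2}$)$\circ_1\alpha$ leaves precisely the $R_v$ relation $e_{1,1}\circ_1^\op e_{0,2}-e_{0,2}\circ_1^\op e_{1,1}$, which lies in $R\cap\mathcal F^{(2)}(E)$ as required. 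Note also that a cleaner route to (ql2) is available from the paper's very next statement: once one knows $qH_0(\SC)=P(\alpha)\circ H_0(\SCvor)$ via the distributive law (\ref{E:distributive}), the rewriting system you describe is confluent for general reasons, and the diamond checks become the standard compatibility conditions for a distributive law rather than ad hoc bookkeeping.
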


The projection of $R=R_v\oplus R(\alpha)$ onto $\mathcal F^{(2)}(E)$ is 
$qR=R_v\oplus R'(\alpha)$ where $R'(\alpha)$ is the submodule of  $\mathcal F^{(2)}(E)$
generated by the relations
\begin{itemize}
\item $e_{0,2}\circ_1\alpha$ and $e_{0,2}\circ_2\alpha$,
\item $e_{1,1}\circ_2 \alpha-\alpha\circ_1f_2$. 
\end{itemize}
Consequently an algebra over this operad is an algebra $(C,A)$ over the operad $H_0(\SCvor)$ together with a linear map $f:C\rightarrow A$ satisfying $f(c)a=af(c)=0$ for all $c\in C, a\in A$ and
$\rho(c;f(c'))=f(cc')$ for all $c,c'\in C$.
As in \cite{GTV09}, the operad $qH_0(\SC)$ is obtained as the result of a distributive law between the operad $H_0(\SCvor)$ and $P(\alpha)$ where $P(\alpha)$ is a free colored operad generated by a 1-dimensional vector space $V$ with basis $\alpha\in V(\cl;\op)$. 
The distributive law is given by

\begin{equation}\label{E:distributive}
\begin{array}{ccc}
 H_0(\SCvor)\circ P(\alpha) &\rightarrow &P(\alpha)\circ H_0(\SCvor) \\
e_{0,2}\circ_1\alpha, e_{0,2}\circ_2 \alpha &\mapsto& 0 \\
e_{1,1}\circ_2 \alpha & \mapsto & \alpha\circ_1f_2.\\
\end{array}
\end{equation}

We sum up the result in the next proposition.

\begin{prop} The operad $qH_0(\SC)$ is the operad $P(\alpha)\circ H_0(\SCvor)$, with the composition given by the distributive law (\ref{E:distributive}).
\end{prop}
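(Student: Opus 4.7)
The plan is to invoke the general theory of distributive laws between operads, as developed for example in \cite[Chapter 8]{LodVal} and used in \cite{GTV09}, adapted to the two-colored setting. Set $\mathcal{A}:=P(\alpha)$, freely generated by $\alpha\in V(\cl;\op)$, and $\mathcal{B}:=H_0(\SCvor)$. The underlying 2-collection of $qH_0(\SC)=\mathcal{F}(E)/(qR)$ splits as $\mathcal{F}(E_\alpha)\oplus\mathcal{F}(E_v)$-generated pieces, so to identify $qH_0(\SC)$ with $\mathcal{A}\circ\mathcal{B}$ (with multiplication induced by a distributive law) it suffices to check the following: (i) the rewriting rules of (\ref{E:distributive}) define a morphism of 2-collections $\lambda:\mathcal{B}\circ\mathcal{A}\to\mathcal{A}\circ\mathcal{B}$; (ii) $\lambda$ is a distributive law, i.e.\ it satisfies the two pentagon-type coherence axioms with respect to the composition in $\mathcal{A}$ and in $\mathcal{B}$; (iii) the operad built from $(\mathcal{A},\mathcal{B},\lambda)$ by transport coincides with the quadratic quotient $\mathcal{F}(E)/(qR)$.

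First I would define $\lambda$ on generators by the three formulas in (\ref{E:distributive}) and extend it by the requirement of compatibility with the operadic composition in $\mathcal{B}$. Concretely, an element of $\mathcal{B}\circ\mathcal{A}$ is represented by a two-level tree whose top level consists of copies of $\alpha$ grafted onto closed inputs of a $\mathcal{B}$-operation; one pushes each $\alpha$ downward through $\mathcal{B}$ using (\ref{E:distributive}) until only $\alpha$'s appear above the $\mathcal{B}$-operations. Since the relations defining $\mathcal{B}=H_0(\SCvor)$ are those of corollary \ref{C:scvor_operad}, for (ii) I must verify that this rewriting is independent of the order in which the pushes are performed. This reduces, via the diamond lemma for operadic PBW bases, to checking coherence at each quadratic relation of $\mathcal{B}$ with an additional $\alpha$ inserted on one closed input: the associativity of $f_2$ composed once with $\alpha$, the associativity of $e_{0,2}$ composed with $\alpha$, the relation $e_{1,1}\circ^\op_1 e_{1,1}-e_{1,1}\circ^\cl_1 f_2$ with an $\alpha$ at the closed input, and the two relations expressing that $e_{1,1}$ and $e_{0,2}$ commute (suitably) with $\alpha$ on the open inputs. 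Each case is a short tree computation; for instance the critical pair coming from $e_{1,1}\circ^\op_1 e_{1,1}$ with $\alpha$ inserted at the upper closed input reduces on one side via $e_{1,1}\circ_2\alpha\mapsto\alpha\circ_1 f_2$ and on the other side via the relation $e_{1,1}\circ_1^\op e_{1,1}=e_{1,1}\circ_1^\cl f_2$ and gives the same element $\alpha\circ_1(f_2\circ_2 f_2)=\alpha\circ_1(f_2\circ_1 f_2)$.

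Step (iii) is then essentially bookkeeping: by the standard distributive law theorem, once $\lambda$ has been shown to be a well-defined distributive law, the composite $\mathcal{A}\circ\mathcal{B}$ carries a canonical operad structure, and this operad is presented by the generators $E_\alpha\oplus E_v$ modulo the relations of $\mathcal{A}$, of $\mathcal{B}$, and the graph of $\lambda$ (written as equalities in $\mathcal{F}^{(2)}(E)$). These last relations are precisely $R'(\alpha)$, so the presentation coincides with $qH_0(\SC)=\mathcal{F}(E)/(R_v\oplus R'(\alpha))$ as 2-colored operads.

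The main obstacle I expect is step (ii): one has to run through every critical pair produced by inserting one copy of $\alpha$ into each quadratic relation of $\mathcal{B}$ and check that both reduction paths converge in $\mathcal{A}\circ\mathcal{B}$. Although each individual check is elementary, enumerating the pairs carefully (in particular those involving the mixed generator $e_{1,1}$ in both its $(\cl,\op;\op)$ and $(\op,\cl;\op)$ forms, and those involving $e_{0,2}$ which kills $\alpha$) is where an oversight could easily occur; everything else is formal consequence of the general machinery of distributive laws between (colored) operads.
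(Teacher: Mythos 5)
Your argument is correct and is essentially the route the paper takes: the paper in fact offers no proof of this proposition at all, merely asserting that $qH_0(\SC)$ arises from the distributive law (\ref{E:distributive}) and appealing to the machinery of \cite{GTV09} and \cite{LodVal}, so your verification of the critical pairs supplies exactly the details left implicit. Two cosmetic remarks: since $\alpha$ has \emph{open} output it is grafted into open (not closed) inputs, so the critical pair ``associativity of $f_2$ with an $\alpha$ inserted'' is vacuous, and the only genuinely nontrivial coherence check is the one you single out, coming from $e_{1,1}\circ_1^\op e_{1,1}-e_{1,1}\circ_1^\cl f_2$.
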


\begin{thm}\label{T:SCKoszul} The operad $H_0(\SC)$ is Koszul.
\end{thm}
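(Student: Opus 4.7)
The plan is to invoke the quadratic-linear Koszul criterion of Galvez-Carrillo--Tonks--Vallette \cite{GTV09}: since the preceding proposition already exhibits a presentation $H_0(\SC)=\cal F(E)/(R)$ satisfying conditions (ql1) and (ql2), it suffices to show that the associated purely quadratic operad $qH_0(\SC)=\cal F(E)/(qR)$ is Koszul. All the remaining work is therefore concentrated on establishing Koszulity of this quadratic operad.

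For this, I would invoke the distributive law criterion (in the version adapted to colored operads, see \cite{LodVal}): if $\cal A$ and $\cal B$ are Koszul operads and $\lambda:\cal B\circ\cal A\to\cal A\circ\cal B$ is a distributive law such that the composite $\cal A\vee_\lambda\cal B$ is isomorphic as an $\mathbb S$-module to $\cal A\circ\cal B$, then $\cal A\vee_\lambda\cal B$ is itself Koszul. In our situation $\cal A=P(\alpha)$ is the free colored operad on the single unary generator $\alpha$ of arity $(\cl;\op)$, hence trivially Koszul, and $\cal B=H_0(\SCvor)$ is Koszul by Theorem~\ref{T:LPKoszul}. The candidate distributive law is the one displayed in (\ref{E:distributive}), and the preceding proposition asserts precisely that $qH_0(\SC)\cong P(\alpha)\circ H_0(\SCvor)$ as $\mathbb S$-modules via $\lambda$.

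The step I expect to require the most care is the verification that $\lambda$ genuinely defines a distributive law in the operadic sense, that is, that the induced rewriting rules are confluent. Concretely, one has to check that every critical monomial built from the generators $f_2$, $e_{0,2}$, $e_{1,1}$ and $\alpha$ (for instance compositions of the form $e_{1,1}\circ(e_{0,2}\circ\alpha)$ or iterated substitutions of $\alpha$ into $f_2$ via $e_{1,1}$) has a unique normal form in $P(\alpha)\circ H_0(\SCvor)$. Equivalently, by the diamond lemma for operads, it suffices to verify that the parallel rewriting paths on the minimal set of ambiguities agree; the three rewriting rules in (\ref{E:distributive}) produce finitely many such ambiguities, all of which can be checked by direct computation in $\cal F^{(3)}(E)$ modulo $qR$.

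Once confluence is established, the distributive law criterion delivers the Koszulity of $qH_0(\SC)$. Combined with the verification of (ql1) and (ql2) already carried out, the Galvez-Carrillo--Tonks--Vallette theorem then concludes that $H_0(\SC)$ is Koszul as a quadratic-linear operad, which is precisely the statement of Theorem~\ref{T:SCKoszul}. As a byproduct, one obtains that the Koszul dual cooperad $H_0(\SC)^{\ac}$ is the cooperad $(\cal F(E)/(qR))^{\ac}$ equipped with an inhomogeneous differential coming from the linear part of $R$; this is the differential that will play a role in the following section when identifying OCHA with algebras over the cobar construction.
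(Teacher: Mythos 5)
Your proposal is correct and follows essentially the same route as the paper: reduce via the quadratic-linear criterion of \cite{GTV09} (conditions (ql1), (ql2) having been checked in the preceding proposition) to the Koszulity of $qH_0(\SC)$, and then obtain that from the distributive-law criterion of \cite{LodVal}, with $P(\alpha)$ free on a unary generator and $H_0(\SCvor)$ Koszul by Theorem~\ref{T:LPKoszul}. The only difference is one of emphasis: you spell out the confluence check on critical monomials needed to verify that $\lambda$ is a genuine distributive law, a verification the paper leaves implicit in its citation of the proposition identifying $qH_0(\SC)$ with $P(\alpha)\circ H_0(\SCvor)$.
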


\begin{proof} From  \cite[Chapter8]{LodVal}, one has that $qH_0(\SC)=P(\alpha)\circ H_0(\SCvor)$ is Koszul for $H_0(\SCvor)$ and $P(\alpha)$ are Koszul colored operads.
\end{proof}

The  Koszul dual  cooperad of the operad
$qH_0(\SC)=P(\alpha)\circ H_0(\SCvor)$
is
$$(qH_0(\SC))^{\ac}= H_0(\SCvor)^{\ac}\circ P(\alpha)^{\ac},$$ 
where $H_0(\SCvor)^{\ac}=(\Lambda\LP)^{*}$ and where $P(\alpha)^{\ac}$ is the cofree $2$-colored 
cooperad cogenerated by an element $\alpha$ of degree $1$ in $P(\alpha)^{\ac}(\cl;\op)$.

%
%
%
%

\subsection{The Koszul dual of $H_0(\SC)$ and it's Koszul resolution.}

In the proof of theorem \ref{T:SCKoszul} we have considered the operad $qH_0(\SC)=\cal F(E)/qR$, where $qR$ is the image of $R$ by the projection $q:\cal F(E)\epi \cal F^{(2)}(E)$. Furthermore, we have seen that algebras over the operad $qH_0(\SC)$ are $H_0(\SCvor)$-algebras $(C,A)$ endowed with a map $f:C\rightarrow A$ such that $f(c)a=af(c)=0$ and $\rho(c,f(c'))=f(cc')$.

Let $\varphi:qR\rightarrow E$ defined by
 
\begin{equation}\label{E:phi}
 \begin{cases}\varphi(e_{0,2}\circ_1 \alpha)=\varphi((e_{0,2}\cdot (21))\circ_1 \alpha)=e_{1,1},\\
 \varphi(R_v)=0,\\ 
 \varphi(e_{1,1}\circ_2 \alpha-\alpha\circ_1f_2)=0.\end{cases}
\end{equation}

The Koszul dual cooperad of $qH_0(\SC)$ is $qH_0(\SC)^{\ac}=C(sE,s^2qR)$, with the notation
of definition \ref{D:quad}. To $\varphi$ is associated the composite map
$$qH_0(\SC)^{\ac} \epi s^2qR\xrightarrow{s^{-1}\varphi}
sE.$$
There exists a unique
coderivation $\widetilde{d}_\varphi\, :\, qH_0(\SC)^{\ac}\rightarrow \cal F^c(sE)$  which extends this map. Moreover, $\widetilde{d}_\varphi$  induces a square zero coderivation $d_\varphi$ on
the Koszul dual cooperad $qH_0(\SC)^{\ac}$.

The Koszul dual  cooperad of $H_0(\SC)$ is by definition

$$H_0(\SC)^{\ac}=(C(sV,s^2qR),d_\varphi).$$

The Koszulity of the operad $H_0(\SC)$ implies the quasi-isomorphism (see \cite[Theorem 38]{GTV09})

\begin{equation}\label{E:Koszulresol}
 \Omega(H_0(\SC)^{\ac})\xrightarrow{\sim} H_0(\SC).
\end{equation}

Note that $ \Omega(H_0(\SC)^{\ac})$ is not a minimal model of $H_0(\SC)$ since its differential has a linear part coming from the differential on $H_0(\SC)^{\ac}$.

In order to understand the differential in the cooperad $H_0(\SC)^{\ac}$ it is usually more convenient to understand the Koszul dual operad $H_0(\SC)^!$. 
Recall from (\ref{E:Koszuldual}) that
$$qH_0(\SC)^!=(\Lambda(qH_0(\SC)^{\ac}))^*=\cal F(s^{-1}\Lambda^{-1}E^*)/(qR)^\bot,$$
with a derivation $\delta_\varphi$ which is the unique derivation of operads extending the map
$$^t\varphi: s^{-1}\Lambda^{-1}E^*\rightarrow \cal F(s^{-1}\Lambda^{-1}E^*)$$
where $^t\varphi$ is a combination of transpose and signed suspension of $\varphi$. Namely, 
$H_0(\SC)^!$ is a differential graded operad and one has

\begin{prop} \label{h0scdual}
A differential graded algebra over $H_0(\SC)^!$ consists in the following data
\begin{enumerate}
\item  a differential graded Lie algebra $(L,[,],d_L)$, 
\item a differential graded associative algebra $(A,d_A)$ 
\item A degree $0$ action $\rho:L\otimes A \rightarrow A$ satisfying the relations
\begin{align*}
\rho(l,aa')=& \rho(l,a)a'+(-1)^{|l||a|}a\rho(l,a') \\
\rho([l,l'],a)=&\rho(l,\rho(l',a))-(-1)^{|l||l'|} \rho(l',\rho(l,a))\\
\end{align*}
\item a degree $-1$ map $f:L \rightarrow A$ satisfying the relations
\begin{align*}
f([l,l'])=&(-1)^{|l|}\rho(l,f(l'))-(-1)^{|l||l'|+|l'|}\rho(l',f(l))\\
d_A(f(l))=&-f(d_Ll)
\end{align*}
\item the relation
$$d_A\rho(l,a)=\rho(d_Ll,a)+(-1)^{|l|}\rho(l,d_Aa)+f(l)a-(-1)^{|a|(|l|+1)}af(l).$$
\end{enumerate}
\end{prop}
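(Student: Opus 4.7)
The plan is to first identify the underlying graded operad of $H_0(\SC)^!$, then compute the Koszul derivation $\delta_\varphi$ on it, and finally translate the resulting dg-operad structure into the data of items (1)--(5).

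\emph{Step 1 -- the underlying graded operad.} Since $qH_0(\SC)=P(\alpha)\circ H_0(\SCvor)$ via the distributive law (\ref{E:distributive}), and Koszul duality of colored operads is compatible with distributive laws, one has
$$qH_0(\SC)^!\;=\;H_0(\SCvor)^!\circ P(\alpha)^!\;=\;\LP\circ P(\alpha)^!,$$
carrying a dual distributive law. The generator $\alpha$ is unary in degree $0$, so by the rule recalled after (\ref{E:Koszuldual}) the generator $f$ of $P(\alpha)^!$ sits in degree $-1$. Hence an algebra over the underlying graded operad of $H_0(\SC)^!$ is a Leibniz pair $(L,A,\rho)$ in $\dgvs$ together with a linear map $f:L\to A$ of degree $-1$, with one extra relation coming from the orthogonal of $e_{1,1}\circ_2\alpha-\alpha\circ_1 f_2$ in $(qR)^\bot$. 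Evaluated on $(l,l')\in L\otimes L$ this last relation becomes the first identity of item (4).

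\emph{Step 2 -- the Koszul derivation.} By definition $\delta_\varphi$ is the unique derivation of operads extending $^t\varphi:s^{-1}\Lambda^{-1}E^*\to\cal F(s^{-1}\Lambda^{-1}E^*)$. Inspecting (\ref{E:phi}), $e_{1,1}\in E$ is the only element in the image of $\varphi$, so
$$\delta_\varphi(\mathfrak l_2)=\delta_\varphi(\mathfrak n_{0,2})=\delta_\varphi(f)=0, \qquad \delta_\varphi(\mathfrak n_{1,1})=\mathfrak n_{0,2}\circ_1 f-(\mathfrak n_{0,2}\cdot(21))\circ_1 f,$$
with the explicit signs and antisymmetrization dictated by the operadic suspension.

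\emph{Step 3 -- translation.} A dg-algebra over a dg-operad $\cal Q$ is an operad map $\cal Q\to\End_V$ commuting with differentials, so the identity $[d_V,g]=\delta_\varphi(g)$ must hold for each generator $g$. Applied to $\mathfrak l_2$ and $\mathfrak n_{0,2}$ this yields the dg-Lie and dg-associative axioms of items (1) and (2); applied to $f$ it gives the second identity of item (4); applied to $\mathfrak n_{1,1}$ it unfolds into item (5). Items (3) and the first identity of (4) are the static relations already established in Step 1.

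\emph{Main obstacle.} The delicate point is sign bookkeeping: the exact signs in items (4) and (5) arise from the composition of three conventions --- the Koszul sign rule on $\dgvs$, the suspension $s$ appearing in $\cal P^{\ac}=C(sV,s^2qR)$, and the operadic suspension $\Lambda$ built into $\cal P^!$. To match the precise signs in the statement I would follow verbatim the sign conventions of \cite{GTV09}, where the quadratic-linear machinery for operads was set up with exactly this kind of application in mind.
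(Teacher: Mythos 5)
Your proposal is correct and follows essentially the same route as the paper's proof: identify the underlying graded operad via the (dual) distributive law $qH_0(\SC)^!=\LP\circ P(\alpha)^!$ to get the graded Leibniz pair plus the degree $-1$ map $f$ with the first identity of item (4), observe that $\delta_\varphi$ vanishes on all generators except $\mathfrak n_{1,1}$, and use the compatibility $[d_V,g]=\delta_\varphi(g)$ to deduce items (1), (2), the second identity of (4), and item (5). The paper is equally brief about the sign bookkeeping, so deferring to the conventions of \cite{GTV09} is consistent with its level of detail.
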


\begin{proof}From $qH_0(\SC)^{\ac}=H_0(\SC)^{\ac}\circ \cal P(\alpha)^{\ac}$, one obtains that a graded algebra over $H_0(\SC)^!$ is a graded Leibniz pair $(L,A,\rho)$ (thus  $(a),(b),(c)$ of the proposition) together with a degree $-1$ map
$f:L\rightarrow A$. The first relation of item $(d)$ comes from the transpose of the distributive law (\ref{E:distributive}). The derivation $\delta_\varphi$ is non-zero only when $\rho$ is involved, as seen in the definition of $\varphi$ in (\ref{E:phi}).
If $\cal P$ is a differential graded operad and $A$ is a differential graded $\cal P$-algebra then 
$$d_{A}(\mu(a_1,\ldots,a_n))=(d_{\cal P}(\mu))(a_1,\ldots,a_n)+\sum\pm \mu(a_1,\ldots,d_Aa_i,\ldots,a_n).$$
As a consequence if $d_{\cal P}(\mu)=0$ then $\mu$ preserves the differential on $A$. This is the reason why $L$ is a differential graded Lie algebra, $A$ is a differential associative algebra and $f$ preserves the differential. The relation $(e)$ comes from the definition of $\delta_\varphi$.
\end{proof}

Equivalently, the next proposition gives a presentation of the operad $H_0(\SC)^!$ in terms of generators and relations.

\renewcommand{\fst}[1]{\txt{\tiny\emph #1}}   
\renewcommand{\snd}[2]{{}\save[] +<00.0pt,07.0pt>*{\act{\txt{\tiny\emph #1}}{\txt{\tiny\emph #2}}{}}  \restore}   
\renewcommand{\trd}[1]{{}\save[] +<00.0pt,08.0pt>*{\whistle{\txt{\tiny\emph #1}}{}}                   \restore}   
\renewcommand{\fth}[2]{{}\save[] +<00.0pt,07.0pt>*{\ass{\txt{\tiny\emph #1}}{\txt{\tiny\emph #2}}{}}  \restore}   
\renewcommand{\fif}[2]{{}\save[] +<00.0pt,07.0pt>*{\lie{\txt{\tiny\emph #1}}{\txt{\tiny\emph #2}}{}}  \restore}   
\renewcommand{\six}[1]{{}\save[] +<-1.0pt,08.2pt>*{\whistle{\txt{\tiny\emph #1}}{}}                   \restore}   

\begin{prop}\label{L:hsc}
The differential graded operad $H_0(\SC)^!$ can be presented as follows:
\[ 
\raisebox{-4pt}{$\cal F \Big($}     
\raisebox{14pt}{\lie{\fst 1}{\fst 2}{~}}, 
  \raisebox{14pt}{\ass{\fst 1}{\fst 2}{-}}, 
  \raisebox{14pt}{\act{\fst 1}{\fst 1}{-}}, 
  \raisebox{18pt}{\whistle{\fst 1}{-}} \; 
\raisebox{-4pt}{$\Big) \Big/ R$}
\]
where $\mathfrak n_{1,0}=$ \raisebox{22pt}{\whistle{\fst 1}{-}}\; has degree $-1$ and all the others generators have degree $0$ and where $\mathfrak l_2=$\raisebox{22pt}{\lie{\fst 1}{\fst 2}{~}} is antisymmetric. The ideal
$R$ is  generated by the following relations: 
\begin{enumerate}[a)]
 \item The generator  \raisebox{14pt}{$\lie{\fst 1}{\fst 2}{~}$} satisfies Jacobi identity; \\[3ex]

\item The generator $\mathfrak n_{0,2}=$\raisebox{14pt}{$\ass{\fst 1}{\fst 2}{-}$} satisfies associativity; \\[3ex]
 
\item The generator $\mathfrak n_{1,1}= $ \raisebox{14pt}{\act{\fst 1}{\fst 1}{-}} is an action, that is, satisfies\\[3ex]

  \hskip 0.5em 
   the Leibniz rule:     \act{\fst1 }{\fth12}{-} \hskip 2em   \raisebox{-4pt}{$=$} \hskip 1.2em
        \ass{\snd11}{\fst2 }{-} \hskip 1em   \raisebox{-4pt}{$+$} \hskip .5em
        \ass{\fst1 }{\snd12}{-} \hskip 3em   \\[1em] 
        
the Lie algebra morphism condition:            \hskip 1.5em 
        \act{\fif12}{\fst1 }{-} \hskip 1em   \raisebox{-4pt}{$=$} \hskip .5em
        \act{\fst1 }{\snd21}{-} \hskip 2em   \raisebox{-4pt}{$-$} \hskip 1em
        \act{\fst2 }{\snd11}{-} \hskip 3em   \\[1em] 

 \item  ``The Eye'' relation (see Figure \ref{fig:the-eye}):         \hskip 1.5em 
        \whistle{\fif{1}{2}}{-} \hskip 1.5em  \raisebox{-8pt}{\emph =} \hskip .75em \act{\fst 1}{\six{2}}{-} 
                                \hskip 1em    \raisebox{-8pt}{$ - $  } \hskip .5em \act{\fst 2}{\six{1}}{-} 
                                \hskip 2em    \\[1em]

\item
The differential $d : H_0(\SC)^! \to H_0(\SC)^!$ is defined by: 
$
    d  \raisebox{1.5em}{\act{\fst 1}{\fst 1}{-}}
       \hskip .5em ${\emph =}$ \hskip 1em
       \raisebox{8pt}{\ass{\trd1 }{\fst1}{-}} \hskip 1.5em  - \hskip 1em  \raisebox{8pt}{\ass{\fst1 }{\trd1}{-}}
$\;  and is zero on all the others generators.
\end{enumerate}
\end{prop}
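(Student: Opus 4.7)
The plan has three steps: (I) identify the underlying quadratic operad $qH_0(\SC)^!$ by Koszul-dualizing the distributive-law decomposition $qH_0(\SC)=P(\alpha)\circ H_0(\SCvor)$ established in the preceding proposition; (II) recover the differential from the linear map $\varphi$ of \eqref{E:phi}; (III) cross-check against Proposition~\ref{h0scdual}.

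\textbf{Step I.} Applying Koszul duality termwise to the distributive-law composite (cf.\ \cite[Chapter 8]{LodVal}) yields a presentation of $qH_0(\SC)^!$ whose generators come from $H_0(\SCvor)^! = \LP$ and from $P(\alpha)^!$. The operad $\LP$ (Definition~\ref{D:LP}) contributes $\mathfrak l_2$, $\mathfrak n_{0,2}$, $\mathfrak n_{1,1}$ with relations (J), (A), (D), (M), which are precisely the relations (a), (b), (c) of the proposition. The operad $P(\alpha)^!$ is one-dimensional in arity $(\cl;\op)$: by the unary degree-shift rule recalled after \eqref{E:Koszuldual}, its generator $\mathfrak n_{1,0}$ has degree $-1$, and no quadratic relation arises because $\alpha$ has closed input and open output and so is not composable with itself. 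Finally, the only non-trivial line of \eqref{E:distributive}, namely $e_{1,1}\circ_2\alpha\mapsto\alpha\circ_1 f_2$, transposes to a relation between $\mathfrak n_{1,0}\circ_1^\cl\mathfrak l_2$ and the two compositions $\mathfrak n_{1,1}\circ_1^\op\mathfrak n_{1,0}$, $(\mathfrak n_{1,1}\circ_1^\op\mathfrak n_{1,0})\cdot(21)$, with signs dictated by $\Lambda$ and the odd degree of $\mathfrak n_{1,0}$; this is exactly the Eye relation~(d).

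\textbf{Step II.} The operad $H_0(\SC)^!$ equals $qH_0(\SC)^!$ with the unique derivation $\delta_\varphi$ extending the transpose-and-suspend of $\varphi$. Reading off $\varphi$ from \eqref{E:phi}: the map vanishes on $R_v$ and on $e_{1,1}\circ^\op\alpha-\alpha\circ_1 f_2$, and sends $e_{0,2}\circ_1\alpha$ and $(e_{0,2}\cdot(21))\circ_1\alpha$ to $e_{1,1}$. Dualizing, $\delta_\varphi$ sends $\mathfrak n_{1,1}$ to a signed combination of $\mathfrak n_{0,2}\circ_1\mathfrak n_{1,0}$ and $\mathfrak n_{0,2}\circ_2\mathfrak n_{1,0}$ and vanishes on the remaining generators, which is precisely relation (e).

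\textbf{Step III and main obstacle.} For a final cross-check, translate the presentation into its category of dg-algebras: (a)--(c) give items (1), (2), (3) of Proposition~\ref{h0scdual}; (d) gives the first equation of item~(4); and via the compatibility $d_A\mu(x_1,\ldots,x_n)=(d_{\cal P}\mu)(x_1,\ldots,x_n)+\sum\pm\,\mu(\ldots,dx_i,\ldots)$, the vanishing of $d$ on $\mathfrak l_2$, $\mathfrak n_{0,2}$, $\mathfrak n_{1,0}$ recovers the statements that $[\cdot,\cdot]$, the product, and $f$ preserve differentials (items (1), (2), and the second equation of (4)), while the nonzero value of $d\mathfrak n_{1,1}$ recovers the last equation of item~(5). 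The main technical obstacle is to get the Koszul signs right, since $\mathfrak n_{1,0}$ is odd: the signs in the Eye relation and in $d\mathfrak n_{1,1}$ must be extracted from the operadic suspension $\Lambda$ and the sign conventions of \eqref{LP:diff1}--\eqref{LP:diff2}, which is a direct but delicate basis computation.
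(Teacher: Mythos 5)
Your proposal is correct and follows essentially the same route as the paper: the paper also derives the presentation from the distributive-law decomposition $qH_0(\SC)=P(\alpha)\circ H_0(\SCvor)$ (dually $qH_0(\SC)^!=\LP\circ\cal F(\mathfrak n_{1,0})$, Proposition~\ref{P:distrlaw}), reads the Eye relation off the transposed distributive law, and obtains the differential from $\delta_\varphi$, merely phrasing the result first at the level of algebras (Proposition~\ref{h0scdual}) and then stating the operadic presentation as equivalent. Your Step III cross-check is exactly that equivalence run in the opposite direction.
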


\newcommand  {\sev}[1]{{}\save[] +<0pt,40.0pt> *{\whistle{\txt{\tiny\emph #1}}{}} \restore}   

\renewcommand{\fst}[1]{\txt{\tiny #1}}   

\begin{figure}[p]
\input{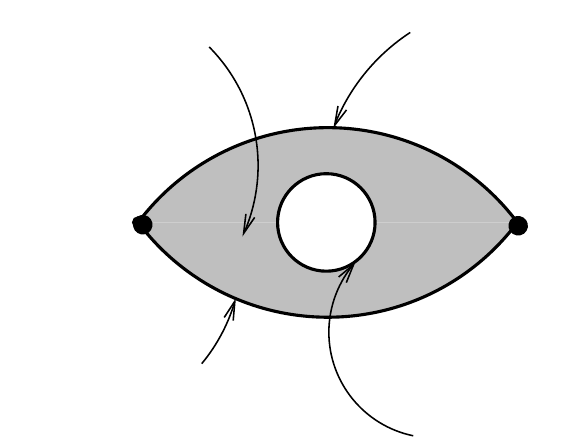_t} 
\vskip 2em
\caption{The manifold $\cal H_2(2,0;\op)$ also called ``The Eye'' and its boudary strata labelled by partially planar trees.}
\label{fig:the-eye}
\end{figure}

The definition of the differential $d : H_0(\SC)^! \to H_0(\SC)^!$ says that the map $f : L \to A$ 
(corresponding to \raisebox{22pt}{\whistle{\fst 1}{-}}) is central 
up to homotopy having the map $\rho: L \otimes A \to A$ (corresponding to
\raisebox{18pt}{\act{\fst 1}{\fst 1}{-}}) as the homotopy operator (see Figure \ref{fig:h-10}).

\begin{figure}[p]
\vskip 2em
\setlength{\unitlength}{3947sp}%
\begingroup\makeatletter\ifx\SetFigFont\undefined%
\gdef\SetFigFont#1#2#3#4#5{%
  \reset@font\fontsize{#1}{#2pt}%
  \fontfamily{#3}\fontseries{#4}\fontshape{#5}%
  \selectfont}%
\fi\endgroup%
\begin{picture}(3500,295)(3079,-1389)
\thinlines
\put(4265,-1348){\circle*{68}}
\put(5956,-1348){\circle*{68}}
\thicklines
\put(4285,-1350){\line( 1, 0){1650}}%
\put(4800,-900){\makebox(0,0)[lb]{\smash{{\SetFigFont{8}{9.6}{\rmdefault}{\mddefault}{\updefault}$\act{1}{1}{-}$}}}}
\put(3700,-1224){\makebox(0,0)[lb]{\smash{{\SetFigFont{8}{9.6}{\rmdefault}{\mddefault}{\updefault}$\ass{\trd 1}{1}{-}$}}}}
\put(6000,-1224){\makebox(0,0)[lb]{\smash{{\SetFigFont{8}{9.6}{\rmdefault}{\mddefault}{\updefault}$\ass{1}{\trd 1}{-}$}}}}
\end{picture}%
\vskip 2em 
\caption{The manifold $\cal H_2(1,1;\op)$ is an interval which parametrizes the up to homotopy centrality 
of $f: L \to A$.}
\label{fig:h-10}
\end{figure}

\medskip

Equivalently, the next proposition gives another description of the operad $H_0(\SC)^!$ as a composition of two operads together with a distributive law between them.

\begin{prop}\label{P:distrlaw} The differential graded operad $H_0(\SC)^!$ is the operad $\LP\circ \cal F(\mathfrak n_{1,0})$, where $\mathfrak n_{1,0}$ has degree $1$ and is a generator of $\cal F(\mathfrak n_{1,0})(\op;\cl)$, with the operadic composition given by the distributive law

$$\begin{array}{ccc}
\cal F(\mathfrak n_{1,0})\circ \LP&\rightarrow & \LP\circ \cal F(\mathfrak n_{1,0}) \\
\mathfrak n_{1,0}\circ_1 \mathfrak l_2 & \mapsto & \mathfrak n_{1,1}\circ_2 \mathfrak n_{1,0}-
( \mathfrak n_{1,1}\circ_2 \mathfrak n_{1,0})\cdot (21)\end{array}$$
and the differential given by $d(\mathfrak n_{1,1})= \mathfrak n_{0,2}\circ_1 \mathfrak n_{1,0}-
( \mathfrak n_{1,1}\circ_2 \mathfrak n_{1,0})\cdot (21)$ and $0$ elsewhere.
\end{prop}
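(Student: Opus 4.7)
The plan is to recognize the presentation of $H_0(\SC)^!$ given in Proposition \ref{L:hsc} as arising from a Koszul-type distributive law between the Leibniz pair operad $\LP$ and the free operad on one generator $\cal F(\mathfrak n_{1,0})$. First, I would observe that the generators of $H_0(\SC)^!$ split naturally: the triple $(\mathfrak l_2,\mathfrak n_{0,2},\mathfrak n_{1,1})$ together with relations (a), (b), (c) of Proposition \ref{L:hsc} is exactly the presentation of $\LP$ given in Definition \ref{D:LP}, while the generator $\mathfrak n_{1,0}$ taken alone spans the 2-collection generating the free operad $\cal F(\mathfrak n_{1,0})$.

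Next, I would reinterpret the "Eye" relation (d) as a rewriting rule. Since $\mathfrak n_{1,0}$ has closed input and open output, the only nontrivial element of $\cal F(\mathfrak n_{1,0})\circ\LP$ not already in normal form is obtained by grafting $\mathfrak n_{1,0}$ onto the unique closed-output $\LP$-generator $\mathfrak l_2$. The Eye relation expresses this composite as an element of $\LP\circ\cal F(\mathfrak n_{1,0})$, and thus constitutes precisely the datum of a distributive law
\[
\lambda:\cal F(\mathfrak n_{1,0})\circ\LP\longrightarrow\LP\circ\cal F(\mathfrak n_{1,0}),
\qquad
\mathfrak n_{1,0}\circ_1\mathfrak l_2\longmapsto \mathfrak n_{1,1}\circ_2\mathfrak n_{1,0}-(\mathfrak n_{1,1}\circ_2\mathfrak n_{1,0})\cdot(21).
\]
I would then invoke Markl's distributive-law criterion (or its treatment in \cite[Chapter 8]{LodVal}): once $\lambda$ is coherent, the quotient of $\LP\vee\cal F(\mathfrak n_{1,0})$ by the single mixed relation (the Eye relation) has $\LP\circ\cal F(\mathfrak n_{1,0})$ as underlying 2-collection, which coincides with $H_0(\SC)^!$.

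The hard part is checking coherence. By the standard Diamond argument one only has to inspect the critical monomials, namely the graftings of $\mathfrak n_{1,0}$ on the left of each $\LP$-relation whose top operation is $\mathfrak l_2$: essentially Jacobi, the Leibniz rule, and the Lie-algebra morphism condition of (c). For each, one rewrites in the two possible orders using $\lambda$ and the $\LP$-relations, and verifies the outcomes coincide; the matching ultimately reduces to the fact that the action $\mathfrak n_{1,1}$ is a Lie-algebra morphism (the last identity of (c) in Proposition \ref{L:hsc}). This computation is the essential content of the proposition.

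Finally, since only $\mathfrak n_{1,1}$ carries a nonzero differential in $H_0(\SC)^!$ and its image $\mathfrak n_{0,2}\circ_1\mathfrak n_{1,0}-\mathfrak n_{0,2}\circ_2\mathfrak n_{1,0}$ already lies in $\LP\circ\cal F(\mathfrak n_{1,0})$, the differential extends uniquely as a derivation to the composite operad. One verifies $d^2=0$ and compatibility with $\lambda$ on the single generating composite $\mathfrak n_{1,0}\circ_1\mathfrak l_2$, where the derivation property together with the Eye relation forces precisely the formula $d(\mathfrak n_{1,1})=\mathfrak n_{0,2}\circ_1\mathfrak n_{1,0}-\mathfrak n_{0,2}\circ_2\mathfrak n_{1,0}$ recorded in (e). This concludes the identification $H_0(\SC)^!\cong\LP\circ\cal F(\mathfrak n_{1,0})$ as differential graded operads.
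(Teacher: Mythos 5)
The paper states this proposition without proof --- it is offered only as an ``equivalent'' reformulation of the presentation in Proposition \ref{L:hsc} --- so your argument supplies a justification the authors leave implicit, and its outline is correct: relations (a)--(c) of Proposition \ref{L:hsc} present exactly $\LP$, the Eye relation is the unique mixed relation and is precisely the datum of a rewriting rule $\cal F(\mathfrak n_{1,0})\circ\LP\to\LP\circ\cal F(\mathfrak n_{1,0})$, and the distributive-law criterion of \cite[Chapter 8]{LodVal} reduces the identification of the underlying $2$-collection of the quotient with $\LP\circ\cal F(\mathfrak n_{1,0})$ to a coherence check on critical monomials. Three refinements. First, your list of critical monomials is over-inclusive: $\mathfrak n_{1,0}$ has closed input while the Leibniz-rule and Lie-morphism relations have open output, so the only $\LP$-relation onto which $\mathfrak n_{1,0}$ can be grafted is Jacobi; the coherence check is the single verification that applying the Eye rewriting twice to $\mathfrak n_{1,0}\circ_1(\mathfrak l_2\circ_1\mathfrak l_2)$, summed over the Jacobi cyclic permutations, yields zero, which indeed follows from the Lie-morphism relation for $\mathfrak n_{1,1}$ (the twelve resulting monomials cancel in pairs, and since $\mathfrak n_{1,0}$ occurs exactly once in each monomial no Koszul signs interfere). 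As this computation is the entire mathematical content of the proposition, it should be written out rather than deferred. Second, the differential is not ``forced'' by compatibility with the Eye relation: applying $d$ to both sides of that relation only yields a constraint on $d\mathfrak n_{1,1}$ which, for instance, $d=0$ also satisfies; rather, $d\mathfrak n_{1,1}$ is the one prescribed in Proposition \ref{L:hsc}(e), and one checks that the induced derivation is well defined on the quotient and squares to zero. Third, you have tacitly (and correctly) corrected two misprints in the statement: the second term of the differential must be $(\mathfrak n_{0,2}\cdot(21))\circ_1\mathfrak n_{1,0}$ rather than $(\mathfrak n_{1,1}\circ_2\mathfrak n_{1,0})\cdot(21)$, which has the wrong colours, and $\mathfrak n_{1,0}$ has degree $-1$, as in Proposition \ref{L:hsc}, not $+1$.
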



\section{Open Closed Homotopy Algebras}\label{OCHA}

In analogy to what we have done in section \ref{shlp}, in this section we will show that an OCHA is an algebra over 
the operad $\Omega[(\Lambda H_0(\SC))^*]$. Here again we will introduce the operad $\mathcal{OC}_\infty$ using 
partially planar trees and will show that this operad coincides with $\Omega(\Lambda H_0(\SC)^*)$. It is then a minimal operad. We prove in theorem \ref{T:minmodOCHA} that it is a resolution of the differential graded operad $H_0(\SC)^!$. Finally we compute its homology, prove that is it a resolution of it in the category of algebras in $2$-collection in theorem \ref{T:homologycalculus}, but not in the category of operads.

\subsection{The OCHA operad}

The OCHA operad $\cal{OC}_\infty$ is generated by the partially planar corollae $\mathfrak l_n$ and $\mathfrak n_{p,q}$ with $n \geqslant 2$ 
and $2p + q \geqslant 2$. 
Those corollae are exactly the same we used for the operad $\LP_\infty$. The only difference is that in the case 
of $\cal{OC}_\infty$ we have generators of the form $\mathfrak n_{p,0}$: 
\renewcommand{\fst}{{}\save[] +<-28pt,4pt>*{\txt{\tiny 1}}+<20pt,0pt>*{\txt{\tiny 2}}+<18pt,0pt>*{\cdots}+<18pt,0pt>*{\txt{\tiny\it p}} \restore}
\[
 \raisebox{-1.5em}{$\mathfrak n_{p,0}    \ =\ $} \Closedcorolla{\fst}{-} \ \raisebox{-20pt}{ . } \hskip 20pt    
\]
In other words, we drop the condition $q > 0$ that we had in the case of strong homotopy Leibniz pairs.
The operadic composition, symmetric group action and differential  of $\cal{OC}_\infty$ are similar to the ones of $\LP_\infty$. The argument used in section \ref{S:LPinfinity} also applies to the following proposition.

\begin{prop} \label{OCHA:iso}
 The operad $\mathcal{OC}_\infty$ coincides with $\Omega(\Lambda H_0(\SC)^*)$.
\end{prop}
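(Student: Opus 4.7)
The plan is to mirror the argument from Section~\ref{S:LPinfinity}, adapted to $\SC$ in place of $\SCvor$. First I would identify the 2-collection structure of $\overline{H_0(\SC)}$. By Theorem~\ref{Hsc}, an $H_0(\SC)$-algebra is a Gerstenhaber algebra $G$ together with an associative algebra $A$ that is unital over $G$ via a central homomorphism $w:G\rightarrow A$, so the analogue of Corollary~\ref{C:freescvor} gives the free $H_0(\SC)$-algebra as $H_0(\SC)(V)_\cl=S^+(V_\cl)$ and $H_0(\SC)(V)_\op=S^+(V_\cl)\otimes T(V_\op)$. Centrality of $w$ means each commutative monomial appears only once regardless of position among the associative factors. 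Translating this into arities,
\[
H_0(\SC)(n,0;\cl)\cong \kfield,\qquad H_0(\SC)(p,0;\op)\cong \kfield,\qquad H_0(\SC)(p,q;\op)\cong \kfield\otimes\kfield[S_q],
\]
with the trivial $S_n$- and $S_p$-actions and the regular $S_q$-action. After stripping the identities in arities $(1,0;\cl)$ and $(0,1;\op)$, the reduced collection $\overline{H_0(\SC)}$ is supported exactly on the pairs $(n,0;\cl)$ with $n\geq 2$ and $(p,q;\op)$ with $2p+q\geq 2$.

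Next I would compare this with the generators of $\mathcal{OC}_\infty$. Applying $(\Lambda\,\cdot\,)^*$ twists by the sign representation and shifts degree: an element of arity $k$ in degree $0$ lands in degree $k-1$ in $(\Lambda H_0(\SC))^*$, and in $\Omega(\Lambda H_0(\SC)^*)$ it becomes a generator in degree $k-2$ via the additional $s^{-1}$. This yields one generator per partially planar corolla $\mathfrak{l}_n$ (for $n\geq 2$, degree $n-2$) and one per $\mathfrak{n}_{p,q}$ (for $2p+q\geq 2$, degree $p+q-2$), with the sign representation precisely encoding the antisymmetry of the wiggly (closed) inputs of $\mathfrak{l}_n$ and $\mathfrak{n}_{p,q}$. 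As a free graded operad with $\mathbb{S}$-action, $\Omega(\Lambda H_0(\SC)^*)$ is therefore isomorphic to the free 2-colored operad on the same generators that define $\mathcal{OC}_\infty$.

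Finally, I would verify that the differentials coincide. By definition the cobar differential on $\Omega(\cal C)$ is the unique derivation extending the desuspended reduced cocomposition $\cal C\rightarrow\cal C\circ\cal C$. Dualizing the partial compositions of $H_0(\SC)$ reads off, on each partially planar corolla, the sum over all splittings of its vertex into two vertices joined by an internal edge; this is the vertex-expansion formula $dT=\sum_{T=T'/e}\pm T'$. The signs are the ones induced by $\Lambda$, and they are the same as those in formulas~\eqref{LP:diff1} and~\eqref{LP:diff2}, since the only change from $\LP_\infty$ to $\mathcal{OC}_\infty$ is the presence of the extra arities $(p,0;\op)$, which contribute the expected extra terms in which a $\mathfrak{n}_{p,0}$ corolla appears at either end of the new internal edge.

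The principal obstacle is purely combinatorial sign-bookkeeping in verifying the match between the cobar differential and the vertex-expansion differential on the new corollae $\mathfrak{n}_{p,0}$; the rest is a direct transcription of the $\LP_\infty$ argument, now using the full 2-collection $\overline{H_0(\SC)}$ instead of its restriction $\overline{H_0(\SCvor)}$.
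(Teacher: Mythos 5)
Your proposal is correct and follows essentially the same route as the paper, which itself only remarks that the argument of Section~\ref{S:LPinfinity} carries over once the condition $q>0$ is dropped, so that the extra corollae $\mathfrak n_{p,0}$ appear. The only blemish is your free-algebra formula $H_0(\SC)(V)_\op=S^+(V_\cl)\otimes T(V_\op)$, which omits the summand $S(V_\cl)$ coming from the new components $H_0(\SC)(p,0;\op)\cong\kfield$; but your subsequent arity-by-arity description is the correct one and is all the argument actually uses.
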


%
%
%
%


\begin{thm}\label{T:minmodOCHA}
There is a quasi-isomorphism $\cal{OC}_\infty \to H_0(\SC)^!$. 
\end{thm}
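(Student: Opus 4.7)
The plan is to construct an explicit morphism of dg operads $\Phi:\mathcal{OC}_\infty\to H_0(\SC)^!$ by specifying it on generators, and to prove it is a quasi-isomorphism using the Koszulity of $H_0(\SC)$. By Proposition~\ref{OCHA:iso}, $\mathcal{OC}_\infty$ is the cobar construction $\Omega(\Lambda H_0(\SC)^*)$ of a dg cooperad, so by the universal property of $\Omega$ a morphism $\Phi$ out of $\mathcal{OC}_\infty$ is equivalent to a twisting morphism $\kappa:\Lambda H_0(\SC)^*\to H_0(\SC)^!$ satisfying the Maurer--Cartan equation $\partial\kappa+\kappa\star\kappa=0$.

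First, I would define $\kappa$ on the arity-two cogenerators dual to $f_2$, $e_{0,2}$, $e_{1,1}$, $\alpha$ by sending them, up to the signs and shifts dictated by the operadic suspension $\Lambda$, to the corresponding generators $\mathfrak{l}_2,\mathfrak{n}_{0,2},\mathfrak{n}_{1,1},\mathfrak{n}_{1,0}$ of $H_0(\SC)^!$ described in Proposition~\ref{L:hsc}, and set $\kappa$ equal to zero on all cogenerators of higher weight. The induced morphism $\Phi$ then sends each partially planar corolla $\mathfrak{l}_n$ or $\mathfrak{n}_{p,q}$ to the corresponding generator of $H_0(\SC)^!$ whenever it exists (i.e.\ when $n=2$ or $2p+q=2$), and to zero otherwise.

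Next, I would verify compatibility with the differentials. The differential on $\mathcal{OC}_\infty$ has two sources: a quadratic part dual to the composition in $H_0(\SC)$ (in particular dualizing the compositions $e_{0,2}\circ_1\alpha=e_{1,1}$, $(e_{0,2}\cdot(21))\circ_1\alpha=e_{1,1}$, and $e_{1,1}\circ_2\alpha=\alpha\circ_1 f_2$), and an internal linear part induced on $\Lambda H_0(\SC)^*$ by the quadratic-linear relations. On the target side, the differential of $H_0(\SC)^!$ vanishes on every generator except $d\mathfrak{n}_{1,1}=\mathfrak{n}_{0,2}\circ_1\mathfrak{n}_{1,0}-\mathfrak{n}_{0,2}\circ_2\mathfrak{n}_{1,0}$ (Proposition~\ref{L:hsc}(e)). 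Checking the Maurer--Cartan equation for $\kappa$ then reduces to a finite combinatorial computation on the arity-two and arity-three cogenerators, reproducing the relations (a)--(e) of Proposition~\ref{L:hsc}.

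Finally, to conclude that $\Phi$ is a quasi-isomorphism, I would appeal to the Koszul duality for quadratic-linear operads developed in~\cite{GTV09}. By Theorem~\ref{T:SCKoszul} the operad $H_0(\SC)$ is Koszul, so (\ref{E:Koszulresol}) provides the Koszul resolution $\Omega(H_0(\SC)^{\ac})\xrightarrow{\sim}H_0(\SC)$. Linearly dualizing this resolution and using the identification $H_0(\SC)^!=(\Lambda H_0(\SC)^{\ac})^*$ from~(\ref{E:Koszuldual}), together with the fact that the relevant dual cooperad is $\Lambda H_0(\SC)^*$, produces the desired quasi-isomorphism $\Omega(\Lambda H_0(\SC)^*)\xrightarrow{\sim}H_0(\SC)^!$. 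The main obstacle lies precisely here: in the purely quadratic case the analogous statement is standard via the distributive-law decomposition $qH_0(\SC)^!=\LP\circ\mathcal{F}(\mathfrak{n}_{1,0})$ of Proposition~\ref{P:distrlaw}, but in the quadratic-linear setting one must track carefully how the internal differential on $\Lambda H_0(\SC)^*$ (coming from the relations $e_{0,2}\circ_1\alpha=e_{1,1}$) matches the differential $d\mathfrak{n}_{1,1}$ on $H_0(\SC)^!$, so that the comparison takes place at the level of dg operads; filtering both sides by the weight so that the $E_1$-page recovers the quadratic Koszul resolution of $qH_0(\SC)^!$ should allow one to conclude.
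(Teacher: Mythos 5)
Your overall strategy is the same as the paper's: both proofs rest on the Koszulity of $H_0(\SC)$ (Theorem \ref{T:SCKoszul}) and deduce the quasi-isomorphism $\OC_\infty\to H_0(\SC)^!$ from the Koszul resolution $\Omega(H_0(\SC)^{\ac})\xrightarrow{\sim}H_0(\SC)$ by a duality argument. The genuine gap is in your last step. ``Linearly dualizing'' the resolution does not produce a map of the required shape: the dual of $\Omega(H_0(\SC)^{\ac})\to H_0(\SC)$ is a quasi-isomorphism $H_0(\SC)^*\to\bigl(\Omega(H_0(\SC)^{\ac})\bigr)^*=B\bigl((H_0(\SC)^{\ac})^*\bigr)$, i.e.\ a map \emph{into} a bar construction, whereas you need a map \emph{out of} the cobar construction $\Omega(\Lambda H_0(\SC)^*)$ and \emph{onto} $(\Lambda H_0(\SC)^{\ac})^*=H_0(\SC)^!$. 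The missing ingredients are exactly the ones the paper supplies: apply $\Lambda$ and the bar functor $B$ to the resolution (using that $B$ preserves quasi-isomorphisms), precompose with the unit of the $(\Omega,B)$ adjunction $\Lambda H_0(\SC)^{\ac}\to B\Omega(\Lambda H_0(\SC)^{\ac})$, which is a quasi-isomorphism, and identify $B\Lambda H_0(\SC)=\Omega(\Lambda H_0(\SC)^*)^*$ before dualizing. Your fallback suggestion (filter by weight and reduce to the quadratic case $qH_0(\SC)$) is a viable alternative in the spirit of \cite{GTV09}, but as written it is a one-line gesture: you would need to specify the filtration on both sides (e.g.\ by the number of $\mathfrak n_{1,0}$-vertices), check that $\Phi$ is filtered, and identify the $E_1$-pages with the quadratic resolution coming from the distributive law of Proposition \ref{P:distrlaw}, none of which is done.

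Two smaller points. First, your parenthetical ``when $n=2$ or $2p+q=2$'' contradicts your own previous sentence: $\mathfrak n_{1,1}$ has $2p+q=3$ and yet must be sent to the generator $\mathfrak n_{1,1}$ of $H_0(\SC)^!$; the correct condition is that the corolla has at most two leaves, i.e.\ $(p,q)\in\{(1,0),(0,2),(1,1)\}$ or $n=2$. Second, the cooperad $\Lambda H_0(\SC)^*$ carries no internal differential, since $H_0(\SC)$ is concentrated in homological degree $0$; the ``internal linear part'' you describe is simply the portion of the single cobar differential dual to the compositions $e_{0,2}\circ_1\alpha=e_{1,1}$ and $e_{1,1}\circ_2\alpha=\alpha\circ_1 f_2$ in $H_0(\SC)$, not a separate differential (the internal differential $d_\varphi$ lives on $H_0(\SC)^{\ac}$, not on $\Lambda H_0(\SC)^*$). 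Neither of these is fatal, but both should be cleaned up.
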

\begin{proof}
Since $H_0(\SC)$ is Koszul, one has the quasi-isomorphism (\ref{E:Koszulresol}):
\begin{equation*}
\Omega(H_0(\SC)^{\ac})\xrightarrow{\sim} H_0(\SC)
\end{equation*}
where $H_0(\SC)^{\ac}$ is a differential graded cooperad related to $H_0(\SC)^!$ by:
$$H_0(\SC)^{\ac}=(\Lambda H_0(\SC)^!)^*.$$
There is an adjunction $(\Omega,B)$ between cooperads and operads, where $B$ denotes the bar construction of an operad and $\Omega$ the cobar construction of a cooperad. The unit of the adjunction is a quasi-isomorphism and the bar construction $B$  preserves quasi-isomorphism (see e.g. \cite{GetJon90}). Furthermore, in the case of finite dimensional operads one has $B\cal P=\Omega(\cal P^*)^*$. Combining these results with equation (\ref{E:Koszulresol}) one gets:
\begin{equation*}
\Lambda H_0(\SC)^{\ac}\xrightarrow{\sim} B\Omega(\Lambda H_0(\SC)^{\ac})\xrightarrow{\sim} B\Lambda H_0(\SC)=\Omega(\Lambda H_0(\SC)^*)^*
\end{equation*}
dualizing the quasi-isomorphism, one gets:
\[\Psi: \cal{OC}_\infty = \Omega(\Lambda H_0(\SC)^*)\xrightarrow{\sim} (\Lambda H_0(\SC))^{\ac})^* = H_0(\SC)^!. \hspace*{\stretch{1}} \qedhere \] 
\end{proof}

\medskip

\renewcommand{\fst}[1]{\txt{\tiny\emph #1}}   
\renewcommand{\snd}[2]{{}\save[] +<00.0pt,07.0pt>*{\act{\txt{\tiny\emph #1}}{\txt{\tiny\emph #2}}{}}  \restore}   
\renewcommand{\trd}[1]{{}\save[] +<00.0pt,08.0pt>*{\whistle{\txt{\tiny\emph #1}}{}}                   \restore}   
\renewcommand{\fth}[2]{{}\save[] +<00.0pt,07.0pt>*{\ass{\txt{\tiny\emph #1}}{\txt{\tiny\emph #2}}{}}  \restore}   
\renewcommand{\fif}[2]{{}\save[] +<00.0pt,07.0pt>*{\lie{\txt{\tiny\emph #1}}{\txt{\tiny\emph #2}}{}}  \restore}   
\renewcommand{\six}[1]{{}\save[] +<-1.0pt,08.2pt>*{\whistle{\txt{\tiny\emph #1}}{}}                   \restore}

Because of the definition of the unit of the adjunction, the quasi-isomorphism
\begin{equation}\label{quism}
 \Psi : \cal{OC}_\infty \to H_0(\SC)^!
\end{equation}
 can be given explicitly in terms of trees in the following way:
on the generators $\mathfrak l_2$, $\mathfrak  n_{1,1}$, $\mathfrak  n_{0,2}$ and $\mathfrak  n_{1,0}$ it acts as the identity map and it vanishes on all the others 
generators of the free operad $\cal{OC}_\infty$.

%
%
%
%

\subsection{The homology of the OCHA operad}\hfill\break

\smallskip

The aim of this section is to prove analogous results for the swiss-cheese operad to the following one for the little disks operad $\cal C$

\begin{itemize}
 \item The zeroth homology of $\cal C$ is Koszul dual to the top homology. More precisely $H_0(\cal C)$ is the operad $\Com$ for commutative algebras.
For any $n$, $H_*(\cal C(n))$ has top homology in degree $n-1$. It is thus suboperad of $H(\cal C)$ denoted by $H_{\rm top}(\cal C)$ 
and coincides with $\Lambda^{-1}\Lie$ where $\Lie$ is the operad for Lie algebras.
\item One has that $\Lie_\infty=\Omega\Lie^{\ac}=\Omega((\Lambda\Com)^*)=\Omega((\Lambda H_0(\cal C))^*)\rightarrow \Lie$
is a quasi-isomorphism.
\end{itemize}

These two items express the same theorem: $\Lie$ and $\Com$ are Koszul dual operads. But the first one gives a geometric interpretation of this fact, whereas the second one gives an interpretation in deformation theory.

We have seen in theorem \ref{T:minmodOCHA} that $H_0(\SC)$ is Koszul and  that there is a quasi-isomorphism of differential graded operads 
$\OC_\infty=\Omega((\Lambda H_0(\SC))^*) \rightarrow H_0(\SC)^!$. It gives a solution to the second item for the swiss-cheese operad, except that $H_0(\SC)^!$ is a differential graded operad and can not be obtained as a suboperad of $H(\SC)$. Nevertheless we introduce the operad $\OC$ that plays the role of the top homology in
$H_*(\SC)$. It does not work as smoothly as for the little disks operad because if we would only take top homology in $H_*(\SC)$, then we 
would not obtain an operad.
We prove in Theorem \ref{T:homologycalculus} that, up to some suspension, the homology of $H_0(\SC)^!$ is $\OC$. 
As a consequence, we have that $H_*(\OC_\infty)=\OC$.
If one had 
a quasi-isomorphism of operads $\OC_\infty\rightarrow \OC$, then one would have the exact analogy with the little disks case.
The non-formality  proved in proposition \ref{P:nonformality}, states that such a quasi-isomorphism does not exist.
Nevertheless, corollary \ref{C:isoalg} shows that there is another structure, algebras in the category of $2$-collections, such that $\OC_\infty\rightarrow \OC$ is a quasi-isomorphism of algebras.

The quasi-isomorphism $\OC_\infty \to \OC$ in the category of $\mathcal{L}_\infty$-modules has been studied
by the second author in \cite{Hoefel09}. It obviously depends on the fact that the homology of $\cal{OC}_\infty$ 
is $\cal{OC}$. Unfortunatelly the proof of the that fact presented in \cite{Hoefel09} contains a mistake. 
The results of the present section constitute a correct proof of the above fact, hence the 
quasi-isomorphism as $\mathcal{L}_\infty$-modules holds as well.

\begin{defn}
 We will denote by $\cal{OC}$ the suboperad of $H(\SC)$ generated by top dimensional homology classes, that is, by the 
 generators of $H_1(\SC(2,0;\cl))$, $H_0(\SC(0,2;\op))$ and $H_0(\SC(1,0;\op))$. 
\end{defn}

Before we proceed, it is necessary to introduce the concept of suspension with respect to a color. 
We will restrict attention to 2-collections, but it is not difficult to extend the definition to a 
more general context. 

\begin{defn}\label{E:ColorS}
Given a 2-collection $\cal P$, the suspension with respect to the color $\cl$ will be denoted by $\Lambda_{\cl} \cal P$ and is defined 
as follows:
\begin{equation*}
  \Lambda_{\cl} \cal P (n,m;x) = \left\{ \begin{array}{ll} 
                                           s^{1-n} P (n,m;x) \otimes ({\rm sgn_n} \otimes \kfield)  & {\rm if}\  x = \cl, \\
                                           s^{-n} P (n,m;x) \otimes  ({\rm sgn_n} \otimes \kfield)  & {\rm if}\  x = \op.
                                         \end{array} \right.
\end{equation*}
where $({\rm sgn_n} \otimes \kfield)$ is the one dimensional representation of $S_n \times S_m$ given by the tensor product
of the signature representation of $S_n$ and the trivial representation of $S_m$.

It is readily seen that a structure of $\cal P$-algebras on the pair $(V_\cl,V_\op)$ is equivalent to a structure of $\Lambda_\cl \cal P$-algebra on the pair $(sV_\cl,V_\op)$.
\end{defn}

The next lemma follows from the description of  $H(\SC)$ given in theorem \ref{Hsc}.

\renewcommand{\fst}[1]{\txt{\tiny\emph #1}}   
\renewcommand{\snd}[2]{{}\save[] +<00.0pt,07.0pt>*{\act{\txt{\tiny\emph #1}}{\txt{\tiny\emph #2}}{}}  \restore}   
\renewcommand{\trd}[1]{{}\save[] +<00.0pt,08.0pt>*{\whistle{\txt{\tiny\emph #1}}{}}                   \restore}   
\renewcommand{\fth}[2]{{}\save[] +<00.0pt,07.0pt>*{\ass{\txt{\tiny\emph #1}}{\txt{\tiny\emph #2}}{}}  \restore}   
\renewcommand{\fif}[2]{{}\save[] +<00.0pt,07.0pt>*{\lie{\txt{\tiny\emph #1}}{\txt{\tiny\emph #2}}{}}  \restore}   
\renewcommand{\six}[1]{{}\save[] +<-1.0pt,08.2pt>*{\whistle{\txt{\tiny\emph #1}}{}}                   \restore}   

\begin{lem}
The operad $\Lambda_\cl\cal{OC}$ has the following presentation
\[ 
\raisebox{-4pt}{$\cal F \Big( $}    
  \raisebox{14pt}{\lie{\fst 1}{\fst 2}{~}}, 
  \raisebox{14pt}{\ass{\fst 1}{\fst 2}{-}}, 
  \raisebox{18pt}{\whistle{\fst 1}{-}} \; 
\raisebox{-4pt}{$\Big) \Big/ R$}
\]
where the generator $\tilde{\mathfrak l}_2=$\raisebox{18pt}{\lie{\fst 1}{\fst 2}{~}} has degree $0$ and is antisymmetric, the generator $\tilde{\mathfrak n}_{0,2}=$   \raisebox{18pt}{\ass{\fst 1}{\fst 2}{-}} has degree $0$
and the generator $\tilde{\mathfrak n}_{1,0}=$  \raisebox{24pt}{\whistle{\fst 1}{-}} has degree $-1$. \\[1em]

The ideal $R$ is generated by the following relations:
\begin{enumerate}[a)]
 \item \hskip 1em
       The generator \raisebox{14pt}{$\ass{\fst 1}{\fst 2}{-}$} satisfies associativity and 
       \raisebox{14pt}{$\lie{\fst 1}{\fst 2}{~}$} satisfies Jacobi identity; \\[3ex]
 \item \hskip 1em
       \raisebox{4pt}{\ass{\trd1 }{\fst1}{-}} \hskip 1.5em  $-$ \hskip 1em  \raisebox{4pt}{\ass{\fst1 }{\trd1}{-}}
       \hskip 1em ${\emph =}$ \hskip .5em 0, \hskip 1em the centrality of \raisebox{20pt}{\whistle{\fst1 }{-}} .
\end{enumerate}

\end{lem}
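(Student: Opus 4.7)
The plan is to read off the presentation directly from Theorem \ref{Hsc} together with the computation of $\Lambda_\cl$ from Definition \ref{E:ColorS}. By Theorem \ref{Hsc} an algebra over $H(\SC)$ consists of a Gerstenhaber algebra $G$ and a unital associative algebra $A$ over the commutative algebra $G$, with central unit map $u:G\to A$; the three top-dimensional homology generators defining $\cal{OC}$ pick out, respectively, the Lie bracket of $G$ (from $H_1(\SC(2,0;\cl))$), the associative product of $A$ (from $H_0(\SC(0,2;\op))$) and the unit $u$ (from $H_0(\SC(1,0;\op))$). These encode exactly the structure remaining after one discards the commutative Gerstenhaber product on $G$ and the $G$-module action on $A$.

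First I would verify the degrees and symmetries. The shift $s^{1-n}$ on the closed-output piece and $s^{-n}$ on the open-output piece, twisted by $\sgn_n$, takes $\mathfrak{l}_2$ (degree $1$, carrying the trivial $S_2$-action because the label-swap on $\cal D_2(2)\simeq S^1$ is the orientation-preserving antipodal map) to a degree $0$ antisymmetric generator $\tilde{\mathfrak{l}}_2$; it takes $\mathfrak{n}_{0,2}$ to a degree $0$ generator with the regular $S_2$-representation; and it takes $\mathfrak{n}_{1,0}$ to a degree $-1$ generator $\tilde{\mathfrak{n}}_{1,0}$, matching the statement.

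The relations are then read off from the algebraic structure provided by Theorem \ref{Hsc}: the Gerstenhaber bracket satisfies Jacobi, the product on $A$ is associative, and centrality of $u$ translates into $\mathfrak{n}_{0,2}\circ_1 \mathfrak{n}_{1,0}=\mathfrak{n}_{0,2}\circ_2 \mathfrak{n}_{1,0}$. Applying $\Lambda_\cl$ turns these (with the expected suspension signs) into the relations (a) and (b) of the statement, giving a well-defined surjective operad morphism from $\cal F(\tilde{\mathfrak{l}}_2,\tilde{\mathfrak{n}}_{0,2},\tilde{\mathfrak{n}}_{1,0})/R$ onto $\Lambda_\cl\cal{OC}$.

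The main obstacle is completeness: showing that no further relation is imposed by $H(\SC)$. For this I would introduce a normal form for elements of the quotient operad in each arity --- closed part a Lie monomial in the $\tilde{\mathfrak{l}}_2$'s, open part an associative word whose letters are either open variables or whistles of Lie monomials, with centrality used to permute whistles past associative factors --- and compare its dimension with that of $\Lambda_\cl\cal{OC}(n,m;x)\subseteq \Lambda_\cl H(\SC)(n,m;x)$. The spanning set produced by Voronov's description of $H(\SCvor)$ combined with Cohen's computation of $H(\cal D_2)$ will then match the normal form arity by arity, completing the argument.
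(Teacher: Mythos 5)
Your proposal is correct and takes essentially the same route as the paper, which offers no argument beyond the remark that the lemma ``follows from the description of $H(\SC)$ given in Theorem \ref{Hsc}''; reading the generators, degrees and relations off Voronov's theorem together with the definition of $\Lambda_{\cl}$ is exactly what is intended. The completeness step you add --- a normal form whose open part is an associative word in open variables and whistles of Lie monomials, compared in each arity against the Cohen--Voronov computation --- is precisely the normal form the paper itself invokes later, in the fourth step of the proof of Theorem \ref{T:homologycalculus}, where $\Lambda_{\cl}\OC$ is identified with the Koszul dual algebra $C^{\ac}$.
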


\begin{thm} \label{T:homologycalculus}
 The homology of the operad $H_0(\SC)^!$ is the operad $\Lambda_\cl\cal{OC}$. \\
 
\end{thm}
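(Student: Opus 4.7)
My plan is to compute $H_*(H_0(\SC)^!)$ directly using the description $H_0(\SC)^! = \LP\circ\cal F(\mathfrak n_{1,0})$ given in Proposition \ref{P:distrlaw}. Every element of $H_0(\SC)^!(p,q;x)$ has a unique normal form consisting of an $\LP$-operation $\omega \in \LP(p-k,q+k;x)$ together with a choice $S$ of $k$ open inputs of $\omega$ to be substituted with $\mathfrak n_{1,0}$, so the 2-collection splits as $\bigoplus_k \LP(p-k,q+k;x)$ with appropriate shuffle decoration. The first step is to construct a candidate operad morphism $\phi\colon\Lambda_\cl\cal{OC}\longrightarrow H_*(H_0(\SC)^!)$ by sending $\tilde{\mathfrak l}_2,\tilde{\mathfrak n}_{0,2},\tilde{\mathfrak n}_{1,0}$ to the homology classes of their namesakes. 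Jacobi and associativity hold on the nose in $H_0(\SC)^!$; the centrality relation $\tilde{\mathfrak n}_{0,2}\circ_1 \tilde{\mathfrak n}_{1,0}=\tilde{\mathfrak n}_{0,2}\circ_2 \tilde{\mathfrak n}_{1,0}$ holds in $H_*$ because the difference is exactly $d(\mathfrak n_{1,1})$. This shows $\phi$ is well-defined.

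The second step is to prove $\phi$ is an isomorphism. I will filter $H_0(\SC)^!$ by the integer $k=\#\mathfrak n_{1,0}$ in the normal form. The key observation is that $d$ increases this filtration by exactly one: the Leibniz-rule expression $d(\mathfrak n_{1,1})=\mathfrak n_{0,2}\circ_1\mathfrak n_{1,0}-\mathfrak n_{0,2}\circ_2\mathfrak n_{1,0}$ introduces one $\mathfrak n_{1,0}$, and whenever this $\mathfrak n_{1,0}$ falls above an $\mathfrak l_2$ in a bigger tree, the distributive law $\mathfrak n_{1,0}\circ_1\mathfrak l_2=\mathfrak n_{1,1}\circ_2\mathfrak n_{1,0}-(\mathfrak n_{1,1}\circ_2\mathfrak n_{1,0})\cdot(21)$ preserves $\#\mathfrak n_{1,0}$. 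Hence $d_0=0$ on the associated graded $E^0_k\cong\bigoplus\LP(p-k,q+k;x)$, and the non-trivial information sits in $d_1$.

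To analyze $d_1$ I will construct an explicit deformation retract of $(H_0(\SC)^!,d)$ onto $\Lambda_\cl\cal{OC}$ (with zero differential) in the category of 2-collections. The section $\iota\colon\Lambda_\cl\cal{OC}\hookrightarrow H_0(\SC)^!$ sends a normal-form word built from $\tilde{\mathfrak l}_2,\tilde{\mathfrak n}_{0,2},\tilde{\mathfrak n}_{1,0}$ (modulo centrality) to the corresponding $\LP\circ\cal F(\mathfrak n_{1,0})$ element whose $\LP$-part contains no $\mathfrak n_{1,1}$; the projection $\Phi$ sends such an element to itself and kills any normal form in which $\mathfrak n_{1,1}$ appears. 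The homotopy $h$ is defined, recursively on trees, by picking a distinguished $\mathfrak n_{1,0}\in S$ which sits above an open input of some $\mathfrak n_{0,2}$-vertex (or of an $\mathfrak n_{1,1}$-vertex) and replacing the pair $(\mathfrak n_{0,2},\mathfrak n_{1,0})$ (respectively $(\mathfrak n_{1,1},\mathfrak n_{1,0})$) by a single $\mathfrak n_{1,1}$ (respectively using the distributive law), thereby decreasing $\#\mathfrak n_{1,0}$ by one. A combinatorial induction on the total number of $\mathfrak l_2$'s, $\mathfrak n_{1,1}$'s, and $\mathfrak n_{1,0}$'s in the normal form then gives $\ide-\iota\Phi=dh+hd$ on $\ker\Phi$.

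The principal technical obstacle is precisely this homotopy construction: the distributive law can re-introduce $\mathfrak n_{1,1}$-vertices when $d$ pushes a fresh $\mathfrak n_{1,0}$ past an $\mathfrak l_2$, so the verification that $h$ is well-defined and satisfies the chain homotopy identity requires tracking signs and arities through repeated applications of the distributive law. Once $\Phi$ is shown to be a quasi-isomorphism of 2-collections, the fact that $\phi$ is an operadic iso follows because both source and target are generated as operads by the same three operations with the same relations; equivalently, the induced bijection on the generating 2-collections $E=(\tilde{\mathfrak l}_2,\tilde{\mathfrak n}_{0,2},\tilde{\mathfrak n}_{1,0})$ together with the matching of relations established in Step 1 identifies $H_*(H_0(\SC)^!)$ with $\Lambda_\cl\cal{OC}$ as operads.
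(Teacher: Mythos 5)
Your Step~1 (the operad morphism $\phi$ and the use of $d\mathfrak n_{1,1}$ to obtain centrality in homology) is exactly the paper's first step, and the idea of computing the homology of $H_0(\SC)^!=\LP\circ\cal F(\mathfrak n_{1,0})$ by retracting onto an explicit small model is in the right spirit. But the retraction you propose fails, and not merely for lack of sign-checking. Your projection $\Phi$ kills every normal form containing an $\mathfrak n_{1,1}$, and your section $\iota$ is supposed to land in $\mathfrak n_{1,1}$-free normal forms. Look at $H_0(\SC)^!(2,0;\op)$ in degree $-1$: it is spanned by $\kappa_2=\mathfrak n_{1,1}\circ^\op\mathfrak n_{1,0}$ and $\kappa_2\cdot(21)$, and \emph{every} nonzero element there contains $\mathfrak n_{1,1}$ in normal form, because $\mathfrak n_{1,0}\circ_1\mathfrak l_2$ is not a normal form --- the distributive law rewrites it precisely as $\kappa_2-\kappa_2\cdot(21)$. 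Since $d\kappa_2=d(\kappa_2\cdot(21))$ and the degree-$0$ part of $H_0(\SC)^!(2,0;\op)$ vanishes, the class of $\kappa_2-\kappa_2\cdot(21)$ is a nonzero homology class (it is $\phi(\tilde{\mathfrak n}_{1,0}\circ_1\tilde{\mathfrak l}_2)$), yet $\Phi$ annihilates it and $\iota$ is not even defined on $\tilde{\mathfrak n}_{1,0}\circ_1\tilde{\mathfrak l}_2$. So $\ide-\iota\Phi$ is not null-homotopic and no choice of $h$ can repair this; the correct small model must retain, for each Lie word, the corresponding alternating sum of $\kappa$'s, which unavoidably involves $\mathfrak n_{1,1}$. (Two smaller problems: your prescription for $h$ on an $(\mathfrak n_{1,1},\mathfrak n_{1,0})$ pair ``using the distributive law'' does not produce a map of degree $+1$, and the filtration by $\#\mathfrak n_{1,0}$ is announced but never used.)

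The paper sidesteps these combinatorics: it uses the multiplication $\mathfrak n_{0,2}$ to view $H_0(\SC)^!$ as an associative algebra in (open) $2$-collections, identifies it with the cobar construction $\Omega C$ of the coalgebra $C=T^+(X_\cl)\oplus T^+(X_\cl)\otimes X_\op$ whose cogenerators correspond to the $\gamma_k$ and $\kappa_k$, proves that $C$ is Koszul by Priddy's tensor-product criterion ($C=D\otimes M$ with $D$ the unshuffle coalgebra, whose Koszul dual is $S(s^{-1}\Lie(X_\cl))$, and $M$ cogenerated by a single element), and finally matches $C^{\ac}$ with $\Lambda_\cl\OC$ and checks $\psi\phi=\ide$ on Lie elements via the Eye relation. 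If you insist on an explicit homotopy, you would have to retract onto the sub-$2$-collection spanned by products of Lie-words-capped-by-$\mathfrak n_{1,0}$ (the image of $\Lie(X_\cl)$ under the Eye relation) and of open generators --- in effect reproving Priddy's theorem for this particular $C$ by hand.
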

\begin{proof} Let $\cal P$ denote the differential graded operad  $H_0(\SC)^!$.
The proof goes in several steps. The first step consists in proving that there is a map of operads
$\phi:\Lambda_\cl\cal{OC}\rightarrow H_*(\cal P)$  inducing a map of algebras in the category of $\{\cl,\op\}$-collections. In the second step, we identify the algebra $\cal P$ with the cobar construction of a coalgebra $C$ in  the category of $\{\cl,\op\}$-collections. The third step consists in proving that $C$ is a Koszul coalgebra, inducing that the algebra map
$\psi:\cal P=\Omega C\rightarrow C^{\ac}$ is a quasi-isomorphism, where $C^{\ac}$ is the Koszul dual algebra associated to $C$, following ideas of Priddy in  \cite{Priddy70}. In the fourth step we identify  $C^{\ac}$ with
 $\Lambda_c\cal{OC}$ and prove that $\psi\phi$ is the identity morphism. Consequently, $\phi$ is an isomorphism of algebras, thus an isomorphism of operads.
 
\medskip

\noindent{\sl First step}. Because $d\mathfrak l_2=d\mathfrak n_{0,2}=d\mathfrak n_{0,1}=0$ in $\cal P$, one has a well defined morphism of operads from $\cal F(\tilde{\mathfrak l}_2, \tilde{\mathfrak n}_{0,2},\tilde{\mathfrak n}_{0,1})\rightarrow H_*(\cal P)$. Because $d\mathfrak n_{1,1}=\mathfrak n_{0,2}\circ_1 \mathfrak n_{0,1}-\mathfrak n_{0,2}\cdot (21)\circ_1 \mathfrak n_{0,1}$, this morphism of operads is well defined on the quotient by the ideal of relations in $\Lambda_\cl\cal{OC}$, yielding an operad morphism $\phi:\Lambda_\cl\cal{OC}\rightarrow H_*(\cal P)$.
It is clear also that the closed part of $\phi$ is an isomorphism. So in the sequel we focus 
on the open part $\phi_\op$ of the morphism. We will prove that given any object $\alpha:(X,\op)\rightarrow \{\cl,\op\}$ in $\Fin_{\{\cl,\op\}}$, the map $\phi_{(X,\op)}$ is an isomorphism. Let us denote the triple $(X,\op,\alpha)$ by $(I,J)$ where $I=\alpha^{-1}(\cl),J=\alpha^{-1}(\op)$. Contravariant functors from the full subcategory of $\Fin_{\{\cl,\op\}}$ generated by these objects will be called {\sl open $2$-collections}.

\medskip

\noindent{\sl Algebras in the category of open $2$-collections}.
The category of open $2$-collections is endowed with a coproduct $(F\oplus G)(I,J)=F(I,J)\oplus G(I,J)$ and a symmetric tensor product
$$(F\otimes G)(I,J)=\bigoplus_{I_1\sqcup I_2=I,\atop{J_1\sqcup J_2=J}} F(I_1,J_1)\otimes G(I_2,J_2)$$
with the unit $U(I,J)=\kfield$ if $I=J=\emptyset$ and is $0$ elsewhere. This tensor product is bilinear with respect to the coproduct.
Consequently  it makes sense to consider associative algebras, commutative algebras, coassociative coalgebras in this category. In the sequel we will use the terminology associative algebras, coassociative coalgebras for associative algebras and coalgebras in the category of open $2$-collections.
Furthermore given an open $2$-collection $V$ one can define an open 
$2$-collection $T(V)$ and $T^+(V)$ as
$T(V)=\oplus_{n\geq 1} V^{\otimes n}$ and $T^+(V)=U\oplus T(V)$. The open $2$-collection $T(V)$ is endowed with the concatenation product so that it becomes the free associative algebra generated by $V$.

Because 
$\tilde{\mathfrak n}_{0,2}$ and $\mathfrak n_{0,2}$ are  associative elements in $\Lambda_\cl\cal{OC}$ and  $\cal P$, those operads are associative algebras in the category of $2$-collections, so is $H_*(\cal P)$. The morphism $\phi$ is then a morphism of associative algebras.

\medskip

\noindent{\sl Second step}. Recall from proposition \ref{P:distrlaw}  that $\cal P=\LP\circ \cal F(\mathfrak n_{1,0})$.  Let $\mu_n\in\LP(0,n;\op)$ denote the $(n-1)$-th composite of $\mathfrak n_{0,2}$ (no matter the way we compose since $\mathfrak n_{0,2}$ is an associative operation). Let $\gamma_k\in\LP(k,1;\op)$ be defined by induction as 
$$\begin{cases}\gamma_0=1_{\op;\op}\\
 \gamma_{k+1}=\gamma_k\circ^\op \mathfrak n_{1,1}.  
  \end{cases}$$ 
Recall from lemma \ref{L:freeLP} that the open part of the free Leibniz pair generated by a pair $V=(V_\cl,V_\op)$ is $\LP(V)_\op=T(T^+(V_\cl)\otimes V_\op)$. Consequently,
any element in $\LP(I,J;\op)$ writes $\mu_n(\gamma_{l_1},\ldots,\gamma_{l_n})\cdot(\sigma,\tau)$ where $n=|J|$, $l_1+\ldots+l_n=|I|$, and $\sigma$ and $\tau$ are bijections between $I$ and $\{1,\ldots,|I|\}$ and between $J$ and $\{1,\ldots,n\}$ respectively. Let 
$$\kappa_{k+1}=\gamma_k\circ^\op \mathfrak n_{0,1}\in \cal P(k+1,0;\op).$$
Then any element in $\cal P(I,J;\op)$ writes 
$$\mu_n(\epsilon_{l_1},\ldots,\epsilon_{l_n})\cdot(\sigma,\tau),$$ 
where $\epsilon_k\in \{\gamma_k,\kappa_k\}$ and $|J|=|\{r|\epsilon_{l_r}=\gamma_{l_r}\}|$ and 
$|I|=l_1+\ldots +l_n$. We recall that $\gamma_k$ and $\mu_n$ are of degree $0$ while $\kappa_k$ is of degree -1.

Let $X_\cl$ be the $2$-collection defined by $X_\cl(I,J)=\kfield$ in degree $0$, if $|I|=1, |J|=0$ and $0$ elsewhere. 
Let
$X_\op$ be the $2$-collection defined by $X_\op(I,J)=\kfield$ of degree 1 if $|I|=0, |J|=1$ and $0$ elsewhere.

With this notation, the algebra $\cal P$ is the free associative algebra generated by the $2$-collection 
$s^{-1}\overline{C}$ with
$$C=(T^+(X_\cl)\oplus T^+(X_\cl)\otimes X_\op).$$  
Recall that the differential on $\cal P$ is a derivation of the operad $\cal P$. Consequently it is a derivation of algebras:

$$d(\mu_n(\epsilon_{l_1},\ldots,\epsilon_{l_n}))=\sum_{i=1}^n (-1)^{r_i} \mu_n(\epsilon_{l_1},\ldots,d\epsilon_{l_i},\ldots,\epsilon_{l_n}),$$
where $r_i$ is the sum of the degrees of $\epsilon_{l_k}$ for $k<i$.
Let us describe $d\gamma_n$ and $d\kappa_n$.

Let $I=\{1,\ldots,n\}$. An element in $T(X_\cl)(I,\emptyset)$ is uniquely determined by a sequence
$i_{[n]}=(i_1,\ldots,i_n)$ of degree 0. One has  $s^{-1}i_{[n]}=\kappa_n\cdot\sigma$ in $\cal P(n,0;\op)$, where $\sigma$ is the permutation $j\mapsto i_j$.
Similarly, an element in $(T^+(X_\cl)\otimes X_\op)(I,\{x\},\op)$ writes $(i_{[n]};x)=(i_1,\ldots i_n; x)$ of degree $1$. One has
$s^{-1}(i_{[n]};x)=\gamma_n\cdot (\sigma,1)$ in  $\cal P(n,1;\op)$.
For a set $A=\{a_1<\ldots<a_k\}\subset\{1,\ldots,n\}$ we denote by $i_A$ the sequence
$(i_{a_1},\ldots, i_{a_k})$ and $(i_A;x)$ the sequence $(i_{a_1},\ldots, i_{a_k};x)$.
One has 

\begin{align}
\label{F1} d(s^{-1}i_{[n]})=&\bigoplus_{A\sqcup B=[n]\atop{A,B\not=\emptyset}} s^{-1} i_A\otimes s^{-1} i_B \\
\label{F2} d(s^{-1}(i_{[n]};x))=&\bigoplus_{A\sqcup B=[n]\atop{A\not=\emptyset}} s^{-1} i_A\otimes s^{-1} (i_B;x)-s^{-1} (i_B;x)\otimes s^{-1}i_A 
\end{align}

The proof is by induction on $n$. We assume that  $i_{[n]}=(1,\ldots,n)$. Consequently  $s^{-1}i_{[n]}=\kappa_n$ and $s^{-1}(i_{[n]};x)=\gamma_n$.

Assume $n=2$. One has $d\kappa_1=d\mathfrak n_{0,1}=0$, hence (\ref{F1}) is proved. One has $d\gamma_1=d\mathfrak n_{1,1}=\mathfrak n_{0,2}\circ_1 \mathfrak n_{0,1}-\mathfrak n_{0,2}\cdot(21)\circ_1 \mathfrak n_{0,1}$ which writes $ds^{-1}(i_{[1]};x)=i_{[1]}\otimes (\emptyset;x)-(\emptyset;x)\otimes i_{[1]}$, proving (\ref{F2}).

Let $n>2$. Assume we have proven the second relation for $n-1$. The relation 
$\gamma_n=\gamma_1\circ^\op \gamma_{n-1}$ and $\gamma_1\circ^\op \mathfrak n_{0,2}=\mathfrak n_{0,2}\circ_2 \gamma_1\cdot (213)+\mathfrak n_{0,2}\circ_1\gamma_1$ gives
\[\begin{aligned}
ds^{-1}(i_{[n]};x)=&d(s^{-1}(i_{[1]};x))\circ_x s^{-1}(i_{\{2,\ldots,n\}};x)+\\
&s^{-1}(i_{[1]};x)\circ_x \sum_{A\sqcup B=\{2,\ldots,n\}} s^{-1} i_A\otimes s^{-1} (i_B;x)-s^{-1} (i_B;x)\otimes s^{-1}i_A\\
=&s^{-1}(i_{[n]})\otimes s^{-1}(\emptyset;x)-s^{-1}(\emptyset;x)\otimes s^{-1}(i_{[n]};x)+\\
&\sum_{A\sqcup B=\{2,\ldots,n\}} s^{-1} i_{\{1\}\cup A}\otimes s^{-1} (i_B;x)+s^{-1} i_A\otimes s^{-1} (i_{\{1\}\cup B};x)-\\
&\sum_{A\sqcup B=\{2,\ldots,n\}}s^{-1} (i_{\{1\}\cup B};x)\otimes s^{-1}i_A-
s^{-1} (i_B;x)\otimes s^{-1}i_{\{1\}\cup A}\\
=& \bigoplus_{A\sqcup B=[n]\atop{A\not=\emptyset}} s^{-1} i_A\otimes s^{-1} (i_B;x)-s^{-1} (i_B;x)\otimes s^{-1}i_A,
\end{aligned}\]
which proves (\ref{F2}).
Using $\kappa_n=\gamma_n\circ^\op \kappa_1$ one gets (\ref{F1}).

As a corollary, because $d^2=0$, $C$ is a  coassociative coalgebra where the coproduct
is given by

$$\begin{array}{cccl}
\Delta : & C & \rightarrow & C\otimes C \\
& i_{[n]} & \mapsto & \bigoplus\limits_{A\sqcup B=[n]} i_A\otimes i_B \\
& (i_{[n]}; x) & \mapsto & \bigoplus\limits_{A\sqcup B=[n]} i_A\otimes (i_B;x)+(i_B;x)\otimes i_A
\end{array}$$
and $\cal P=\Omega C=(T(s^{-1}\bar{C}),d)$ where $d$ is the unique derivation that lifts the coproduct on $C$.

\medskip

\noindent{\it Third step: $C$ is a Koszul coalgebra}. From the definition of $\Delta$, one sees that $C=D\otimes M$ where $D=T(X_\cl)$ is the unshuffle coalgebra and $M=U\oplus X_\op$ is the coalgebra with coproduct $\Delta(1)=1\otimes 1$ and $\Delta(x)=1\otimes x+x\otimes 1$. Following S. Priddy in \cite{Priddy70}, if $D$ and $M$ are Koszul coalgebras, so is $C$ and $C^{\ac}=D^{\ac}\otimes M^{\ac}$. From C. Stover in \cite{Stover93},
$D$ is  the enveloping algebra of the free Lie algebra $\Lie(X_\cl)$ which is a free cocommutative coalgebra cogenerated by $\Lie(X_\cl)$. But this is a Koszul coalgebra and its Koszul dual is $S(s^{-1}\Lie(X_\cl))$. Hence $D$ is Koszul. Following the notation of  \cite[section 3]{LodVal}, if $C(V,R)$ denotes the quadratic coalgebra cogenerated by $V$ with corelation $R$ then $C(V,R)^{\ac}=A(s^{-1}V,s^{-2}R)$ is the associative algebra generated by $s^{-1}V$ with relations $s^{-2}R$. Since $M=C(X_\op,0)$ then $M^{\ac}=A(s^{-1}X_\op,0)$ is the free associative algebra generated by $s^{-1}X_\op$, which is also Koszul. Consequently $C$ is Koszul and its dual Koszul algebra is

$$C^{\ac}(I,J)=S(s^{-1}\Lie(X_{\cl}))(I,\emptyset)\otimes T(s^{-1}X_\op)(\emptyset,J),$$

with the concatenation product.
The Koszul duality implies that the algebra map $\Omega C\rightarrow C^{\ac}$ is a quasi-isomorphism.

\medskip

\noindent{\it Fourth step}. Recall that the operad $\Lambda_\cl \OC$ is generated by 
$\tilde{\mathfrak n}_{0,2}$, $\tilde{\mathfrak l}_2$ of degrees $0$ and 
$\tilde{\mathfrak n}_{0,1}$ of degree $-1$ subject to the associativity relation for 
$\tilde{\mathfrak n}_{0,2}$, the Jacobi relation for  $\tilde{\mathfrak l}_2$, and the centrality relation 
for $\tilde{\mathfrak n}_{0,1}$. Any element in $\Lambda_\cl\OC([n],[l];\op)$ writes 
$\tilde\mu_p(l_{i_1},\ldots, l_{i_p})\otimes (\tilde\mu_{l}\cdot \tau)$ with

\begin{itemize}
\item
$l_{i_j}=\tilde{\mathfrak n}_{0,1}\circ \tilde{\mathfrak l}_{i_j}\in s^{-1}\Lie(I_j)$ where $\tilde{\mathfrak l}_{i_j}$ is an iterated composition of $\tilde{\mathfrak l}_2$ combined with permutations, which is called a Lie element in  $\Lambda_\cl \OC$;
\item $\sqcup_{1\leq j\leq k} I_j=[n]$;
\item $\tau\in S_l$.
\end{itemize}
Furthermore, the centrality of $\tilde{\mathfrak n}_{0,1}$ implies that  the product $\tilde\mu_p$ is invariant under the action of the symmetric group. The algebra structure follows and one has 
$\Lambda_\cl\OC=C^{\ac}$ as an algebra.
Consequently
$$\psi:H_0(\SC)^{\ac}=\Omega C\rightarrow C^{\ac}=\Lambda_\cl \OC$$
is a quasi-isomorphism of algebras and $H_*(\psi):H_*(\cal P)\rightarrow \Lambda_\cl \OC$ is an isomorphism.

The map $\phi:\Lambda_\cl \OC\rightarrow H_*(\cal P)$ is an algebra map. It suffices to prove that
$H_*(\psi)\circ\phi$ is the identity on a Lie element of type 
$\tilde{\mathfrak n}_{0,1}\circ\tilde{\mathfrak l}_{i_j}$. Because of the eye relation in $\cal P$ this Lie element is sent exactly to a Lie element in $\Lie(X_\cl)\subset \Omega s^{-1}S^{c}\Lie(X_\cl)$, hence to $\tilde{\mathfrak n}_{0,1}\circ\tilde{\mathfrak l}_{i_j}$ via $H_*(\psi)$.

Finally, $\phi$ is an isomorphism of algebras. \end{proof}

From the above proof one gets the following corollary.

\begin{cor}\label{C:isoalg} The operads $\OC_\infty$, $H_0(\SC)^!$ and $\Lambda_{\cl}\OC$ are multiplicative operads, hence are associative algebras in the category of  $2$-collections. The maps $\OC_\infty\rightarrow H_0(\SC)^!\rightarrow \Lambda_{\cl}\OC$ are quasi-isomorphisms of associative algebras.
 \end{cor}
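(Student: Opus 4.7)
The plan is to assemble ingredients extracted during the proof of Theorem \ref{T:homologycalculus}. The key observation is that any operad $\cal P$ equipped with a degree-zero associative element $\mathfrak n_{0,2}\in\cal P(0,2;\op)$ automatically gives rise to an associative algebra in the category of open 2-collections, with product
\[
\cal P(I_1,J_1)\otimes \cal P(I_2,J_2)\longrightarrow \cal P(I_1\sqcup I_2, J_1\sqcup J_2),\quad \alpha\otimes\beta\longmapsto \mathfrak n_{0,2}\circ^\op(\alpha,\beta),
\]
associativity of the product being a consequence of the associativity of $\mathfrak n_{0,2}$. Each of $\OC_\infty$, $H_0(\SC)^!$ and $\Lambda_\cl\OC$ carries such a distinguished element (namely $\mathfrak n_{0,2}$, respectively $\tilde{\mathfrak n}_{0,2}$), and so each is a multiplicative operad. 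This proves the first sentence of the statement.

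For the arrow $\OC_\infty\to H_0(\SC)^!$, I would invoke the explicit description of the quasi-isomorphism $\Psi$ given just after Theorem \ref{T:minmodOCHA}: $\Psi$ is the identity on the four generators $\mathfrak l_2,\mathfrak n_{0,2},\mathfrak n_{1,1},\mathfrak n_{1,0}$ and vanishes on all the other generators. In particular $\Psi(\mathfrak n_{0,2})=\mathfrak n_{0,2}$, so $\Psi$ is compatible with the algebra products defined above. Since $\Psi$ is already a quasi-isomorphism of operads, it is a quasi-isomorphism on the underlying 2-collections, and therefore a quasi-isomorphism of associative algebras in 2-collections.

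For the arrow $H_0(\SC)^!\to\Lambda_\cl\OC$, I would reuse the intermediate constructions from the proof of Theorem \ref{T:homologycalculus}. The second step there identifies $H_0(\SC)^!$ with the cobar construction $\Omega C$ of a coalgebra $C$ in the category of open 2-collections, where $\Omega C$ is by definition a free associative algebra $T(s^{-1}\bar C)$; the point to verify is that this free-algebra product is precisely the $\mathfrak n_{0,2}$-concatenation, which is already implicit in the formulas defining $\mu_n$, $\gamma_k$ and $\kappa_k$. The third and fourth steps produce a quasi-isomorphism of associative algebras $\psi:\Omega C\to C^{\ac}$ from the Koszulity of $C$ (\`a la Priddy and Stover), and they identify $C^{\ac}$ with $\Lambda_\cl\OC$ as an algebra. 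Composing these identifications yields the desired quasi-isomorphism of associative algebras.

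The only genuine verification, and hence the main obstacle, is to check that the multiplicative-operad product on $\Omega C$ really coincides with the concatenation product on $T(s^{-1}\bar C)$; once this is granted, compatibility of both maps with the products is tautological because they send $\mathfrak n_{0,2}$ to $\mathfrak n_{0,2}$ (resp.\ to $\tilde{\mathfrak n}_{0,2}$), and the quasi-isomorphism assertions reduce to the ones already proved in Theorems \ref{T:minmodOCHA} and \ref{T:homologycalculus}.
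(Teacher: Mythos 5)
Your proposal is correct and follows essentially the same route as the paper: the paper derives this corollary directly from the proof of Theorem \ref{T:homologycalculus}, where the associative-algebra structures induced by the associative elements $\mathfrak n_{0,2}$ and $\tilde{\mathfrak n}_{0,2}$, the identification $H_0(\SC)^!=\Omega C$ as a free associative algebra in open $2$-collections, and the quasi-isomorphism of algebras $\psi:\Omega C\to C^{\ac}=\Lambda_\cl\OC$ are all established, while the first arrow is the operad quasi-isomorphism $\Psi$ of Theorem \ref{T:minmodOCHA}, which preserves $\mathfrak n_{0,2}$ and hence the products. Your flagged verification (that the cobar product is the $\mathfrak n_{0,2}$-concatenation) is exactly what the paper's second step supplies.
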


As an application of what we have done, we can show that the OCHA operad $\cal{OC}_\infty$ is non-formal. In particular there is no hope for the second map defined in the previous corollary to be a quasi-isomorphism of operads. 

\begin{prop}\label{P:nonformality}
 The OCHA operad $\cal{OC}_\infty$ is non-formal.
\end{prop}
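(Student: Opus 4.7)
Since $\cal{OC}_\infty = \Omega((\Lambda H_0(\SC))^*)$ is quasi-free as a dg operad and hence cofibrant, any zig-zag of quasi-isomorphisms of dg operads connecting it to its homology $\Lambda_\cl\cal{OC}$ (equipped with zero differential) can be rectified to a direct morphism. It therefore suffices to rule out the existence of a dg operad quasi-isomorphism $\varphi\colon\cal{OC}_\infty\to\Lambda_\cl\cal{OC}$. Assuming such $\varphi$ exists, Theorem \ref{T:homologycalculus} implies that $\varphi$ must send $\mathfrak l_2$, $\mathfrak n_{0,2}$, $\mathfrak n_{1,0}$ to non-zero scalar multiples of $\tilde{\mathfrak l}_2$, $\tilde{\mathfrak n}_{0,2}$, $\tilde{\mathfrak n}_{1,0}$, which by rescaling we normalize to $1$.

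The first key observation is that $\Lambda_\cl\cal{OC}(1,1;\op)$ is one-dimensional and concentrated in degree $-1$, spanned by $\tilde{\mathfrak n}_{0,2}\circ_1 \tilde{\mathfrak n}_{1,0}$ (equal to $(\tilde{\mathfrak n}_{0,2}\circ_2 \tilde{\mathfrak n}_{1,0})\cdot(21)$ by centrality). Since $|\mathfrak n_{1,1}| = 0$, this forces $\varphi(\mathfrak n_{1,1})=0$. The second observation is that the differential of the OCHA generator $\mathfrak n_{2,0}$ has the shape
\[
d\mathfrak n_{2,0} \;=\; c\cdot \mathfrak n_{1,0}\circ_1 \mathfrak l_2 \;+\; (\text{terms involving } \mathfrak n_{1,1}),
\]
with non-zero coefficient $c=\pm 1$. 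Indeed, any two-vertex expansion of the corolla $\mathfrak n_{2,0}$ whose two labels are both OCHA generators falls into one of the following types: a closed internal edge with upper label $\mathfrak l_k$ and lower label $\mathfrak n_{3-k,0}$, for which the constraints $k\geq 2$ and $k\leq 2$ force $k=2$, giving the distinguished term $\pm\mathfrak n_{1,0}\circ_1\mathfrak l_2$; or an open internal edge, necessarily with upper $\mathfrak n_{1,0}$ and lower $\mathfrak n_{1,1}$, producing two terms of the form $\mathfrak n_{1,1}\circ_1^\op\mathfrak n_{1,0}$ related by the transposition of the closed leaves. Applying $\varphi$ and using $\varphi\circ d=0$ annihilates every $\mathfrak n_{1,1}$-term and leaves $c\cdot \tilde{\mathfrak n}_{1,0}\circ_1\tilde{\mathfrak l}_2=0$. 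It remains to observe that $\tilde{\mathfrak n}_{1,0}\circ_1 \tilde{\mathfrak l}_2 \neq 0$ in $\Lambda_\cl\cal{OC}(2,0;\op)$: none of the defining relations (associativity, Jacobi, centrality of $\tilde{\mathfrak n}_{1,0}$) pertains to it, and on a free $\Lambda_\cl\cal{OC}$-algebra $(L,A)$ it produces the non-trivial element $f([l_1,l_2])\in A$. This contradicts $c\neq 0$ and establishes the non-formality.

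The main obstacle is the combinatorial verification of the formula for $d\mathfrak n_{2,0}$: one must enumerate all vertex expansions of the corolla $\mathfrak n_{2,0}$, carefully discard those whose labels fall outside the OCHA generating set (such as $\mathfrak n_{0,1}$ or $\mathfrak n_{0,0}$), and determine the sign of the single surviving closed-edge term. The remaining ingredients — a degree count in $\Lambda_\cl\cal{OC}(1,1;\op)$ and a direct inspection of the presentation of $\Lambda_\cl\cal{OC}$ — are routine.
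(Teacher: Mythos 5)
Your proof is correct and follows essentially the same route as the paper's: degree reasons force any quasi-isomorphism $\varphi\colon\cal{OC}_\infty\to\Lambda_\cl\cal{OC}$ to kill $\mathfrak n_{1,1}$, while the "Eye" relation $d\mathfrak n_{2,0}=\pm\mathfrak n_{1,0}\circ_1\mathfrak l_2+(\mathfrak n_{1,1}\text{-terms})$ then forces $\varphi(\mathfrak n_{1,0}\circ_1\mathfrak l_2)=0$, contradicting the survival of $\tilde{\mathfrak n}_{1,0}\circ_1\tilde{\mathfrak l}_2$ in homology. The only difference is cosmetic: you verify the shape of $d\mathfrak n_{2,0}$ by enumerating vertex expansions, where the paper simply cites the Eye Law.
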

\begin{proof}
Since $\cal{OC}_\infty$ is a minimal operad whose homology is $\Lambda_\cl\cal{OC}$, 
its formality would imply the existence of an operad morphism 
$\cal{OC}_\infty \to \Lambda_c\cal{OC}$ inducing an isomorphism in homology. Note that $\mathfrak n_{1,1} \in \cal{OC}_\infty$ is an element 
of degree $0$. But all elements in $\Lambda_\cl\cal{OC}(1,1;\op)$ have degree $-1$ because they can only be obtained by composing 
$\tilde{\mathfrak n}_{1,0}$ and $\tilde{\mathfrak n}_{0,2}$. Hence, any quasi-isomorphism 
$\cal{OC}_\infty \to \cal{OC}$ would take $\mathfrak n_{1,1}$ to $0$. On the other hand, 
since $\mathfrak l_2$ and $\mathfrak n_{1,0}$ are generators of the homology of $\cal{OC}_\infty$ they cannot be taken to zero by any quasi-isomorphism. 
The ``Eye Law'' says that $\mathfrak n_{1,0} \circ_1 \mathfrak l_2 =\mathfrak n_{1,1} \circ_2\mathfrak n_{1,0} + (\mathfrak n_{1,1} \circ_2 \mathfrak n_{1,0})\cdot (21)$, a contradiction. 
\end{proof}

\bibliographystyle{amsplain}
\bibliography{bibliomars2011}
\bigskip
\end{document}